\newtheorem{theorem}{Theorem}[section]
\newtheorem{proposition}[theorem]{Proposition}
\newtheorem{lemma}[theorem]{Lemma}
\newtheorem{Claim}[theorem]{Claim}
\newtheorem{claim}[theorem]{Claim}
\newtheorem*{claim*}{Claim}
\newtheorem{corollary}[theorem]{Corollary}
\newtheorem{Main Conjecture}[theorem]{Main Conjecture}
\newtheorem{conjecture}[theorem]{Conjecture}
\newtheorem{problem}[theorem]{Problem}
\theoremstyle{definition}
\newtheorem{definition}[theorem]{Definition}
\theoremstyle{remark}
\newtheorem{example}[theorem]{Example}
\newtheorem{Remark}[theorem]{Remark}
\newtheorem{remark}[theorem]{Remark}
\theoremstyle{plain}
\newcommand{\lineb}[1]{\mathcal{L}_{#1}}
\newcommand{\cellsize}{12}
\newlength{\cellsz} \setlength{\cellsz}{\cellsize\unitlength}
\newsavebox{\cell}
\sbox{\cell}{\begin{picture}(\cellsize,\cellsize)
\put(0,0){\line(1,0){\cellsize}}
\put(0,0){\line(0,1){\cellsize}}
\put(\cellsize,0){\line(0,1){\cellsize}}
\put(0,\cellsize){\line(1,0){\cellsize}}
\end{picture}}
\newcommand\cellify[1]{\def\thearg{#1}\def\nothing{}%
\ifx\thearg\nothing
\vrule width0pt height\cellsz depth0pt\else
\hbox to 0pt{\usebox{\cell} \hss}\fi%
\vbox to \cellsz{
\vss
\hbox to \cellsz{\hss$#1$\hss}
\vss}}
\newcommand\tableau[1]{\vtop{\let\\\cr
\baselineskip -16000pt \lineskiplimit 16000pt \lineskip 0pt
\ialign{&\cellify{##}\cr#1\crcr}}}
\newcommand{\excise}[1]{}
\begin{document}
\pagestyle{plain}
\title{Coxeter combinatorics and spherical Schubert geometry}
\author{Reuven Hodges}
\address{Dept.~of Mathematics, University of Illinois at Urbana-Champaign, Urbana, IL 61801\newline
Current address: Department of Mathematics, UC San Diego, La Jolla, CA 92093}
\email{rhodges@ucsd.edu} 
\author{Alexander Yong}
\address{Dept.~of Mathematics, University of Illinois at Urbana-Champaign, Urbana, IL 61801} 
\email{ayong@illinois.edu}
\date{\today}
\maketitle

\begin{abstract}
For a finite Coxeter system and a subset of its diagram nodes, we define \emph{spherical elements} (a generalization of 
\emph{Coxeter elements}).
Conjecturally, for Weyl groups, spherical elements index Schubert varieties in a flag manifold $G/B$ that are spherical for the action of a Levi subgroup. We evidence the conjecture, employing the 
combinatorics of \emph{Demazure modules}, and work of
R.~Avdeev--A.~Petukhov, M.~Can--R.~Hodges, R.~Hodges--V.~Lakshmibai, P.~Karuppuchamy, P.~Magyar--J.~Weyman--A.~Zelevinsky, N.~Perrin, J.~Stembridge, and B.~Tenner. In type $A$, we establish connections  with the \emph{key polynomials} of A.~Lascoux--M.-P.~Sch\"utzenberger, multiplicity-freeness, and \emph{split-symmetry} in algebraic combinatorics. Thereby, we invoke theorems  of A.~Kohnert, V.~Reiner--M.~Shimozono, and C.~Ross--A.~Yong. 
\end{abstract}

\section{Introduction}\label{sec:1}

\subsection{Main definition}
Let $(W,S)$ be a finite Coxeter system, where $S=\{s_1,\ldots,s_r\}$ are minimal generators of the Coxeter group $W$. Biject $[r]:=\{1,2,\ldots,r\}$ with the nodes of the Coxeter 
diagram ${\mathcal G}$. To each $I\in 2^{[r]}$, let ${\mathcal G}_I$ be the induced subdiagram of ${\mathcal G}$.  Suppose 
\begin{equation}
\label{eqn:thedecompabc}
{\mathcal G}_I=\bigcup_{z=1}^m {\mathcal C}^{(z)}
\end{equation}
is the decomposition into connected components. Let $w_0^{(z)}$ be the long element of the parabolic subgroup
$W_{I^{(z)}}$ generated by $I^{(z)}=\{s_j: j\in {\mathcal C}^{(z)}\}$. 

Every $w\in W$ has a \emph{reduced expression} $w=s_{i_1}\cdots s_{i_k}$ where $k=\ell(w)$ is the \emph{Coxeter 
length} of $w$. Let ${\text{Red}}(w):={\text{Red}}_{(W,S)}(w)$ be the set of these expressions.  The \emph{left descents} of $w$ are 
\[J(w)=\{j \in [r]: \ell(s_j w)<\ell(w)\}.\]

\begin{definition}[$I$-spherical elements]\label{def:main}
Let $w\in W$ and fix $I\subseteq J(w)$. Then $w$ is \emph{$I$-spherical} if there exists 
$R=s_{i_1}\cdots s_{i_{\ell(w)}}\in {\text{Red}}(w)$ such that:
\begin{itemize}
\item[(S.1)] $\#\{t: i_t= j\}\leq 1$ for all $j \in [r]- I$, and
\item[(S.2)] $\#\{t: i_t \in {\mathcal C}^{(z)}\} \leq \ell(w_0^{(z)}) +\#\text{vertices}({\mathcal C}^{(z)})$ for $1\leq z\leq m$.
\end{itemize}
Such an $R$ is called an \emph{$I$-witness}. Call $w$ \emph{maximally spherical} if it is $J(w)$-spherical. 
\end{definition}

\begin{example}[Coxeter elements]
A \emph{Coxeter element} $c$ of $W$ is the product of all $s_i$'s (in some order). Trivially,
$c$ is $I$-spherical for any $I\subseteq J(c)$.
\end{example}

\begin{example}
The $E_8$ Coxeter diagram is
\dynkin[label, edge length=0.5cm]E8. Let 
\[R=s_2 s_3 s_4 s_2 s_3 s_4 s_5 s_4 s_2 s_3 s_1 s_4 s_5 s_6 s_7 s_6 s_8 s_7 s_6 \in {\text{Red}}(w).\]
Then $J(w)=\{2,3,4,5,7,8\}$. If $I=J(w)$ then
${\mathcal C}^{(1)}=$\dynkin[labels={3,4,2,5}, edge length=0.5cm]D4 and  ${\mathcal C}^{(2)}=$\dynkin[labels={7,8}, edge length=0.5cm]A2.

Here $W_{I^{(1)}}$ is the $D_4$ Coxeter group and
$w_0^{(1)}=s_3 s_2 s_4 s_3 s_2 s_4 s_5 s_4 s_3 s_2 s_4 s_5$
with $\ell(w_0^{(1)})=12$. Also, $W_{I^{(2)}}$ is the $A_2$ Coxeter group ${\mathfrak S}_3$,
$w_0^{(2)}=s_7 s_8 s_7$ and $\ell(w_0^{(2)})=3$.

$R$ is not a $J(w)$-witness for $w$; it fails (S.1) as $s_6$ appears thrice.
However,
\begin{align*}
R & = s_2 s_3 s_4 s_2 s_3 s_4 s_5 s_4 s_2 s_3 s_1 s_4 s_5 {\color{blue} s_6 s_7 s_6} s_8 s_7 s_6\\
& \equiv s_2 s_3 s_4 s_2 s_3 s_4 s_5 s_4 s_2 s_3 s_1 s_4 s_5 s_7 s_6 {\color{blue} s_7 s_8 s_7} s_6\\
& \equiv s_2 s_3 s_4 s_2 s_3 s_4 s_5 s_4 s_2 s_3 s_1 s_4 s_5 s_7 {\color{blue} s_6 s_8} s_7 {\color{blue} s_8 s_6}\\
& \equiv s_2 s_3 s_4 s_2 s_3 s_4 s_5 s_4 s_2 s_3 s_1 s_4 s_5 s_7 s_8 {\color{blue} s_6 s_7 s_6} s_8\\
& \equiv s_2 s_3 s_4 s_2 s_3 s_4 s_5 s_4 s_2 s_3 s_1 s_4 s_5 s_7 s_8 s_7 s_6 s_7 s_8.
\end{align*}
The latter expression is a $J(w)$-witness.
\end{example}

\begin{example}[$B_2,B_3$]
For $B_2$, all elements are $J(w)$-spherical (Proposition~\ref{prop:dihedral}). For $B_3$, the diagram is
\dynkin[label, edge length=0.5cm]B3, and $\#W(B_3)=2^33!=48$. The $8$ non-$J(w)$-spherical elements are:
$s_3 s_2 s_3 s_1 s_2 s_3, \  s_2 s_3 s_2 s_1 s_2 s_3, \ s_3 s_2 s_3 s_2 s_1 s_2 s_3, \ s_3 s_2 s_3 s_1 s_2 s_3 s_2,\ 
 s_2 s_3 s_2 s_1 s_2 s_3 s_2,\ s_3 s_2 s_3 s_2 s_1 s_2 s_3 s_2$, \\
    $s_2 s_3 s_2 s_1 s_2, \  s_3 s_2 s_3 s_1 s_2 s_3 s_2 s_1$.
\end{example}

\begin{example}[$F_4$]
The $F_4$ diagram is \dynkin[label, edge length=0.5cm]F4. Of the $1152$ Weyl group elements, $290$ are
$J(w)$-spherical. An example is $w=s_4 s_3 s_4 s_2 s_3 s_4 s_2 s_3 s_2 s_1 s_2 s_3 s_4$ (here $J(w)=\{2,3,4\}$). A non-example
is $w'=s_2  s_1  s_4 s_3 s_2 s_1 s_3 s_2 s_4 s_3 s_2 s_1$ ($J(w')=\{2,4\}$);  here $\#\text{Red}(w')=29$.
\end{example}

This paper will concentrate mainly on type $A_{n-1}$ \dynkin[labels={1,2, \ ,n\!-\! 1}, edge length=0.5cm]A{}. $W(A_{n-1}) \cong {\mathfrak S}_n$, the symmetric group on $\{1,2,\ldots,n\}$. Each $s_i$ is identified with the transposition $(i \ i+1)$.

\begin{example}\label{ex:S123456} All $w\in {\mathfrak S}_n$ are $J(w)$-spherical, if $n\leq 4$. In ${\mathfrak S}_5$ the non-examples are
\[24531, 25314, 25341, 34512, 34521, 35412, 35421, 42531, 45123, 45213, 45231,\] 
\[45312, 52314, 52341, 53124, 53142, 53412, 53421, 54123, 54213, 54231.\]
There are $320$ non-examples in ${\mathfrak S}_6$, and $3450$ in ${\mathfrak S}_7$ (the latter
computed by J.~Hu). We suspect that, for $n$ large, nearly all $w\in {\mathfrak S}_n$ are non-examples (Conjecture~\ref{conj:vanishing}). Notice $24531^{-1}=51423$ is not on the list. Being maximally spherical is not
inverse invariant.
\end{example}

\begin{example}[$321$-avoiding permutations] $w\in {\mathfrak S}_n$ is \emph{$321$-avoiding} if there does not exist $i<j<k$ such that
$w(i)>w(j)>w(k)$. Such $w$ are \emph{fully commutative}, \emph{i.e.}, no expression in ${\text{Red}}(w)$
contains $s_i s_{i+1} s_i$ nor $s_{i+1} s_i s_{i+1}$. Any two elements of ${\text{Red}}(w)$ can be obtained
from one another by a sequence of \emph{commutation relations} $s_i s_j \equiv s_j s_i$ where $|i-j|\geq 2$ (see, \emph{e.g.},
\cite[Proposition~2.2.15]{Manivel}). Hence,
for any $I \in 2^{J(w)}$, the property of being an $I$-witness is independent of the choice of $s_{i_1}\cdots s_{i_{\ell(w)}}\in
{\text{Red}}(w)$.
\end{example}

\subsection{Spherical elements and Schubert geometry}
 Let $G$ be a connected complex reductive algebraic group. Fix a choice of maximal torus $T$ and Borel
subgroup $B$ in $G$ with root system $\Phi$ and decomposition into positive and negative roots 
$\Phi=\Phi^+\cup \Phi^-$. Let $\Delta$ be the base of the root system. The finite Coxeter group of interest
is the \emph{Weyl group} of $G$, namely $W\cong N(T)/T$. Let ${\text{rank}}_{ss}(G)$ be the semisimple rank of $G$. Then $W$ is generated by $r={\text{rank}}_{ss}(G)$ many simple reflections $S=\{s_{1},\ldots, s_r\}$, where $1,2,\ldots,r$ is some indexing of $\Delta$.

This paper builds on and extends 
earlier work of, \emph{e.g.}, P.~Magyar--J.~Weyman--A.~Zelevinsky \cite{MWZ}, J.~Stembridge \cite{Stem:Weyl}, 
P.~Karuppuchamy \cite{Karupp}, as well as work of the first author with
V.~Lakshmibai \cite{Hodges.Lakshmibai, Hodges.Lakshmibai:II} and with M.~Can \cite{Can.Hodges:2020}. It combines  study
of two topics of combinatorial algebraic geometry:

\begin{itemize}
\item[(A)] The \emph{generalized flag variety} is $G/B$. The \emph{Schubert varieties} are the $B$-orbit closures 
$X_w=\overline{BwB/B}$ where $w\in W$. Schubert varieties are well-studied in
algebraic combinatorics, representation theory and algebraic geometry; see, \emph{e.g.}, \cite{Fulton, BL00}.
\item[(B)] A variety $X$ is \emph{$H$-spherical} for the action of a complex reductive algebraic group $H$ 
if $X$ is normal and it contains a open dense orbit of a Borel subgroup of $H$. Spherical varieties generalize toric varieties. 
Classifying spherical varieties is of significant interest; see, \emph{e.g.}, \cite{Brion.Luna.Vust, Luna}, and the survey by N.~Perrin \cite{Perrin}. 
\end{itemize}

Foundational work from the 1980s, by C.~DeConcini--V.~Lakshmibai \cite{DL}, as well as
S.~Ramanan--A.~Ramanathan \cite{RR85}, established that every Schubert variety is normal. Thus to be within  (B)'s scope,
it remains to introduce a reductive group $H$ acting on $X_w$ ($H=B$ being invalid, as $B$ is not reductive).

We study a natural choice of $H$ acting on $X_w$. Recall, for any parabolic subgroup $P$ of $G$, the \emph{Levi decomposition} is
\begin{equation}
\label{eqn:levidecomposition}
P=  L\ltimes R_u(P)
\end{equation}
where $L$ is a \emph{Levi subgroup} of $P$ and $R_u(P)$ is $P$'s unipotent radical. 
For each $I\in 2^{[r]}$ there is a standard parabolic $P_I\supset B$; let $L_I$ be the associated
standard Levi from (\ref{eqn:levidecomposition}) that contains~$T$. With respect to the left action of $G$ on $G/B$,  
\begin{equation}
\label{eqn:stablizeractionparabolic}
P_{J(w)}={\rm stab}_{G}(X_w);
\end{equation}
see \cite[Lemma~8.2.3]{BL00}.  For any 
\[I\subseteq J(w), L_I\leq P_I\leq P_{J(w)}.\] 
Hence by (\ref{eqn:stablizeractionparabolic}) each of the \emph{reductive} groups $H=L_I$ acts on $X_w$. 

\begin{definition}\label{def:geomain}
Let $I\subseteq J(w)$. $X_w\subseteq G/B$ is \emph{$L_I$-spherical} if $X_w$ has an open dense orbit of a Borel subgroup of $L_I$
under left translations. $X_w$ is \emph{maximally spherical}
if it is $L_{J(w)}$-spherical.
\end{definition}

\begin{center}
Which Schubert varieties $X_w$ are spherical for the action of $L_I$?
\end{center}

\begin{conjecture}
\label{conj:main}
Let $I\subseteq J(w)$. $X_w$ is ${L}_I$-spherical if any only if $w$ is $I$-spherical.
\end{conjecture}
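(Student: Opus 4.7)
The plan is to translate $L_I$-sphericity of $X_w$ into multiplicity-freeness of Demazure modules, and then compare the resulting combinatorics to conditions (S.1) and (S.2). Concretely, $X_w$ is $L_I$-spherical if and only if, for every dominant weight $\lambda$ that extends to a character of $P_{J(w)}$, the $L_I$-module $H^0(X_w, \mathcal{L}_\lambda)$ is multiplicity-free. By Demazure's theorem, $H^0(X_w, \mathcal{L}_\lambda)^*$ is the Demazure module $V_w(\lambda)$, whose character is the Demazure character $\kappa_w(\lambda)$. So the conjecture becomes: $\kappa_w(\lambda)$ is multiplicity-free as a sum of $L_I$-irreducible characters for all such $\lambda$ if and only if $w$ is $I$-spherical.

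For the direction from combinatorial to geometric, I would build a dense $B_{L_I}$-orbit in $X_w$ from an $I$-witness $R = s_{i_1}\cdots s_{i_{\ell(w)}}$ via the Bott--Samelson--Demazure--Hansen resolution $\pi\colon Z_R \to X_w$. Condition (S.1) is tailored so that each generator outside $I$ contributes an independent unipotent direction through $\pi$, since $L_I$ only supplies natural one-parameter subgroups for indices in $I$. Condition (S.2), with bound $\ell(w_0^{(z)}) + \#\text{vertices}({\mathcal C}^{(z)})$, encodes the dimension of the full flag variety of the Levi $L_{I^{(z)}}$ (namely $\ell(w_0^{(z)})$) together with the torus contribution counted by the vertices. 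Summing these contributions over components, a generic point in the image of $\pi$ should admit a $B_{L_I}$-orbit of full dimension $\ell(w) = \dim X_w$.

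For the reverse direction, the main obstacle is to extract a reduced expression satisfying (S.1) and (S.2) from the multiplicity-freeness of $\kappa_w(\lambda)$ as an $L_I$-character. In type $A$, I would exploit the combinatorics of key polynomials: via Kohnert diagrams or the Reiner--Shimozono coplactic model, violations of (S.1) or (S.2) across all reduced expressions should produce a weight appearing with multiplicity at least $2$ in the $L_I$-restriction of $\kappa_w(\lambda)$ for some $\lambda$, possibly by invoking the split-symmetry failure detected by Ross--Yong. Outside type $A$, I would proceed case-by-case, using the classifications of Magyar--Weyman--Zelevinsky, Stembridge, Karuppuchamy, Hodges--Lakshmibai, and Can--Hodges to cover known families (Coxeter elements, $321$-avoiding elements, small cominuscule cases) and to close out small ranks, consistent with the $B_3$ and $F_4$ examples. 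The hard part will be ruling out subtle obstructions beyond (S.1) and (S.2) uniformly across types; a Demazure-theoretic proof would be cleanest, but a full resolution is likely to demand substantial type-by-type work, which is why the paper advances only as a conjecture with evidence.
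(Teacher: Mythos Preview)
This statement is presented in the paper as a \emph{conjecture}, not a theorem; the paper supplies no proof of it in general. What the paper does prove are special cases (the toric case $I=\emptyset$ via Karuppuchamy, the long element $w_0$ via Theorem~\ref{prop:agreewithMWZ}, rank two via Theorem~\ref{prop:ranktwo}, and bigrassmannian permutations via Theorem~\ref{thm:bigrassmannian}), together with structural consistency checks (Propositions~\ref{prop:godown}, \ref{prop:monotone}, \ref{prop:twopieces}, \ref{prop:Dynkininjection}). So there is no ``paper's own proof'' against which to compare your attempt.

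Your proposal is accordingly not a proof but a strategy sketch, and you acknowledge as much in your final sentence. The framework you outline is in fact precisely the one the paper develops: your translation of $L_I$-sphericity into multiplicity-freeness of Demazure modules is the content of Theorem~\ref{thm:fundamentalRelationship}, and your suggestion to detect multiplicity via key-polynomial combinatorics is exactly what Sections~\ref{sec:4} and~\ref{sec:bigrassmannianpf} pursue (invoking Kohnert's rule and the Ross--Yong splitting formula). Your dimension-count heuristic for (S.2), reading $\ell(w_0^{(z)})+\#\text{vertices}({\mathcal C}^{(z)})$ as the Borel dimension of the Levi factor, matches the paper's remark immediately following the conjecture. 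Where you go beyond the paper is in proposing a Bott--Samelson construction of a dense $B_{L_I}$-orbit from an $I$-witness; the paper does not attempt this direction in general, and your sketch leaves the key step (that the resulting orbit is actually dense) unjustified. For the reverse direction, your plan to extract a reduced expression from multiplicity-freeness is likewise only programmatic: the paper does not know how to do this either, which is exactly why Conjecture~\ref{conj:main} remains open.
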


Condition (S.2) has the following Lie theoretic origin: if $G$ is semisimple and $B$ is a Borel subgroup, then
$\dim B=\ell(w_0)+{\text{rank}}(G)$. However, Conjecture~\ref{conj:main} predicts that being $L_I$-spherical only depends on the
Coxeter data. In particular, this suggests the sphericality classification is the same for 
$SO_{2n+1}/B$ vs.  $Sp_{2n}/B$.
 
To summarize earlier work, it seems nontrivial to certify sphericality of $X_w$, even in specific instances.  A certificate 
that $X_w$ is \emph{not} $I$-spherical is implicit in \cite{Perrin}. We expound upon it using research from algebraic combinatorics
(see Theorem~\ref{thm:fundamentalRelationship}). 

\begin{example}\label{exa:CanHodges}
M.~Can and the first author \cite[Theorems~6.2, 6.3]{Can.Hodges:2020} proved  that all Schubert varieties
in $SL_3/B$ and $SL_4/B$ are maximally spherical. This is consistent with Example~\ref{ex:S123456}. The methods of Section~\ref{sec:4} allow one to verify that the non-spherical cases shown in ${\mathfrak S}_5$ (and those alluded to in ${\mathfrak S}_6$) are 
indeed geometrically non-spherical.
\end{example}

\begin{example}[Toric Schubert varieties]\label{exa:toricJune17}
(S.1) was inspired by P.~Karuppuchamy's~\cite{Karupp}. In \emph{ibid.}, the author classified when 
$X_w$ is \emph{toric} with respect to $T$, that is, $X_w$ contains a dense orbit of $T$. Identically, this is classifying $L_{\emptyset}$-spherical $X_w$.  Indeed when $I=\emptyset$, (S.2) is a vacuous condition, and ``$X_w$ is toric $\iff$ (S.1)'' 
is precisely his classification. Earlier, B.~Tenner \cite{Tenner} proved (without reference to toric Schubert
geometry) that $w$ satisfies (S.1)
if and only if $w$ avoids $321$ and $3412$. See Theorem~\ref{thm:toric} and the discussion thereafter.
\end{example}

Recently, the first author and V.~Lakshmibai \cite{Hodges.Lakshmibai:II}  characterized spherical Schubert varieties in the Grassmannian ${\sf Gr}_k({\mathbb C}^n)$. This implies some necessary conditions for a Schubert variety in the flag variety to be spherical. 

Since this work was submitted, Y.~Gao and the authors have proved Conjecture~\ref{conj:main} for type $A$
\cite{Gao.Hodges.Yong}.

\subsection{Summary of the remainder of this paper}
In Section~\ref{sec:2}, we describe some basic properties of Definition~\ref{def:main}. These are used to confirm agreement of Conjecture~\ref{conj:main} in other examples, as well as with geometric properties of Definition~\ref{def:geomain}. Our initial
result is
\begin{itemize}
\item[(I)] Theorem~\ref{prop:agreewithMWZ}, a characterization of when $w_0\in W$ is $I$-spherical. This is connected to \cite{MWZ} and \cite{Stem:Weyl}, supplying some
general-type evidence for Conjecture~\ref{conj:main}.
\end{itemize}
We characterize maximally spherical elements of dihedral groups (Proposition~\ref{prop:dihedral}). This result and (I) are  used to prove:
\begin{itemize}
\item[(II)] Conjecture~\ref{conj:main} holds for rank two simple cases (Theorem~\ref{prop:ranktwo}).
\end{itemize}

In Section~\ref{sec:3}, we turn to $G=GL_n$. We state
\begin{itemize}
\item[(III)] Theorem~\ref{thm:bigrassmannian}, which confirms Conjecture~\ref{conj:main}  for the class of \emph{bigrassmannian}
permutations introduced by A.~Lascoux--M.-P.~Sch\"utzenberger \cite{LS:treillis}. 
\end{itemize}
\begin{itemize}
\item[(IV)] Conjecture~\ref{conj:pattern}, which suggests
Definition~\ref{def:main} is a \emph{pattern avoidance property}. (Since this work was submitted, this
has been proved by C.~Gaetz \cite{Gaetz}, using the aforementioned results of \cite{Gao.Hodges.Yong}.)
\end{itemize}

Section~\ref{sec:4} offers a novel perspective on the sphericality problem in terms  
of  the algebraic framework of \emph{split-symmetric} polynomial theory. The latter interpolates
between symmetric polynomial theory and \emph{asymmetric} polynomial theory.\footnote{Borrowing the terminology of \cite{Pechenik.Searles}.} Within this viewpoint, we discuss a unified notion of \emph{multiplicity-free} problems, and contribute to the subject of \emph{key polynomials}. We present
\begin{itemize}
\item[(V)] Theorem~\ref{thm:mfKey}, which characterizes multiplicity-free key polynomials. This supports some sphericality ideas we propose. 
\end{itemize}
The proof of this result is found in the companion paper \cite{Hodges.Yong}, where we also derive a multiplicity-free
result about the \emph{quasi-key polynomials} of S.~Assaf-D.~Searles \cite{Assaf.Searles}.

Using the fact that these polynomials are characters of \emph{Demazure modules}, as well as
 a result of N.~Perrin \cite{Perrin}, we derive:
 \begin{itemize}
 \item[(VI)] Theorem~\ref{thm:fundamentalRelationship}, which translates the geometric
 sphericality problem to one about \emph{split multiplicity-freeness} of \emph{infinitely} many key polynomials. 
 \end{itemize}
A consequence of (VI) is
\begin{itemize}
\item[(VII)] Theorem~\ref{cor:mfkeyweak}, which gives sufficient conditions, \emph{close}
to those of (V), for a key polynomial to be split multiplicity-free. In comparison to \cite{Hodges.Yong}, the geometric and representation-theoretic input of (VI) allows for a relatively short proof.
\end{itemize}
 Although (V) does not give, \emph{per se}, an algorithm to decide sphericality, we suggest
\begin{itemize} 
\item[(VIII)] Conjecture~\ref{conj:sphericalGeneric}, which asserts that 
checking the ``staircase'' key polynomial suffices. This conjecture reduces to a combinatorial question
about the split symmetry of key polynomials; see Conjecture~\ref{conj:weakerver}, Conjecture~\ref{conj:upone}
and Proposition~\ref{prop:allthesame}. (V) is a solution of this problem in the ``most-split'' 
case. 
\end{itemize}
We exhaustively verified that Conjecture~\ref{conj:main} is mutually consistent with Conjecture~\ref{conj:sphericalGeneric} for $n\leq 6$ (and many larger cases). 

Section~\ref{sec:bigrassmannianpf} is the culmination of the methods developed. We prove Theorem~\ref{thm:bigrassmannian} about bigrassmannian permutations. The argument uses Theorem~\ref{thm:fundamentalRelationship}, a combinatorial formula for splitting key polynomials due to C.~Ross and the second author \cite{Ross.Yong}, as well as an algebraic groups argument (Proposition~\ref{prop:dynkingeo}). 

\section{Basic properties and more examples}\label{sec:2}

Let $\leq$ denote the (strong) \emph{Bruhat order} on $W$. The following is a standard result 
(see, \emph{e.g.}, \cite[Theorem~2.2.2]{Bjorner.Brenti}):

\begin{theorem}[Subword property]\label{thm:subword}
Fix $s_{i_1}s_{i_2}\cdots s_{i_{\ell(v)}}\in {\text{Red}}(v)$.
$u\leq v$ if and only if there exists
$1\leq j_1<j_2<\ldots<j_{\ell(u)}$ such that $s_{i_{j_1}}s_{i_{j_2}}\cdots s_{i_{j_{\ell(u)}}}\in {\text{Red}}(u)$.
\end{theorem}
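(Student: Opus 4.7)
The plan is to prove both directions by induction on length, taking the Strong Exchange Condition and the resulting Lifting Property as standard Coxeter-theoretic black boxes. The version of the Lifting Property I need: if $u \leq v$ and $s$ is a simple reflection with $vs < v$, then $u \leq vs$ when $us > u$, and $us \leq vs$ when $us < u$.

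For the forward direction, I would induct on $\ell(v)$. The base case $\ell(v)=0$ is trivial. For the step, set $s = s_{i_{\ell(v)}}$ and $v' = vs$, so that $s_{i_1}\cdots s_{i_{\ell(v)-1}} \in {\text{Red}}(v')$. Applying Lifting to $u \leq v$ splits into two cases. If $us > u$, then $u \leq v'$ and induction realizes $u$ as a reduced subword of the shorter expression, which is automatically a subword of $R$. If $us < u$, then $us \leq v'$ and induction realizes $us$ as a reduced subword of $s_{i_1}\cdots s_{i_{\ell(v)-1}}$; appending the final factor $s_{i_{\ell(v)}}$ extends it to a reduced expression for $u$ (reduced because $\ell(u) = \ell(us)+1$) sitting as a subword of $R$.

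For the backward direction, I would induct on $\ell(v) - \ell(u)$. The base $\ell(u) = \ell(v)$ forces the subword to equal $R$, so $u = v$. In general, given a reduced subword $R' = s_{i_{j_1}}\cdots s_{i_{j_{\ell(u)}}}$ for $u$, pick any index $a$ not among the $j_t$. Writing $v' = s_{i_1}\cdots \widehat{s_{i_a}}\cdots s_{i_{\ell(v)}}$, one has $v' = v \cdot t$ for the reflection $t = (s_{i_{\ell(v)}}\cdots s_{i_{a+1}})\, s_{i_a}\, (s_{i_{a+1}}\cdots s_{i_{\ell(v)}})$, and $\ell(v') \leq \ell(v) - 1$; hence $v' < v$ from the reflection-chain definition of Bruhat order. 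The technical heart is then extracting a reduced expression for $v'$ that still contains $R'$ as a subword; this follows from the Word Property (any expression reduces to a reduced one via nil-moves and braid moves, and these can be arranged to preserve the positions of the already-reduced $R'$). Induction yields $u \leq v'$, and transitivity gives $u \leq v$. The main obstacle, and the only genuinely Coxeter-theoretic input, is this subword-tracking step; since the theorem is cited from Björner--Brenti~2.2.2, I would invoke their careful deployment of the Word Property and Strong Exchange rather than rederive the foundations.
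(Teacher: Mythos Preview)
The paper does not prove this theorem at all; it is quoted as a textbook fact with a citation to Bj\"orner--Brenti, Theorem~2.2.2. So there is no ``paper's own proof'' to compare against, and your sketch already goes further than the paper does.

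Your forward direction is the standard Lifting-Property induction and is correct.

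Your backward direction, however, rests on a step that is not justified and probably cannot be made to work as stated. You delete an arbitrary letter $s_{i_a}$ from $R$ to form an expression for $v'$; when that expression is not reduced, you claim the Word Property lets you reduce it via nil- and braid-moves while keeping the reduced subword $R'$ in place. But nil-moves delete adjacent equal letters, and a braid move rewrites a contiguous block; there is no reason either operation respects a fixed scattered subsequence of positions. You acknowledge this as the ``technical heart'' and then defer to the textbook, which amounts to not proving the direction at all.

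The fix is to use the \emph{same} induction you used in the forward direction. Induct on $\ell(v)$, set $s=s_{i_{\ell(v)}}$ and $v'=vs$, and case on whether the subword for $u$ uses the last position. If $j_{\ell(u)}<\ell(v)$, the subword already sits inside $s_{i_1}\cdots s_{i_{\ell(v)-1}}\in\text{Red}(v')$, so induction gives $u\le v'<v$. If $j_{\ell(u)}=\ell(v)$, then deleting the last letter gives a reduced subword of $\text{Red}(v')$ for $us$, so induction gives $us\le v'$; hence $us<v$, and since $s$ is a right descent of $v$ but not of $us$, Lifting yields $u=(us)s\le v$. This avoids the subword-tracking issue entirely and uses only the black boxes you already invoked.
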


\begin{proposition}\label{prop:subtrick}
Suppose $v\in W$ and $I\subseteq J(v)$. If there exists $u\in W$ such that $u\leq v$, and every element 
of ${\text{Red}}(u)$ fails (S.1) or (S.2) (with respect to $I$, ignoring whether or not $I\subseteq J(u)$), then $v$ is not $I$-spherical.
\end{proposition}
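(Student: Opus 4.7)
The plan is to argue the contrapositive via the Subword Property (Theorem~\ref{thm:subword}). Assume $v$ is $I$-spherical and let $R = s_{i_1}s_{i_2}\cdots s_{i_{\ell(v)}}\in {\text{Red}}(v)$ be an $I$-witness. Since $u \leq v$, Theorem~\ref{thm:subword} supplies indices $1\leq j_1 < j_2 < \cdots < j_{\ell(u)}$ such that
\[
R' := s_{i_{j_1}} s_{i_{j_2}} \cdots s_{i_{j_{\ell(u)}}} \in {\text{Red}}(u).
\]
The goal is to show $R'$ satisfies both (S.1) and (S.2) with respect to $I$, contradicting the hypothesis.

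The verification is a straightforward monotonicity argument: each letter appearing in $R'$ appears in $R$, so for any subset $T \subseteq [r]$ we have
\[
\#\{t: i_{j_t}\in T\} \;\leq\; \#\{t: i_t \in T\}.
\]
Applying this with $T=\{j\}$ for each $j \in [r]-I$ gives (S.1) for $R'$ from (S.1) for $R$. Applying it with $T = {\mathcal C}^{(z)}$ for $1\leq z \leq m$ gives (S.2) for $R'$ from (S.2) for $R$. Note that the connected components ${\mathcal C}^{(z)}$ and the quantities $\ell(w_0^{(z)})+\#\text{vertices}({\mathcal C}^{(z)})$ are determined entirely by the fixed set $I$, so they are unchanged when one transfers attention from $v$ to $u$; this is precisely the content of the parenthetical remark ``ignoring whether or not $I \subseteq J(u)$'' in the statement.

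There is no serious obstacle here; the only mild point is to observe that (S.1) and (S.2) are combinatorial conditions on a reduced expression that depend solely on \emph{multiset counts} of simple generators, hence are preserved under passage to subwords. Thus $R'\in{\text{Red}}(u)$ satisfies (S.1) and (S.2), contradicting the assumption that every element of ${\text{Red}}(u)$ fails one of these. Therefore $v$ cannot be $I$-spherical, completing the proof.
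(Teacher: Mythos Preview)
Your proof is correct and is essentially the same as the paper's: both argue by contrapositive, pick an $I$-witness $R$ for $v$, extract a reduced subword $R'\in{\text{Red}}(u)$ via the Subword Property, and derive a contradiction from the monotonicity of the counting conditions (S.1)--(S.2) under passage to subwords. You simply made explicit the monotonicity step that the paper compresses into the sentence ``Hence so must $R$.''
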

\begin{proof}
Suppose $v$ is $I$-spherical and 
$R=s_{i_1}\cdots s_{i_{\ell(v)}}\in {\text{Red}}(v)$ 
is an $I$-witness.
Then by Theorem~\ref{thm:subword}, some subexpression $R'$ of $R$ is in ${\text{Red}}(u)$. However, by hypothesis,
$R'$ fails (S.1) or (S.2) with respect to $I$. Hence so must $R$, a contradiction.
\end{proof}

If $W$ is a Weyl group, Bruhat order is the \emph{inclusion order} on Schubert varieties. That is, 
$X_{u}\subseteq X_v \iff u\leq v$.
In particular, $X_{w_0}=G/B$ and $X_{id}=B/B$ is the \emph{Schubert point}. Both of these Schubert varieties
are maximally spherical. In the former case, $H=G$ and in the latter case $H=T$. This is consistent with:

\begin{lemma}\label{lemma:theid}
Both $w=id,w_0$ are maximally spherical.
\end{lemma}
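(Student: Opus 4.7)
The plan is to verify both cases directly against Definition~\ref{def:main}, since each reduces to an essentially vacuous check once the left descent set is identified.

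For $w = id$: note that $\ell(id) = 0$, so the only element of ${\text{Red}}(id)$ is the empty word, and $J(id) = \emptyset$ because there is no $j$ with $\ell(s_j) < 0$. Taking $I = J(id) = \emptyset$, the induced diagram ${\mathcal G}_\emptyset$ contains no vertices, so $m = 0$ in (\ref{eqn:thedecompabc}) and condition (S.2) is vacuous; similarly (S.1) holds trivially for the empty word. Hence $id$ is maximally spherical.

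For $w = w_0$: the standard fact that $\ell(s_j w_0) = \ell(w_0)-1$ for every $j \in [r]$ gives $J(w_0) = [r]$, so I must show that $w_0$ is $[r]$-spherical. With $I = [r]$ one has ${\mathcal G}_I = {\mathcal G}$ and $[r] - I = \emptyset$, so (S.1) is again vacuous. For (S.2) I would pick any $R = s_{i_1}\cdots s_{i_{\ell(w_0)}} \in {\text{Red}}(w_0)$ and decompose ${\mathcal G} = \bigsqcup_{z=1}^{m} {\mathcal C}^{(z)}$. The key input is the well-known splitting $W = \prod_{z} W_{I^{(z)}}$ as an internal direct product, with $w_0 = \prod_z w_0^{(z)}$; this holds because simple reflections attached to distinct connected components of ${\mathcal G}$ commute. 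Consequently the restriction of $R$ to letters in ${\mathcal C}^{(z)}$ is itself a reduced expression for $w_0^{(z)}$, so
\[
\#\{t : i_t \in {\mathcal C}^{(z)}\} \;=\; \ell(w_0^{(z)}) \;\leq\; \ell(w_0^{(z)}) + \#\text{vertices}({\mathcal C}^{(z)}),
\]
and (S.2) is verified. Thus any reduced expression of $w_0$ serves as a $J(w_0)$-witness.

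There is no serious obstacle: the proof is purely a matter of matching the definition to the trivial cases $J(id) = \emptyset$ and $J(w_0) = [r]$. The only point that warrants explicit mention is the clean factorisation of $w_0$ along the connected components of the diagram, which is immediate from commutation of simple reflections in different components.
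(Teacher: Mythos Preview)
Your proof is correct and follows the same approach as the paper, which disposes of each case in a single clause (``(S.1) is trivial while (S.2) is vacuous'' for $id$; ``(S.1) is vacuous while (S.2) is trivial'' for $w_0$). You simply spell out the ``trivial'' verification of (S.2) for $w_0$ in the reducible case via the component factorisation $w_0 = \prod_z w_0^{(z)}$, which the paper leaves implicit.
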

\begin{proof}
If $w=id$, (S.1) is trivial while (S.2) is vacuous (since $J(w)=\emptyset$). If $w=w_0$ then (S.1) is vacuous (since $J(w)=[r]$) while (S.2) is trivial.
\end{proof}

Extending Lemma~\ref{lemma:theid},
we characterize $I$-sphericality of $w_0$. This is a nontrivial confirmation of Conjecture~\ref{conj:main}.

\begin{theorem}[The long element $w_0$]
\label{prop:agreewithMWZ}
Let $n\geq 4$. Suppose $I\subseteq [n-1]$ then $w_0\in {\mathfrak S}_n$ is $I$-spherical if and only if $I=[1,n-1], I=[2,n-1]$ or $I=[1,n-2]$. 

If $W$ is a finite, irreducible Weyl group not of type $A_{n-1}$, then $w_0\in W$ is $I$-spherical if and only if $I=S$.

Hence, Conjecture~\ref{conj:main} holds for all Levi subgroup actions on $G/B$ (where $G$ is simple).
\end{theorem}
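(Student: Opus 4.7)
The plan is to separate the combinatorial classification of $I$-sphericality of $w_0$ from the geometric identification of $L_I$-sphericality of $X_{w_0}=G/B$, unifying the two via a single dimension count. The common ingredient is a \emph{length bound}: any $R\in{\text{Red}}(w_0)$ satisfying (S.1) and (S.2) has
\[\ell(w_0)=\ell(R)\leq|J|+\sum_{z=1}^m\bigl(\ell(w_0^{(z)})+\#\text{vertices}({\mathcal C}^{(z)})\bigr)=r+\sum_z\ell(w_0^{(z)}),\]
using $J=[r]-I$ and $|J|+|I|=r$. Hence $w_0$ is $I$-spherical only if $|\Phi^+|-\sum_z|\Phi^+(W_{I^{(z)}})|\leq r$. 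Strikingly, this is precisely the necessary condition for $L_I$-sphericality of $G/B$: a Borel subgroup of $L_I$ has dimension $r+|\Phi^+_I|$, which must be at least $\dim G/B=|\Phi^+|$ for a dense orbit.

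For type $A_{n-1}$ I would enumerate the $I$'s meeting this inequality. A component of size $n_z$ contributes $\binom{n_z+1}{2}$, and the identity $\binom{n_1+n_2+1}{2}-\binom{n_1+1}{2}-\binom{n_2+1}{2}=n_1n_2$ (iterated) gives $\sum_z\binom{n_z+1}{2}\leq\binom{|I|+1}{2}$ with equality iff $I$ is connected. The necessary bound $\sum_z\binom{n_z+1}{2}\geq\binom{n-1}{2}$ then forces $|I|\geq n-2$ (for $n\geq 4$), and at $|I|=n-2$ additionally forces connectedness, isolating $I\in\{[n-1],[2,n-1],[1,n-2]\}$. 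I would supply explicit witnesses: $I=[n-1]$ is trivial; for $I=[2,n-1]$ the reduced word $(s_{n-1})(s_{n-2}s_{n-1})\cdots(s_1s_2\cdots s_{n-1})$ uses $s_i$ exactly $i$ times, so $s_1$ appears once (verifying (S.1)), and the identity $\sum_{i=2}^{n-1}i=\binom{n-1}{2}+(n-2)$ shows (S.2) holds with equality; the case $I=[1,n-2]$ is symmetric via the mirror expression $(s_1)(s_2s_1)\cdots(s_{n-1}\cdots s_1)$.

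For the other irreducible Weyl types I would verify by a finite case check on $B_n,D_n,E_{6,7,8},F_4,G_2$ that for every maximal proper $I=S-\{i\}$ the strict inequality $|\Phi^+|-|\Phi^+_I|>r$ holds, and hence a fortiori for smaller $I$. This leaves only $I=S$ combinatorially, which is trivially spherical. For the geometric statement: necessity in both settings follows from the dimension count of the first paragraph. Sufficiency when $I=S$ is trivial since $L_S=G$ acts transitively on $G/B$; for the two type $A$ cases with $L_I\cong GL_1\times GL_{n-1}$ acting on $GL_n/B$, the $L_I$-sphericality can be read off from either the multiple flag variety classification of \cite{MWZ} or Stembridge's \cite{Stem:Weyl} multiplicity-free branching list, via the standard dictionary ``$L$-spherical action on $G/B$'' $\iff$ ``$\mathbb{C}[G/B]$ is multiplicity-free as an $L$-module''.

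The main obstacle is the geometric sufficiency in the two nontrivial type $A$ cases, since the general spherical-pair classification is heavier than needed. A self-contained verification can proceed on the big cell $U^-\subset G/B$: the $B_{L_I}$-action on this affine $\binom{n}{2}$-space can be written explicitly as block-triangular conjugation on lower-unipotent matrices, after which one checks that the generic orbit has trivial stabilizer (making it open by the dimension match). This is the only step requiring Lie-theoretic input beyond the elementary length count.
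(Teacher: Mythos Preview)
Your approach is correct and genuinely different from the paper's. Your key observation is that (S.1) and (S.2) together impose the single inequality $\ell(w_0)\le r+\sum_z\ell(w_0^{(z)})$, which coincides with the Borel-dimension inequality $\dim B_{L_I}\ge\dim G/B$; this disposes of necessity in both the combinatorial and geometric senses simultaneously, via one root count followed by a type-by-type enumeration. The paper argues necessity structurally instead: in type $A$ it exhibits explicit subwords (shifted copies of $4321$, and a Coxeter-element trick for $I=[2,n-2]$) all of whose reduced expressions violate (S.1), then invokes Proposition~\ref{prop:subtrick}; outside type $A$ it uses a root-system argument of K.~Fan showing that for each $d$ the generator $s_d$ must appear at least twice in every reduced word of $w_0$. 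For the geometric half the paper routes through the Avdeev--Petukhov equivalence ($G/B$ is $L_I$-spherical iff $G/P_I\times G/B$ is $G$-spherical for the diagonal action) before citing \cite{MWZ,Stem:Weyl}. Your route is more economical and explains \emph{why} the numerology in (S.2) is what it is; the paper's route yields sharper information (each individual $s_d$ occurs $\ge 2$ times outside type $A$, not merely that an aggregate is too large). Three small points: your ``finite case check'' must still cover the infinite families $B_n,C_n,D_n$, though the verification there is uniform in $n$ and easy; the dictionary you quote should read ``$H^0(G/B,\mathcal L_\lambda)$ is multiplicity-free for every dominant $\lambda$'' rather than ``$\mathbb C[G/B]$'' (which is just $\mathbb C$, since $G/B$ is projective); and the $B_{L_I}$-action on the big cell $U^-$ is left translation followed by Bruhat projection, not block-triangular conjugation, though a direct stabilizer calculation along your lines is still workable.
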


\begin{proof}
We first prove the type $A_{n-1}$ statement. 

($\Rightarrow$) (By contrapositive) Assume $I$ is not one of the three listed cases. 

First suppose there exists $2\leq j\leq n-2$ such that $j\not \in I$.  For $1\leq i\leq n-3$, 
let $w_0^{\langle i\rangle}=s_i s_{i+1} s_{i+2} s_{i} s_{i+1} s_{i}\in {\mathfrak S}_n$. So $w_0^{\langle 1\rangle}=
{\color{blue} 4\ 3 \ 2 \ 1} \ 5 \ 6 \ \ldots \ n-2 \ n-1 \ n$, $w_0^{\langle 2\rangle}=1 \ {\color{blue} 5 \ 4 \ 3 \ 2} \ 6 \ 7 \ \ldots n-1 \ n$, \emph{etc.} That is,
each is a ``shifted copy'' of $4321\in {\mathfrak S}_4$. If $n=4$ one checks directly that 
$s_2$ appears twice in any reduced word for $w_0$ (there are sixteen such words). It follows that every $R\in {\text{Red}}(w_0^{\langle j-1\rangle})$
contains $s_j$ twice. Thus $R$ fails (S.1) with respect to $I$. Since $w_0^{\langle j-1\rangle}\leq w_0$, we may apply
Proposition~\ref{prop:subtrick} to conclude $w_0$ is not $I$-spherical.

The remaining possibility is that $I= [2,n-2]$.  Consider $R^c=s_1 s_2\cdots s_{n-1}\in \text{Red}(c)$,
the unique reduced expression for the Coxeter element $c$.
Since $c\leq w_0$, by Theorem~\ref{thm:subword}, for any
$R^{w_0}\in{\text{Red}}(w_0)$, $R^c$ appears as a subexpression of $R^{w_0}$.
In particular, there is an $s_1$ to the left of $s_{n-1}$ in $R^{w_0}$. Now, if
$R^{c'}=s_{n-1} s_{n-2}\cdots s_2 s_1\in \text{Red}(c')$ then by the same reasoning there is an $s_{n-1}$ left of $s_1$ in $R^{w_0}$. Hence either
$s_1$ appears at least twice or $s_{n-1}$ appears at least twice in $R^{w_0}$. Therefore $R^{w_0}$ cannot be an $I$-witness, as it fails (S.1). Thus $w_0$ cannot be $I$-spherical.

($\Leftarrow$) When $I=[n-1]=J(w_0)$, we apply Lemma~\ref{lemma:theid}.
Next we prove $w_0$ is $I$-spherical for $I=[1,n-2]$  (the remaining case is similar). The reduced
expression
\[(s_1 s_2 \cdots s_{n-1})(s_1 s_2\cdots s_{n-2})\cdots (s_1s_2 \cdots s_j) \cdots (s_1)\in {\text{Red}}(w_0)\]
uses $s_{n-1}$ exactly once, and so (S.1) holds. Here ${\mathcal G}_I$ is the $A_{n-2}$ Dynkin diagram. Now
(S.2) requires that ${n\choose 2}-1\leq {n-1\choose 2}+{n-2}$; in fact this holds with equality.

The argument for other types follows from a proof of K.~Fan~\cite{Fan.MO} posted in answer to a question asked on MathOverflow by J.~Humphreys. For the sake of completeness we explicate his argument below.
Let $\Delta=\{\alpha_1,\ldots,\alpha_r\}$ be the simple roots.
\begin{claim}
\label{lemma:whenNegative}
Let $I = [r] - \{d\}$ and $\alpha \in \Phi^+$ with $\alpha = \sum_{i=1}^r a_i \alpha_i$. Suppose $w_0 = w_1 s_d w_2$ for $w_1,w_2 \in W_I$. Then $w_1 s_d w_2$ is a reduced product, \emph{i.e.,} $\ell(w_1s_d w_2)=\ell(w_1)+\ell(s_d)+\ell(w_2)$. Further, if $a_d >0$ and $w_0(\alpha) = -\alpha$, then $w_2(\alpha) = \alpha_d$ and $s_d w_2(\alpha) = -\alpha_d$.
\end{claim}
\begin{proof}
We first show that  $w_1 s_d w_2$ is a reduced product. Since $w_2\in W_I$, $s_d w_2$ is a reduced product.
There exists a reduced expression $R=s_{i_1}\cdots s_{i_n}$ where $n=\ell(w_0)-\ell(s_d w_2)=\ell(w_0)-\ell(w_2)-1$ such that
$w_0=s_{i_1}\cdots s_{i_n} s_d w_2$. Since we assumed $w_0=w_1 s_d w_2$, we conclude that in fact
$R\in \text{Red}(w_1)$. Finally 
$\ell(w_0)=\ell(w_1 s_d w_2)=n+1+\ell(w_2)=\ell(w_1)+\ell(s_d)+\ell(w_2)$, 
as desired.

Let $\beta$ be a root. By definition, 
\begin{equation}
\label{eqn:reflectionformula}
s_{i}(\beta)=\beta-2\frac{(\alpha_i,\beta)}{(\alpha_i,\alpha_i)}\alpha_i
\end{equation} where 
$(\cdot,\cdot)$ is the Euclidean inner product on $V=\text{span}(\Phi)$. Pick $s_{i_1'}\cdots s_{i_{\ell(w_2)}'}\in
\text{Red}(w_2)$. Let $\alpha^{[0]}:=\alpha$ and $\alpha^{[f]}$ the result of applying the rightmost $f$-many reflections of
\[R'=(s_{i_1}\cdots s_{i_n})s_d(s_{i_1'}\cdots s_{i_{\ell(w_2)}'})\in \text{Red}(w_0)\] to $\alpha$ from right to left (\emph{e.g.}, 
$\alpha^{[1]}=s_{i_{\ell(w_2)}'}\alpha$ and $\alpha^{[2]}=s_{i_{\ell(w_2)-1}'}s_{i_{\ell(w_2)}}\alpha$, \emph{etc.}).
$\alpha^{[f]}\in \Phi$ since it is  a basic root-system fact that each reflection permutes $\Phi$.

Let $a_i^{[f]}$ be the coefficient of $\alpha_i$ in $\alpha^{[f]}$. By (\ref{eqn:reflectionformula}), if  $s_j$ is the $f$-th generator
of $R'$ from the right, then
\begin{equation}
\label{eqn:July7qqq}
a^{[f]}_i=a^{[f-1]}_i \text{\ for $i \in [r]-\{j\}$.}
\end{equation}
Since $s_d$ appears exactly once in $R'$, by (\ref{eqn:July7qqq}), the coefficient of $\alpha_d$
changes exactly once, and exactly at the step $f=\ell(w_2)+1$. This implies that first, 
$a_d^{[f-1]}=a_d>0$ and thus
$\alpha^{[\ell(w_2)]}\in \Phi^+$. Second, since $w_0(\alpha)=-\alpha$, it implies $a_d^{[f]}=-a_d<0$. However, since $a_i^{[f]}=a^{[f-1]}_i\geq 0$ for $i\neq d$, $\alpha^{[\ell(w_2)+1]}\in \Phi$ is  possible if and only if $a_i=0$ for $i\neq d$ and $a_d=1$ (recall, $a_d\alpha_d\in \Phi$ if and only if $a_d=\pm 1$, by the axioms of root systems). 
Hence $w_2(\alpha) = \alpha_d$ and $s_d w_2(\alpha) = -\alpha_d$.
\end{proof}

\begin{claim} 
\label{prop:w0singlesimple}
Suppose $W$ is a finite, irreducible Weyl group, not type $A$. Define $I = [r] - \{d\}$. Then $w_0\neq w_1 s_d w_2$ with $w_1,w_2 \in W_I$.
\end{claim}
\begin{proof}
Suppose otherwise. Let $\gamma=\sum_{i=1}^r \alpha_i\in \Phi^+$ and let $\theta$ be the highest root in $\Phi^+$. Outside
of type $A$, $\gamma\neq \theta$~\cite[Section 4.9, Table 1]{H90}. In the case of the exceptional groups, one checks by direct computation that $w_0(\gamma)=-\gamma$ and $w_0(\theta)=-\theta$. In types $B_n$ and $C_n$, as well as $D_n$ for even $n$, $w_0(\alpha)=-\alpha$ for all roots $\alpha$ ~\cite[Chap. VI, \S 4 no. 5,6,8]{B02}. In type $D_n$ for odd $n$, $w_0$ corresponds to the automorphism of the roots which interchanges $\alpha_r$ and $\alpha_{r-1}$, and then negates the result~\cite[Chap. VI, \S 4 no. 8]{B02}. Hence, $w_0(\gamma)=-\gamma$ and $w_0(\theta)=-\theta$. Thus in all cases, $\gamma$ and $\theta$ both satisfy the
hypotheses of Claim~\ref{lemma:whenNegative}. That claim says that $w_2(\gamma)=w_2(\theta)=\alpha_d$.
Hence $w_0(\gamma)=w_0(\theta)$, which is impossible.
\end{proof}

Concluding, if $I \subsetneq [r]$, there exists a $d \in [r] - I$. By Claim~\ref{prop:w0singlesimple}, $w_0$ fails (S.1) for $d$.

In~\cite[Lemma 5.4]{AP14}, R.~S.~Avdeev and A.~V.~Petukhov show that $G / P_J$ is $L_I$-spherical if and only if $G/P_I \times G / P_J$ is $G$-spherical (where the latter action is the diagonal $G$-action). These diagonal spherical actions are classified in type A by P.~Magyar-J.~Weyman-A.~Zelevinsky~\cite{MWZ}. In particular, \cite[Theorem 2.4]{MWZ} shows that $SL_n/B$ is $L_I$-spherical only for the $I$ in the statement of the theorem. The diagonal spherical actions in all other types were given by J.~Stembridge in \cite{STEM,Stem:Weyl}, whose work implies that if $G$ is simple and not of type $A$, then the only Levi that acts spherically on $G/B$ is $G$.
\end{proof}

In our proof of Theorem~\ref{thm:bigrassmannian} we will need the notions from this next example:

\begin{example}[The canonical reduced expression]\label{exa:canonicalred}
The diagram $D(w)$ of $w\in {\mathfrak S}_n$ is the subset of $[n]\times [n]$ given by 
\begin{equation}
\label{eqn:diagram}
D(w)=\{(i,j) \in [n]^2: j<w(i), i<w^{-1}(j)\}
\end{equation}
(in matrix coordinates). Fill the boxes of row $i$ from left to right by $s_i, s_{i+1}, s_{i+2},\ldots$.
Define $R^{\text{canonical}}(w)$ to be the \emph{canonical reduced expression} for $w$ obtained by reading this filling from right to left along rows and from top to bottom.
In ${\mathfrak S}_4$, $w$ is maximally spherical if and only if $R^{\text{canonical}}(w)$ is a $J(w)$-witness for $w$, unless 
$w=3421,4213, 4231$. For instance
$R^{\text{canonical}}(3421)=s_2 s_1 s_3 s_2 s_3$ fails (S.1) when $I=J(3421)=\{1,2\}$. However 
$R=s_1 s_2 s_1 s_3 s_2$ is a $\{1,2\}$-witness in this case.
\end{example}

\begin{proposition}[Dihedral groups]\label{prop:dihedral}
In type $I_2(n)$ and $n\geq 2$ (where $W$ is the the dihedral group of order $2n$), $w\in W$ is maximally spherical if and only if $\ell(w)\leq 3$ or $w=w_0$. 
\end{proposition}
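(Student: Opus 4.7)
The plan is to exploit the very restrictive structure of reduced expressions in $I_2(n)$. The only braid relation is $\underbrace{s_1 s_2 s_1\cdots}_n = \underbrace{s_2 s_1 s_2\cdots}_n$, so every $w\neq w_0$ has a \emph{unique} reduced expression, which is an alternating word in $s_1,s_2$; only $w_0$ has two reduced expressions, namely the two alternating words of length $n$. Consequently $J(w_0) = \{1,2\}$, while for $w\neq id, w_0$ one has $J(w) = \{i\}$, where $i$ is the unique first letter of the reduced expression. This reduces the problem to counting occurrences of $s_1$ and $s_2$ in alternating words.

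For the ($\Leftarrow$) direction I would verify three sub-cases. First, $w=id$ is trivially maximally spherical. Second, $w=w_0$ is maximally spherical because $I = J(w_0) = [r]$ makes (S.1) vacuous, while $\mathcal{G}_I$ is the full diagram with $w_0^{(1)} = w_0$, so (S.2) reads $n \leq n+2$. Third, if $1\leq \ell(w)\leq 3$, let $i$ be the first letter of the alternating reduced word, so $J(w)=\{i\}$ and necessarily $I=\{i\}$; the non-$i$ generator appears at most $\lfloor 3/2\rfloor = 1$ time, yielding (S.1), and the generator $s_i$ appears at most $\lceil 3/2\rceil = 2 = \ell(s_i)+1$ times, yielding (S.2) for the single-node diagram $\mathcal{G}_I$.

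For the ($\Rightarrow$) direction, suppose $w\neq w_0$ with $\ell(w)=k\geq 4$. Then $k<n$, the reduced expression is unique, and is the alternating word of length $k$ beginning with $s_i$ where $J(w)=\{i\}$. The other generator $s_j$ appears $\lfloor k/2\rfloor \geq 2$ times, and since $I\subseteq J(w)=\{i\}$ forces $j\notin I$, condition (S.1) fails for the only element of $\text{Red}(w)$; hence $w$ is not maximally spherical. The only ``obstacle'' is bookkeeping for small $n$: when $n\leq 3$ every $w$ already satisfies $\ell(w)\leq 3$ so the ``or $w=w_0$'' clause is redundant, and when $n=2$ the relation $s_1 s_2 = s_2 s_1$ identifies $w_0$ with both alternating words of length $2$—both situations are harmlessly covered by the case analysis above.
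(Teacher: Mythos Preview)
Your proof is correct and follows essentially the same approach as the paper's: both arguments exploit that every $w\neq w_0$ has a unique alternating reduced word, identify $J(w)$ from its first letter, and then verify or refute (S.1)--(S.2) by directly counting occurrences of each generator in that word. Your presentation is slightly more explicit in the counting (using $\lfloor k/2\rfloor$ and $\lceil k/2\rceil$) and in the small-$n$ bookkeeping, but there is no substantive difference.
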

\begin{proof}
The Coxeter diagram is \dynkin[Coxeter, gonality=n]I{}. $W$ is generated by $S=\{s_1,s_2\}$ with the relations
$s_1^2=s_2^2=id$ and $(s_1s_2)^n=id$. Each element of $W$ has a unique reduced word, except
$w_0$. Now $id,w_0$ are maximally spherical by Lemma~\ref{lemma:theid}. Thus suppose $w\neq id,w_0$.
If $w=s_2\cdots$ then $J(w)=\{2\}$. If $\ell(w)\leq 3$ then $w=s_2,s_2s_1$ or $s_2 s_1 s_2$, and it contains
at most one $s_1$, and hence (S.1) is satisfied. (S.2) says there are at most two $s_2$ in the reduced
word of $w$, which is true.  Thus $w$ is $J(w)$-spherical. However, if $4\leq \ell(w)< n$ then $w=s_2 s_1 s_2 s_1 \cdots$
and $w$ contains at least two $s_1$'s, violating (S.1). Thus such $w$ are not $J(w)$-spherical. Similarly, one 
argues the cases where $w=s_1 \cdots$. 
\end{proof}

\begin{corollary}\label{cor:B2G2}
Conjecture~\ref{conj:main} holds for types $B_2$ and $G_2$.
\end{corollary}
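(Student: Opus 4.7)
The plan is to combine Proposition~\ref{prop:dihedral}, Theorem~\ref{prop:agreewithMWZ}, and a small case analysis, exploiting that $W(B_2) = I_2(4)$ and $W(G_2) = I_2(6)$ are dihedral of orders $8$ and $12$. The verification reduces to a finite check over pairs $(w, I)$ with $I \subseteq J(w)$.

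First I would stratify. When $w = id$ or $(w,I) = (w_0, \{1,2\})$, Lemma~\ref{lemma:theid} handles both sides (geometrically because $X_{id}$ is a point and $X_{w_0} = G/B$ is $G$-homogeneous). For $w = w_0$ and $I \subsetneq \{1,2\}$, Theorem~\ref{prop:agreewithMWZ} gives combinatorial non-$I$-sphericality, and the results of Magyar--Weyman--Zelevinsky~\cite{MWZ} and Stembridge~\cite{Stem:Weyl} cited in its proof supply the matching geometric non-sphericality. For every other $w$, the dihedral structure forces a unique reduced expression beginning with some $s_i$, so $J(w) = \{i\}$ and one need only consider $I \in \{\emptyset, \{i\}\}$.

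For $I = \emptyset$, we have $L_\emptyset = T$, and $L_I$-sphericality of $X_w$ means $X_w$ is $T$-toric. The combinatorial condition (S.1) forces each of $s_1, s_2$ to appear at most once in the unique reduced word, so $\ell(w) \le 2$. The same bound is geometrically necessary since $\dim X_w \le \dim T = 2$; conversely, for $\ell(w) \le 2$ one verifies directly that $X_w$ is a point, a $\mathbb{P}^1$, or the closure of a $B$-orbit whose cell $U \cap w U^- w^{-1}\cdot wB/B$ is parameterized by two root subgroups for linearly independent roots, and in each case $T$ acts with a dense orbit.

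For $I = \{i\} = J(w)$ with $id \ne w \ne w_0$, Proposition~\ref{prop:dihedral} yields the combinatorial criterion $\ell(w) \le 3$. Since $\dim B_{L_{\{i\}}} = \dim T + 1 = 3$ in both $B_2$ and $G_2$, the bound $\dim X_w \le 3$ is also the geometric necessary condition; this rules out the $\ell(w) \in \{4, 5\}$ cases occurring only in $G_2$. The principal obstacle is the constructive direction, namely producing a dense $B_{L_{\{i\}}}$-orbit on $X_w$ for $\ell(w) \in \{1, 2, 3\}$. I would address this by explicit orbit analysis on the Bott--Samelson resolution $Z_R \to X_w$ attached to the unique reduced expression $R$, where the $B_{L_{\{i\}}}$-action is transparent in coordinates; the low dimensions make each case a direct computation checking that the generic stabilizer in $B_{L_{\{i\}}}$ is trivial.
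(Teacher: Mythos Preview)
Your proposal is correct in outline and takes a genuinely different route from the paper's proof. The paper handles the geometric side by citation: for $I=J(w)$ it invokes the explicit classification of Can--Hodges \cite[Sections~7,8]{Can.Hodges:2020} for $B_2$ and $G_2$, and for $I=\emptyset$ it invokes Karuppuchamy's toric classification \cite{Karupp} (Example~\ref{exa:toricJune17}); the case $\#I=1$, $I\subsetneq J(w)$ (forcing $w=w_0$) is dispatched via Theorem~\ref{prop:agreewithMWZ}, just as you do. By contrast, your non-sphericality arguments are more elementary and self-contained: the dimension bounds $\dim T=2$ and $\dim B_{L_{\{i\}}}=3$ immediately rule out $\ell(w)\geq 3$ (toric case) and $\ell(w)\geq 4$ (maximal case) without any external input, which is arguably the ``real'' reason these cases fail. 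The trade-off is that the constructive direction --- showing $X_w$ is $L_{\{i\}}$-spherical when $\ell(w)=3$ --- is only sketched in your proposal via Bott--Samelson coordinates, whereas the paper simply points to \cite{Can.Hodges:2020} where the computation is already done. Your plan for that step is viable (the relevant varieties are $3$-dimensional with a $3$-dimensional solvable group acting, and the check that the generic stabilizer is finite is indeed routine in root-subgroup coordinates), but it is work you would still need to write out.
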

\begin{proof}
First let us assume $I=J(w)$. The associated Coxeter groups are dihedral, and hence Proposition~\ref{prop:dihedral} applies. In
type $B_2$ (\dynkin[label, edge length=0.5cm]B2) that proposition states that all $w\in W$ are maximal-spherical.
In type $G_2$ \dynkin[label, edge length=0.5cm]G2, it says that only $id, s_1, s_2, s_1 s_2, s_2 s_1, s_1 s_2 s_1, s_2 s_1 s_2,w_0$ are maximal-spherical. This agrees with the geometric findings of M.~Can and the first author 
\cite[Sections~7,8]{Can.Hodges:2020}.

Thus we may assume $I\subsetneq J(w)$. If $\#I=1$ and $I\subsetneq \{1,2\}$ then $w=w_0$.
In $B_2$, $w_0=s_1 s_2 s_1 s_2 \in W(B_2)$ fails (S.2) and is not $I$-spherical. This agrees with
Theorem~\ref{prop:agreewithMWZ}. Similarly we handle $G_2$. Finally, if $I=\emptyset$, we may appeal to the
toric classification of P.~Karuppuchamy (see Example~\ref{exa:toricJune17}). 
\end{proof}
 
\begin{theorem}[Rank two]\label{prop:ranktwo}
Conjecture~\ref{conj:main} holds for $G/B$ where $G$ is simple of rank two.
\end{theorem}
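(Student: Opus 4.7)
The plan is to enumerate the semisimple rank-two Dynkin types---$A_1\times A_1$, $A_2$, $B_2(=C_2)$, and $G_2$---and verify Conjecture~\ref{conj:main} in each. Corollary~\ref{cor:B2G2} already settles the last two types, so the work reduces to $A_1\times A_1$ and $A_2$.

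For $A_1\times A_1$, I would argue that $G/B\cong\mathbb{P}^1\times\mathbb{P}^1$, and each of its four Schubert subvarieties is a product of Schubert varieties in $\mathbb{P}^1$ (each a point or $\mathbb{P}^1$). Hence each is $T$-toric and therefore $L_I$-spherical for every $I\subseteq J(w)$, since $T\subseteq L_I$ forces the dense $T$-orbit into a dense Borel-of-$L_I$ orbit. On the combinatorial side, $W$ has four elements, each with a unique reduced expression using each simple reflection at most once; both (S.1) and (S.2) are automatic for every $I$, so the two sides of Conjecture~\ref{conj:main} agree.

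For $A_2$, I would stratify by $I\subseteq J(w)$. When $I=J(w)$, Example~\ref{ex:S123456} shows every $w\in\mathfrak{S}_3$ is $J(w)$-spherical, and \cite{Can.Hodges:2020} (see Example~\ref{exa:CanHodges}) shows every $X_w\subset SL_3/B$ is maximally spherical. When $I=\emptyset$, Karuppuchamy's classification (Example~\ref{exa:toricJune17}) gives $X_w$ toric $\iff$ (S.1), which in $\mathfrak{S}_3$ fails only at $w_0=321$. Since $|J(w)|\le 2$ with equality only at $w=w_0$, the only intermediate pairs are $(w_0,\{1\})$ and $(w_0,\{2\})$; here I would check directly that $s_1s_2s_1$ is a $\{1\}$-witness and $s_2s_1s_2$ is a $\{2\}$-witness for $w_0$, and for the geometric side invoke \cite[Lemma~5.4]{AP14} to convert $L_{\{i\}}$-sphericality of $SL_3/B$ into diagonal $SL_3$-sphericality of $SL_3/P_{\{i\}}\times SL_3/B$, which holds by \cite{MWZ}.

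The main obstacle is the intermediate case at $w_0\in\mathfrak{S}_3$, since Theorem~\ref{prop:agreewithMWZ} was stated under the hypothesis $n\geq 4$ and so does not subsume $\mathfrak{S}_3$. I expect the cleanest route is the MWZ appeal just described; as a fallback, one can exhibit an open $L_{\{i\}}$-orbit on $SL_3/B$ directly by a short matrix computation, since $\dim L_{\{i\}}\cap B = 3 = \dim SL_3/B$ makes dense-orbit verification essentially a Jacobian-rank check at a generic point.
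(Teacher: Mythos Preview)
Your proposal is correct and follows essentially the same route as the paper: invoke Corollary~\ref{cor:B2G2} for $B_2$ and $G_2$, and for $A_2$ stratify by $I$, using Example~\ref{ex:S123456} together with \cite{Can.Hodges:2020} for $I=J(w)$, Karuppuchamy's toric classification for $I=\emptyset$, and \cite{MWZ} (via \cite{AP14}) for the two intermediate cases at $w_0$. Your version is in fact slightly more careful than the paper's own proof: you explicitly treat the $A_1\times A_1$ case (which the paper omits despite the ``semisimple'' hypothesis), and you correctly flag that Theorem~\ref{prop:agreewithMWZ} is stated only for $n\geq 4$ and route around it---the paper simply writes ``this agrees with \cite{MWZ}'' without comment.
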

\begin{proof}
The $B_2$ and $G_2$ cases are covered by Corollary~\ref{cor:B2G2}. 

For the root system $A_2$, first suppose $I=J(w)$. All elements of ${\mathfrak S}_3$ are maximally spherical 
(see Example~\ref{ex:S123456}). Now we apply the results of M.~Can and the first author (Example~\ref{exa:CanHodges}).
If $\#I=1$ and $I\subsetneq J(w)$ then $w=w_0$ and $w=s_1 s_2 s_1\equiv s_2 s_1 s_2$ is $I$-spherical. This agrees
with \cite{MWZ}. Finally if $I=\emptyset$ then we use the  toric classification of P.~Karuppuchamy (see Example~\ref{exa:toricJune17}). 
\end{proof}

We now record facts that infer one kind of sphericality from another. Consistency between the combinatorial
predictions and the geometry are checked.

\begin{proposition}
\label{prop:godown}
Fix $x,y\in W$ with $x\leq y$ and $I\subseteq J(x)\cap J(y)$. If $y$ is $I$-spherical, then $x$ is $I$-spherical.
\end{proposition}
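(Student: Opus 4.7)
The plan is to apply the Subword Property (Theorem~\ref{thm:subword}) and observe that both conditions (S.1) and (S.2) are monotone under taking subexpressions. Crucially, the data defining these conditions---the set $I$, the induced subdiagram $\mathcal{G}_I$, its connected components $\mathcal{C}^{(z)}$, and the bounds $\ell(w_0^{(z)}) + \#\text{vertices}(\mathcal{C}^{(z)})$---depend only on $I$, not on the element being tested for $I$-sphericality. This makes the comparison between witnesses for $y$ and for $x$ meaningful.

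First, since $y$ is $I$-spherical, there exists an $I$-witness $R^y = s_{i_1}\cdots s_{i_{\ell(y)}} \in \text{Red}(y)$ satisfying (S.1) and (S.2) with respect to $I$. By the Subword Property (Theorem~\ref{thm:subword}), the hypothesis $x \leq y$ implies the existence of indices $1 \leq j_1 < j_2 < \cdots < j_{\ell(x)}$ such that $R^x := s_{i_{j_1}} s_{i_{j_2}} \cdots s_{i_{j_{\ell(x)}}} \in \text{Red}(x)$.

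Next, I would verify that $R^x$ is an $I$-witness for $x$. For (S.1): for any $j \in [r] - I$, the number of occurrences of $s_j$ in $R^x$ is at most the number of occurrences in $R^y$, which is at most $1$ by assumption. For (S.2): for any connected component $\mathcal{C}^{(z)}$ of $\mathcal{G}_I$, the number of letters of $R^x$ lying in $\mathcal{C}^{(z)}$ is at most the number in $R^y$, which is bounded by $\ell(w_0^{(z)}) + \#\text{vertices}(\mathcal{C}^{(z)})$. Combined with the standing hypothesis $I \subseteq J(x)$, this shows $R^x$ is an $I$-witness for $x$, so $x$ is $I$-spherical.

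There is essentially no obstacle here; the only thing to be careful about is the conceptual point that both (S.1) and (S.2) are ``upper bound on occurrence'' conditions whose thresholds are functions of $I$ alone, which is why a direct subword comparison works. Note also that the hypothesis $I \subseteq J(x)$ cannot be dropped or deduced from the rest, since $I$-sphericality is only defined when $I \subseteq J(w)$.
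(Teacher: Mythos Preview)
Your proof is correct and is essentially the same argument as the paper's. The paper simply cites the contrapositive form, Proposition~\ref{prop:subtrick}, whose proof is exactly the Subword Property plus the observation that (S.1) and (S.2) are monotone under passing to subexpressions; you have written out that same argument directly rather than via contrapositive.
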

\begin{proof}
The contrapositive claim is Proposition~\ref{prop:subtrick}.
\end{proof}

Proposition~\ref{prop:godown} is consistent with geometry. A normal $H$-variety $Y$ is $H$-spherical if and only if there are finitely many $B_H$-orbits in $Y$ (here $B_H$ is a Borel subgroup of $H$)
  \cite[Theorem 2.1.2]{Perrin}. Now, suppose 
$X$ is a subvariety of $Y$, where $Y$ is $H$-spherical and $X$ is $H$-stable. Then $Y$ must have finitely many $B_H$-orbits, which implies $X$ must have finitely many $B_H$ orbits. Hence, $X$ is $H$-spherical as well. 
In our case, if $x\leq y$ and $I\subseteq J(x)\cap J(y)$ then $H=L_I$ acts
on $X=X_x$ and  $Y=X_y$.

\begin{proposition}[Monotonicity]
\label{prop:monotone}
Let $w\in W$ and 
suppose $I'\subset I\subseteq J(w)$. If $w$ is $I'$-spherical then it is $I$-spherical.
\end{proposition}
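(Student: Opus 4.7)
The plan is to show that the same reduced expression $R = s_{i_1}\cdots s_{i_{\ell(w)}}\in {\text{Red}}(w)$ that serves as an $I'$-witness also serves as an $I$-witness. Condition (S.1) is immediate: since $I'\subset I$ implies $[r]-I\subseteq [r]-I'$, any bound on $\#\{t: i_t = j\}$ for $j\in [r]-I'$ transfers verbatim to $j\in [r]-I$.

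The substantive part is (S.2), which requires bookkeeping because the connected-component decomposition changes as we enlarge $I'$ to $I$. Write ${\mathcal G}_I = \bigcup_{z=1}^m {\mathcal C}^{(z)}$. Fix a component ${\mathcal C}^{(z)}$, and partition its vertex set as $V_{I'}\sqcup V_{I-I'}$, where $V_{I'}$ consists of the nodes of ${\mathcal C}^{(z)}$ lying in $I'$ and $V_{I-I'}$ consists of those lying in $I-I'$. The induced subdiagram on $V_{I'}$ is the union of some connected components ${\mathcal D}^{(1)},\ldots,{\mathcal D}^{(p)}$ of ${\mathcal G}_{I'}$. Then I would estimate
\[
\#\{t: i_t\in {\mathcal C}^{(z)}\} \;=\; \sum_{k=1}^p \#\{t: i_t\in {\mathcal D}^{(k)}\} \;+\; \#\{t: i_t\in V_{I-I'}\}.
\]
Apply (S.2) for $I'$ to each ${\mathcal D}^{(k)}$ and apply (S.1) for $I'$ (valid since $V_{I-I'}\subseteq [r]-I'$) to conclude
\[
\#\{t: i_t\in {\mathcal C}^{(z)}\} \;\leq\; \sum_{k=1}^p \bigl(\ell(w_0^{{\mathcal D}^{(k)}}) + \#\text{vertices}({\mathcal D}^{(k)})\bigr) \;+\; \#V_{I-I'} \;=\; \sum_{k=1}^p \ell(w_0^{{\mathcal D}^{(k)}}) + \#\text{vertices}({\mathcal C}^{(z)}).
\]

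The final step is the inequality $\sum_{k=1}^p \ell(w_0^{{\mathcal D}^{(k)}})\leq \ell(w_0^{{\mathcal C}^{(z)}})$. This follows because $\prod_k W_{{\mathcal D}^{(k)}}$ is the standard parabolic subgroup of $W_{{\mathcal C}^{(z)}}$ generated by $V_{I'}\cap {\mathcal C}^{(z)}$, whose long element is the commuting product of the $w_0^{{\mathcal D}^{(k)}}$'s; its length equals the number of positive roots it inverts, which cannot exceed the corresponding count for $w_0^{{\mathcal C}^{(z)}}$. Combining these bounds gives (S.2) for ${\mathcal C}^{(z)}$ with respect to $I$, completing the proof.

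The argument is essentially routine once the decomposition is set up; the only place to be careful is ensuring that the vertices of $V_{I-I'}$ are governed by (S.1) of the $I'$-witness, which is what lets them contribute only their vertex count to the estimate.
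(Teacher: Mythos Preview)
Your proof is correct and follows essentially the same strategy as the paper: take the $I'$-witness $R$, note (S.1) is immediate, and for (S.2) decompose each component ${\mathcal C}^{(z)}$ of ${\mathcal G}_I$ into the components of ${\mathcal G}_{I'}$ it contains, then compare long-element lengths via the parabolic-subgroup inequality $\sum_k \ell(w_0^{{\mathcal D}^{(k)}})\le \ell(w_0^{{\mathcal C}^{(z)}})$. The paper phrases the (S.2) check contrapositively (assume a violating component, derive via Pigeonhole that some ${\mathcal D}^{(k)}$ violates (S.2) for $I'$), whereas you argue directly and, in particular, explicitly invoke (S.1) for the vertices in $V_{I-I'}$; this makes your accounting slightly cleaner, since the paper's Pigeonhole step tacitly needs that same bound on the $I{-}I'$ contributions.
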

\begin{proof}
Suppose $R=s_{r_1}\cdots s_{r_{\ell(w)}}\in {\text{Red}}(w)$ is an $I'$-witness. We show $R$ is an $I$-witness.
Trivially, $R$ satisfies (S.1) with respect to $I$. Let 
\[{\mathcal G}_{I'}=\bigcup_{z'=1}^{m'} {\overline{\mathcal C}}^{(z')} \text{ \ and \ ${\mathcal G}_{I}=\bigcup_{z=1}^m 
{{ {\mathcal C}}}^{(z)}$}\]
be the decomposition (\ref{eqn:thedecompabc}) for $I'$ and $I$, respectively. Suppose $z\in [m]$ is such that
\begin{equation}
\label{eqn:May24abc}
\#\{i_t: i_t \in {\mathcal C}^{(z)}\} > \ell(w_0^{(z)})+\#\text{vertices}({\mathcal C}^{(z)}).
\end{equation}
Let $z_1',z_2',\ldots, z_s' \in [m']$ be such that ${\overline{\mathcal C}}^{(z_j')}\subseteq {{{\mathcal C}}}^{(z)}$.
Let ${\overline w}_0^{(z_j')}$ be the longest element of the Coxeter group $W({\overline{\mathcal C}}^{(z_j')})$
associated to ${\overline{\mathcal C}}^{(z_j')}$, for $1\leq j\leq s$. Now, each  $W({\overline{\mathcal C}}^{(z_j')})$
is a parabolic subgroup of $W({{{\mathcal C}}}^{(z)})$ and \[\prod_{j=1}^s {\overline w}_0^{(z_j')}\leq w_0^{(z)}.\]
Thus, 
$\sum_{j=1}^{s}\ell({\overline w}_0^{(z_j')})\leq \ell(w_0^{(z)})$,
and hence
\begin{equation}
\label{eqn:May24abd}
\sum_{j=1}^{s}\left(\ell({\overline w}_0^{(z_j')})+\#\text{vertices}({\overline{\mathcal C}}^{(z_j')})\right)<\ell(w_0^{(z)})+
\#\text{vertices}({\overline{\mathcal C}}^{(z)}),
\end{equation}
Combining (\ref{eqn:May24abc}), (\ref{eqn:May24abd}) and the pigeonhole principle implies (S.2), with respect to $I'$, fails for some $z_j'$, a contradiction. Thus $R$ satisfies (S.2) with respect to $I$, and therefore $R$ is an $I$-witness.
\end{proof}

Proposition~\ref{prop:monotone} is consistent with the following (known) fact:

\begin{proposition}[Geometric monotonicity]\label{prop:geometricmono}
Suppose $w\in W$ and $I'\subseteq I\subseteq J(w)$. If  $X_w$ is $L_{I'}$-spherical, then 
$X_w$ is $L_I$-spherical.
\end{proposition}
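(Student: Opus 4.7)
The plan is to show directly from the definition that the dense orbit condition passes from $L_{I'}$ to $L_I$. Since each Schubert variety $X_w$ is normal \cite{DL, RR85}, $X_w$ is $L_I$-spherical exactly when it contains a dense orbit of some (equivalently, every) Borel subgroup of $L_I$.

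First, I would observe that $I'\subseteq I$ implies the containment of standard parabolics $P_{I'}\subseteq P_{I}$, and correspondingly the containment of the standard Levi subgroups $L_{I'}\subseteq L_I$ (both chosen to contain $T$). Indeed, $L_{I'}$ is generated by $T$ together with the root subgroups for roots in the span of $\{\alpha_i : i\in I'\}$, which is a subset of the analogous generating set for $L_I$.

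Next, I would fix compatible Borels. Let $B_{L_I}:=B\cap L_I$ be a Borel subgroup of $L_I$; then $B_{L_{I'}}:=B\cap L_{I'}=B_{L_I}\cap L_{I'}$ is a Borel subgroup of $L_{I'}$, and by construction $B_{L_{I'}}\subseteq B_{L_I}$.

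Now assume $X_w$ is $L_{I'}$-spherical, and let $\mathcal{O}\subseteq X_w$ be a dense $B_{L_{I'}}$-orbit. Pick $x\in \mathcal{O}$; then $\mathcal{O}=B_{L_{I'}}\cdot x\subseteq B_{L_I}\cdot x$. Since $\mathcal{O}$ is dense in $X_w$, so is $B_{L_I}\cdot x$. Hence $X_w$ contains a dense $B_{L_I}$-orbit, and so (using normality) is $L_I$-spherical.

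There is no real obstacle here; the statement is essentially the tautology that enlarging the acting group can only enlarge orbits, together with the standard fact that Borels of a smaller Levi can be chosen inside Borels of a larger one containing it.
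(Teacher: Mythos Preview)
Your proof is correct and follows essentially the same approach as the paper: both choose Borels $B_{L_{I'}}=B\cap L_{I'}\subseteq B\cap L_I=B_{L_I}$ and observe that a dense $B_{L_{I'}}$-orbit forces a dense $B_{L_I}$-orbit. You are simply more explicit about the containment $L_{I'}\subseteq L_I$ and about picking a point in the dense orbit, but the argument is the same.
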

\begin{proof} 

Any Borel subgroup in $L_{I'}$ is of the form $B_{I'} := L_{I'} \cap B$ for some Borel subgroup $B$ of $G$. Then $B_{I'} \subseteq B_I:=L_I \cap B$. 
Clearly if $B_{I'}$ has an open dense orbit in $X_w$, then $B_I$ must have an open dense orbit in $X_w$. Thus if $X_w$ is $L_{I'}$-spherical, then $X_w$ is $L_I$-spherical.
\end{proof}

\begin{Remark}
An anonymous referee points out to us that, in view of 
Proposition~\ref{prop:geometricmono}, the terminology we use of $X_w$ being
``maximally spherical'' if it is $L_{J(w)}$ spherical is, in a sense, backwards. By Proposition~\ref{prop:geometricmono}, $X_w$ being $L_{J(w)}$-spherical is a necessary condition for
it to be $L_I$-spherical for any $I\subsetneq J(w)$. Hence, $L_{J(w)}$-spherical is ``least spherical'', and the ``most spherical'' are those that are $L_{\emptyset}$-spherical since they are $L_{I'}$-spherical for \emph{any} $I'\subseteq J(w)$. Due to Proposition~\ref{prop:monotone}, a similar remark applies to our notion of $w$ being ``maximally spherical''. 
\end{Remark}

\begin{proposition}
\label{prop:twopieces}
Suppose $X, Y\subseteq [r]$ where $[s_x,s_y]=id$ for all $x\in X, y\in Y$.
Let $w=uv$ where $u\in W_X$ and $v\in W_Y$. If $I\subseteq J(w)$
then $w$ is an $I$-spherical element of $W$ if and only if $u$ is an $(I\cap X)$-spherical element of 
$W_X$ and $v$ is an $(I\cap Y)$-spherical element of $W_Y$.
\end{proposition}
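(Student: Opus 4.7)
The plan is to exploit the fact that the commutation hypothesis $[s_x,s_y]=0$ for all $x\in X,y\in Y$ forces the parabolic subgroup $W_{X\cup Y}$ to split as an internal direct product $W_X\times W_Y$ (working under the natural reading that $X\cap Y=\emptyset$, which the hypothesis forces up to treating repeated generators trivially). Consequently $\ell(w)=\ell(u)+\ell(v)$, every $R\in {\text{Red}}(w)$ can be transformed by commutation moves between $X$-generators and $Y$-generators into a concatenation $R=R_u R_v$ with $R_u\in{\text{Red}}(u)$ using only $s_x$ $(x\in X)$ and $R_v\in{\text{Red}}(v)$ using only $s_y$ $(y\in Y)$, and conversely any such concatenation lies in ${\text{Red}}(w)$. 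First I would establish this length-additivity and shuffle statement cleanly.

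The second structural observation is that the Coxeter diagram ${\mathcal G}$ has no edges between $X$-nodes and $Y$-nodes (an edge would force $s_xs_y\ne s_ys_x$). Hence each connected component ${\mathcal C}^{(z)}$ of ${\mathcal G}_I$ lies entirely in $X$ or entirely in $Y$, and those components lying in $X$ are precisely the connected components of ${\mathcal G}_{I\cap X}$, with identical parabolic subgroups $W_{I^{(z)}}$ and identical long elements $w_0^{(z)}$; symmetrically for $Y$. Likewise, since $u,v$ commute, $J(w)=J(u)\sqcup J(v)$, so $I\cap X\subseteq J(u)$ and $I\cap Y\subseteq J(v)$, making the $(I\cap X)$- and $(I\cap Y)$-sphericality statements well-posed.

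With these two facts in hand, both directions reduce to bookkeeping. For $(\Leftarrow)$, splice an $(I\cap X)$-witness $R_u$ with an $(I\cap Y)$-witness $R_v$ into $R:=R_u R_v\in{\text{Red}}(w)$. To verify (S.1) for $R$ with respect to $I$: each $j\in [r]-I$ belongs to $X-I$, to $Y-I$, or to $[r]-(X\cup Y)$, and in each case the count of $j$ in $R$ equals the count in $R_u$, in $R_v$, or is zero. For (S.2) with respect to $I$: each component ${\mathcal C}^{(z)}$ lies in $X$ or $Y$, so counting letters of $R$ landing in ${\mathcal C}^{(z)}$ reduces to counting in the appropriate half, where the bound is supplied by the corresponding witness. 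For $(\Rightarrow)$, take any $I$-witness $R$, commute it into $R_u R_v$, and read off the restrictions: (S.1) for $R_u$ with respect to $I\cap X$ follows from (S.1) for $R$ because $X-(I\cap X)=X-I\subseteq [r]-I$; and (S.2) for $R_u$ with respect to $I\cap X$ follows from (S.2) for $R$ via the component identification above. The argument for $R_v$ is symmetric.

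The main step requiring care is the initial direct-product/shuffle claim of the first paragraph; once that is established, the remainder is a transparent translation of Definition~\ref{def:main} across the isomorphism $W_{X\cup Y}\cong W_X\times W_Y$.
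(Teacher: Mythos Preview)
Your proposal is correct and follows essentially the same approach as the paper: the paper's proof is a two-line sketch that records exactly the two structural facts you isolate, namely $J(u)=J(w)\cap X$, $J(v)=J(w)\cap Y$, and that every connected component of ${\mathcal G}_I$ is a component of $({\mathcal G}_X)_{I\cap X}$ or of $({\mathcal G}_Y)_{I\cap Y}$. Your write-up simply fills in the length-additivity/shuffle argument and the (S.1), (S.2) bookkeeping that the paper leaves implicit.
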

\begin{proof}
This follows since $J(u)=J(w)\cap X$ and $J(v)=J(w)\cap Y$, and since any component of
${\mathcal G}_I$ is a component of the induced subdiagram of ${\mathcal G}_X$ on the nodes ${I\cap X}$ or the induced subdiagram of ${\mathcal G}_Y$ on the nodes ${I\cap Y}$.
\end{proof}

Suppose $D,D'$ are two Coxeter diagrams and $\phi:D\hookrightarrow D'$ is an embedding of Coxeter diagrams
(preserving edge multiplicities). Then $\phi$ induces an embedding of Coxeter groups $(W_D,S_D)\hookrightarrow
(W_{D'},S_{D'})$, their labellings $[r_D]\hookrightarrow [r_{D'}]$, and root systems $(\Phi_D,\Delta_D)\hookrightarrow (\Phi_{D'},\Delta_{D'})$. Abusing notation, we use
$\phi$ to indicate all of these injections.

\begin{proposition}[Diagram embedding]
\label{prop:Dynkininjection}
If $w\in W_D$ is $I$-spherical for $I\subseteq J(w)$ then $\phi(w) \in W_{D'}$ is $\phi(I)$-spherical.
\end{proposition}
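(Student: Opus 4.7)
The plan is direct: given an $I$-witness $R = s_{i_1}\cdots s_{i_{\ell(w)}}$ for $w$ in $W_D$, I would show that the word $\phi(R) := s_{\phi(i_1)}\cdots s_{\phi(i_{\ell(w)})}$ is itself a $\phi(I)$-witness for $\phi(w)$ in $W_{D'}$, which by Definition~\ref{def:main} suffices.

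The key auxiliary fact is that because $\phi$ preserves edge multiplicities, $\phi(S_D)$ generates a parabolic subgroup of $W_{D'}$ that is isomorphic to $W_D$ as a Coxeter system; in particular, the Coxeter length of $\phi(u)$ in $W_{D'}$ agrees with $\ell_{W_D}(u)$ for every $u \in W_D$. This pins down three things needed for the verification: first, $\phi(R) \in {\text{Red}}(\phi(w))$; second, the containment $\phi(I) \subseteq J(\phi(w))$ holds, since if $\ell(s_j w)<\ell(w)$ then some expression in ${\text{Red}}(w)$ begins with $s_j$, and its $\phi$-image is a reduced expression for $\phi(w)$ beginning with $s_{\phi(j)}$; and third, the parabolic long elements transport, i.e.\ $\phi(w_0^{(z)})$ has length $\ell(w_0^{(z)})$ in $W_{D'}$.

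For the actual checking of (S.1) and (S.2) for $\phi(R)$ with respect to $\phi(I)$, the embedding hypothesis guarantees that the induced subdiagram ${\mathcal G}'_{\phi(I)}$ of $D'$ on the vertex set $\phi(I)$ is canonically isomorphic, as a Coxeter diagram, to ${\mathcal G}_I$; hence its connected components are precisely $\phi({\mathcal C}^{(1)}), \ldots, \phi({\mathcal C}^{(m)})$. Condition (S.1) for $\phi(R)$ then reduces to (S.1) for $R$: any $j \in [r_{D'}] - \phi(I)$ either lies outside $\phi([r_D])$, in which case no letter of $\phi(R)$ equals $j$, or $j = \phi(j')$ for a unique $j' \in [r_D] - I$, and $\#\{t : \phi(i_t)=j\} = \#\{t : i_t = j'\} \leq 1$. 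Condition (S.2) reduces similarly, component by component, using $\#\text{vertices}(\phi({\mathcal C}^{(z)})) = \#\text{vertices}({\mathcal C}^{(z)})$ and the matching of long-element lengths.

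I do not anticipate any serious obstacle. The only point requiring care is a clean reading of ``embedding of Coxeter diagrams preserving edge multiplicities'', so that induced subdiagrams (and hence connected components, parabolic subgroups, and their long elements) transport faithfully across $\phi$ in both directions; once this is in hand, the result is a transcription along $\phi$ of the conditions defining an $I$-witness.
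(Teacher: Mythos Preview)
Your proposal is correct and follows essentially the same route as the paper's own proof: take an $I$-witness $R$ for $w$, push it through $\phi$, and verify that $\phi(R)$ is a $\phi(I)$-witness for $\phi(w)$ by checking (S.1) and (S.2) against the transported diagram data. Your version is in fact slightly more careful than the paper's in one respect: you explicitly split the (S.1) check into the cases $j\notin\phi([r_D])$ and $j=\phi(j')$ with $j'\in[r_D]\setminus I$, whereas the paper absorbs this into a relabeling convention and writes $\phi([r_D]\setminus I)=[r_D']\setminus\phi(I)$ without comment.
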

\begin{proof}
Suppose $R=s_{i_1}\cdots s_{i_{\ell(w)}}\in \text{Red}_{(W_D,S_D)}(w)$ is an $I$-witness.
We may suppose that
the $\phi$ sends $D$ to the nodes of $D'$ labelled by $1',2',\ldots, r_D'$. Then
$s_{i_1'}\cdots s_{i_{\ell(w)}'}\in \text{Red}_{(W_{D'},S_{D'})}(\phi(w))$ 
and clearly 
\[\phi(I)\subseteq \phi(J(w))=J(\phi(w))\] 
(thus it make sense
to ask if $\phi(w)$ is $\phi(I)$-spherical). Since 
$\phi([r_D]- I)=\{1',2',\ldots, r_D'\}- \phi(I)$, 
(S.1) holds for $\phi(I)$.
Now (S.2) holds since ${\mathcal G}_I\cong {\mathcal G}_{\phi(I)}$ (Coxeter diagram isomorphism).
\end{proof}

\begin{example}[$D_4$] Of the $2^3 4!=192$ many elements of the Weyl group of type \dynkin[labels={1,3,2,4}, edge length=0.5cm]D4, the $38$ that are not $J(w)$-spherical are given in Table~\ref{table:D4}. One can check that
the list is consistent with Propositions~\ref{prop:twopieces} and~\ref{prop:Dynkininjection}. For instance, from Example~\ref{ex:S123456}, all elements of the Weyl groups for $A_1,A_2,$ and $A_3$ are maximally spherical. This
combined with the two propositions says that any $w\in W(D_4)$ that is in a (strict) parabolic subgroup is spherical.
That is why all of the words in the table use the entirety of $S$.
\end{example}

\begin{table}[]
\begin{tabular}{lll}
$s_1 s_2 s_3 s_1 s_4 s_3 s_1$ &  $s_1 s_2 s_3 s_1 s_2 s_4 s_3 s_1$ &   $s_1 s_3 s_1 s_2 s_3 s_4 s_3 s_1$\\
  $s_1 s_2 s_3 s_1 s_2 s_3 s_4 s_3 s_1$ &
$s_1 s_2 s_3 s_1 s_4 s_3 s_1 s_2$ &
  $s_2 s_3 s_1 s_2 s_4 s_3 s_1 s_2$\\
    $s_1 s_2 s_3 s_1 s_2 s_4 s_3 s_1 s_2$ &
$s_2 s_3 s_2 s_4 s_3 s_1 s_2$ &
$s_1 s_3 s_1 s_2 s_3 s_4 s_3 s_1 s_2$\\
  $s_2 s_3 s_1 s_2 s_3 s_4 s_3 s_1 s_2$ & 
    $s_1 s_2 s_3 s_1 s_2 s_3 s_4 s_3 s_1 s_2$ &  $s_2 s_3 s_1 s_2 s_4 s_3 s_2$\\
    $s_1 s_2 s_3 s_1 s_2 s_4 s_3 s_2$ & $s_2 s_3 s_2 s_4 s_3 s_2$ &  $s_1 s_3 s_1 s_2 s_3 s_4 s_3 s_2$\\
    $s_2 s_3 s_1 s_2 s_3 s_4 s_3 s_2$ &
  $s_1 s_2 s_3 s_1 s_2 s_3 s_4 s_3 s_2$ &
   $s_1 s_2 s_3 s_1 s_4 s_3 s_1 s_2 s_3$\\
     $s_1 s_3 s_1 s_2 s_4 s_3 s_1 s_2 s_3$ & 
 $s_2 s_3 s_1 s_2 s_4 s_3 s_1 s_2 s_3$ &   
  $s_1 s_2 s_3 s_1 s_2 s_4 s_3 s_1 s_2 s_3$\\
   $s_3 s_2 s_4 s_3 s_1 s_2 s_3$ &  $s_2 s_3 s_2 s_4 s_3 s_1 s_2 s_3$ &
    $s_3 s_1 s_2 s_3 s_4 s_3 s_1 s_2 s_3$ \\ 
    $s_1 s_3 s_1 s_2 s_3 s_4 s_3 s_1 s_2 s_3$ & 
    $s_2 s_3 s_1 s_2 s_3 s_4 s_3 s_1 s_2 s_3$ &  $s_1 s_2 s_3 s_1 s_2 s_3 s_4 s_3 s_1 s_2 s_3$\\
    $s_1 s_3 s_1 s_2 s_3 s_4 s_3$ &
 $s_1 s_2 s_3 s_1 s_4 s_3 s_1 s_2 s_3 s_4$ &
    $s_1 s_3 s_1 s_2 s_4 s_3 s_1 s_2 s_3 s_4$\\
      $s_2 s_3 s_1 s_2 s_4 s_3 s_1 s_2 s_3 s_4$ & 
    $s_1 s_2 s_3 s_1 s_2 s_4 s_3 s_1 s_2 s_3 s_4$ &
     $s_3 s_2 s_4 s_3 s_1 s_2 s_3 s_4$\\
    $s_2 s_3 s_2 s_4 s_3 s_1 s_2 s_3 s_4$ & $s_3 s_1 s_2 s_3 s_4 s_3 s_1 s_2 s_3 s_4$ & 
    $s_1 s_3 s_1 s_2 s_3 s_4 s_3 s_1 s_2 s_3 s_4$ \\  
    $s_2 s_3 s_1 s_2 s_3 s_4 s_3 s_1 s_2 s_3 s_4$ &
    $s_1 s_2 s_3 s_1 s_2 s_3 s_4 s_3 s_1 s_2 s_3 s_4$ & \ 
\end{tabular}
\smallskip
\caption{Non $J(w)$-spherical elements of $D_4$ \label{table:D4}}
\end{table}

Proposition~\ref{prop:Dynkininjection} is consistent with Conjecture~\ref{conj:main}. In our proof of Theorem~\ref{thm:bigrassmannian}, we will require the geometric version of Proposition~\ref{prop:Dynkininjection} for the general linear group; this is Proposition~\ref{prop:dynkingeo} which we prepare for now. The result holds for reductive groups in other types. We omit the general proof as the algebraic groups setup required is substantial.

Let $n, f, N \in \mathbb{Z}_{>0}$ be such that $n+f \leq N$. We now define maps between the root systems, Weyl groups, and labelings of $GL_n$ and $GL_N$. Let $T_n$ and $T_N$ be the subspaces of diagonal matrices in $GL_n$ and $GL_N$, respectively.
\begin{equation}
\begin{array}{ccccc}
\Phi_n \hookrightarrow \Phi_N & \quad & W_n \hookrightarrow W_N & \quad & [n-1] \hookrightarrow [N-1] \\
\alpha_i \mapsto \alpha_{f+i} & \quad &  s_i \mapsto s_{f+i} & \quad &  i \mapsto f + i \\
\end{array}
\end{equation}
Abusing notation, we use $\iota$ to indicate all of these maps. Let $h: GL_n \hookrightarrow GL_N$ be given by
\begin{equation}
\label{eqn:Jul13vcc}
g \mapsto \begin{bmatrix}
    {\sf Id}_f & & \\
    & g & \\
    & & {\sf Id}_{N-n-f}
  \end{bmatrix},
\end{equation}
where ${\sf Id}_k$ is the $k \times k$ identity matrix. The map $h$ is compatible with the maps $\iota$. That is, 
\begin{equation}
\label{eqn:hcompat}
h(w) = \iota(w)\text{ for $w \in W_n$;}
\end{equation}
here we abuse notation and write $h(w)$ to mean the image, under $h$, of a coset representative of $w$ in $N(T_n)$ is equal to a coset representative of $\iota(w)$ in $N(T_N)$. Further, $h(U_{\alpha}) = U_{\iota(\alpha)}$ where $U_{\alpha}$ is the \emph{root subgroup} of $\alpha \in \Phi_n$.
Since $h(B_n) \subseteq B_N$, $h$ descends to an injective map 
\begin{equation}
\label{eqn:hoverlinewelldefined}
\overline{h}: GL_n / B_n \hookrightarrow GL_N / B_N.
\end{equation}

We now prove a lemma inspired by E.~Richmond and W.~Slofstra's \cite[Lemma~4.8]{Richmond.Slofstra}.
\begin{lemma}
\label{lemma:isoequivariantL}
The map $\overline{h}: GL_n / B_n \hookrightarrow GL_N / B_N$ induces a $L_{J(w)}$-equivariant isomorphism $X_{wB_n} \hookrightarrow X_{\iota(w)B_N}$ for all $w \in W_n$ (the action of $L_{J(w)}$ on the right hand side is $h(L_{J(w)})$).
\end{lemma}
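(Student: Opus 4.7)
The plan is to show that $\overline{h}$ is a closed immersion restricting to an isomorphism of Schubert varieties, and then check the equivariance is immediate because $h$ is a group homomorphism.

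First, I would verify that $\overline{h}$ is a closed immersion of varieties. The map $h: GL_n \hookrightarrow GL_N$ from (\ref{eqn:Jul13vcc}) is a closed embedding of algebraic groups, and a direct block-matrix check shows $h^{-1}(B_N) = B_n$ (equivalently $h(B_n) = h(GL_n) \cap B_N$). Hence $\overline{h}$ is injective with image equal to the orbit $h(GL_n)\cdot(B_N/B_N)$; since $GL_n/B_n$ is complete, this orbit is closed in $GL_N/B_N$, so $\overline{h}$ is a closed immersion.

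Second, I would identify the image of the Schubert variety. Using $h(w) = \iota(w)$ from (\ref{eqn:hcompat}) together with $h(B_n) \subseteq B_N$,
\[
\overline{h}(B_n w B_n / B_n) \;=\; h(B_n)\,\iota(w)\,B_N/B_N \;\subseteq\; B_N\,\iota(w)\,B_N/B_N.
\]
Taking Zariski closures yields $\overline{h}(X_{wB_n}) \subseteq X_{\iota(w)B_N}$. The left-hand side is closed and irreducible of dimension $\ell(w)$ (since $\overline{h}$ is a closed immersion), while $\dim X_{\iota(w)B_N} = \ell(\iota(w)) = \ell(w)$. Both being irreducible of the same dimension forces equality, so $\overline{h}|_{X_{wB_n}}$ is a closed immersion with full image, hence an isomorphism onto $X_{\iota(w)B_N}$.

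Third, I would verify $L_{J(w)}$-equivariance. A direct check with the type $A$ descent criterion on the permutation $\iota(w)$ (which fixes $[1,f]$ and $[f+n+1,N]$ pointwise) gives $J(\iota(w)) = \iota(J(w))$. Elements of $h(L_{J(w)})$ are block-diagonal matrices with identity flanks and an $L_{J(w)}$-block in the middle, and so lie in the Levi $L_{\iota(J(w))} = L_{J(\iota(w))}$ and in particular in $P_{J(\iota(w))}$, the stabilizer of $X_{\iota(w)B_N}$. Finally, for $l \in L_{J(w)}$ and $gB_n \in X_{wB_n}$,
\[
\overline{h}(l \cdot gB_n) \;=\; h(lg)\,B_N \;=\; h(l)\,h(g)\,B_N \;=\; h(l)\cdot\overline{h}(gB_n),
\]
proving equivariance. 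The only mildly subtle step is the dimension-and-closure argument identifying $\overline{h}(X_{wB_n})$ with $X_{\iota(w)B_N}$; everything else is bookkeeping with the compatibilities $h(w) = \iota(w)$ and $h(B_n) \subseteq B_N$.
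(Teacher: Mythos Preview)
Your proof is correct, but takes a genuinely different route from the paper's for the key step of surjectivity. The paper proves $\overline h(X_{wB_n})=X_{\iota(w)B_N}$ by an explicit cell-by-cell computation: writing $B_N=B_K U_K$ for $K=\{f+1,\ldots,f+n-1\}$, using that $U_K$ is stable under conjugation by $v\in (W_N)_K$ and that $B_K=h(B_n)H$ for a central torus $H$, they show directly that each Schubert cell $B_N\iota(w)B_N/B_N$ equals $\overline h(B_nwB_n/B_n)$. Surjectivity then follows from the Bruhat decomposition, and the isomorphism from Zariski's Main Theorem plus normality of $X_{\iota(w)B_N}$. Your argument replaces all of this parabolic machinery with the observation that $\overline h(X_{wB_n})$ is a closed irreducible subvariety of $X_{\iota(w)B_N}$ of the same dimension $\ell(w)=\ell(\iota(w))$, hence equal. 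This is shorter and more elementary; the paper's computation, on the other hand, gives the finer statement that $\overline h$ matches Schubert \emph{cells} bijectively.

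One small point to tighten: your inference ``injective with closed image, so $\overline h$ is a closed immersion'' is not valid as stated (think of Frobenius in positive characteristic). Over $\mathbb C$ it is fine, since the orbit map $GL_n/B_n\to h(GL_n)\!\cdot\! eB_N$ is a bijection of smooth varieties and hence an isomorphism, but you should say so. Alternatively, you can drop the closed-immersion claim entirely: once you have that $\overline h|_{X_{wB_n}}$ is injective, proper (source projective), and surjective onto the normal variety $X_{\iota(w)B_N}$, Zariski's Main Theorem already gives the isomorphism --- exactly the mechanism the paper uses. The equivariance paragraph matches the paper's argument.
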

\begin{proof}
That $\overline{h}: X_{wB_n} \hookrightarrow X_{\iota(w)B_N}$ follows from \eqref{eqn:hcompat} and the Bruhat decomposition. Thus, since $X_{\iota(w)B_N}$ is normal, to show that $X_{wB_n} \hookrightarrow X_{\iota(w)B_N}$ is an isomorphism we need only show surjectivity (by Zariski's Main Theorem). 

Let $K = \{f+1, \ldots, f+n-1 \}$. The parabolic $P_K = L_K U_K$, where $U_K=R_u(P_K)$ is the unipotent radical of $P_K$. Let $B_K := L_K \cap B_N$ be a Borel subgroup of $L_K$. From, \emph{e.g.}, the proof of \cite[Lemma~4.8]{Richmond.Slofstra}, we recall that 
\begin{equation}
\label{eqn:borelxunipotent}
B_N = B_K U_K,
\end{equation}
and that $U_K$ is stable under conjugation by any $v \in (W_N)_K$ (parabolic subgroup), and in particular
\begin{equation}
\label{eqn:stableconj}
v^{-1} U_K v B_N = B_N.
\end{equation}

An element $b \in B_K$ has the form 
\begin{center}
$\begin{bmatrix}
    r & & \\
    & s & \\
    & & t
  \end{bmatrix},$
\end{center}
where $r \in T_f$, $s \in B_n$, and $t \in T_{N-n-f}$ (where $T_k$ denotes the subspace of diagonal matrices in $GL_k$). Thus for any such $b$, there exists a 
\begin{center}
$t_b = \begin{bmatrix}
    r^{-1} & & \\
    & {\sf Id}_n & \\
    & & t^{-1}
  \end{bmatrix} \in H := \left\{ \begin{bmatrix}
    A & & \\
    & {\sf Id}_n & \\
    & & C
  \end{bmatrix} : A \in T_f,  B \in T_{N-n-f} \right\}$ 
\end{center}
such that 
\begin{center}
\label{eqn:Jul13ret}
$b t_b = \begin{bmatrix}
    {\sf Id}_f & & \\
    & s & \\
    & & {\sf Id}_{N-n-f}
  \end{bmatrix} = h(s).$
\end{center}
This allows us to conclude that
\begin{equation}
\label{eqn:imgxtorus}
h(B_n) H = B_K.
\end{equation}
Also, notice that
\begin{equation}
\label{eqn:Hcommutes}
H v = v H\text{ for $v \in (W_N)_K$}.
\end{equation}

Consider the Schubert cell of $v \in (W_N)_K$. We have $v = \iota(w)$ for some $w \in W_n$.
\begin{equation*}
\begin{array}{rlr}
B_N v B_N / B_N & = B_K U_K v B_N / B_N & \qquad \qquad \qquad \qquad \qquad \qquad \eqref{eqn:borelxunipotent} \\
& = B_K (v v^{-1}) U_K v B_N / B_N & \\
& = B_K v B_N / B_N & \eqref{eqn:stableconj} \\
& = h(B_n) H v B_N / B_N & \eqref{eqn:imgxtorus} \\
& = h(B_n) v B_N / B_N & \eqref{eqn:Hcommutes}\text{ and }H \subseteq  B \\
& = h(B_n) \iota(w) B_N / B_N & \\
& = h(B_n w) B_N / B_N & \eqref{eqn:hcompat} \\
& = \overline{h}(B_n w B_n / B_n) & \eqref{eqn:hoverlinewelldefined} \\
\end{array}
\end{equation*}
Thus $\overline{h}$ induces a surjection from the Schubert cell of $w\in W_n$ onto the Schubert cell of $\iota(w)=v$. Since the same holds for all $u = \iota(w') \leq \iota(w) = v \in (W_N)_K$, the Bruhat decomposition implies $X_{wB_n} \hookrightarrow X_{\iota(w)B_N}$ is surjective. 

The map $\overline{h}$ is $GL_n$-equivariant (where the action on the right hand side is given by $h(GL_n)$). Thus $L_{J(w)} \subseteq {\rm stab}_{GL_n}(X_{wB_n})$ implies $h(L_{J(w)}) \subseteq {\rm stab}_{GL_N}(X_{\iota(w)B_N})$. Thus the isomorphism $X_{wB_n} \hookrightarrow X_{\iota(w)B_N}$ is $L_{J(w)}$-equivariant. 
\end{proof}

\begin{proposition}[Diagram embedding; geometric version]
\label{prop:dynkingeo}
If $X_{wB_n} \subseteq GL_n / B_n$ is $L_I$-spherical for $I \subseteq J(w)$, then $X_{\iota(w)B_N} \subseteq GL_N / B_N$ is $L_{\iota(I)}$-spherical.
\end{proposition}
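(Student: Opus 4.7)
The plan is to reduce the statement to a direct transfer of a dense Borel-orbit via the $L_{J(w)}$-equivariant isomorphism supplied by Lemma~\ref{lemma:isoequivariantL}, and then to absorb the image of the acting Borel into a genuine Borel subgroup of $L_{\iota(I)}$.

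First, since $I \subseteq J(w)$ we have $L_I \subseteq L_{J(w)}$, so restricting the $L_{J(w)}$-equivariance of $\overline{h}\colon X_{wB_n}\xrightarrow{\sim} X_{\iota(w)B_N}$ in Lemma~\ref{lemma:isoequivariantL} gives an $L_I$-equivariant isomorphism (with $L_I$ acting on the target through $h$). By hypothesis there exists a Borel subgroup $B_{L_I}:=L_I\cap B_n$ of $L_I$ and a point $xB_n\in X_{wB_n}$ whose $B_{L_I}$-orbit is dense. Applying $\overline{h}$, the subgroup $h(B_{L_I})\subseteq GL_N$ has a dense orbit on $X_{\iota(w)B_N}$ through the point $\overline{h}(xB_n)$.

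Next I would check that $h(B_{L_I})$ sits inside a Borel subgroup of $L_{\iota(I)}$. Writing $L_I$ as the group generated by $T_n$ and the root subgroups $U_{\alpha_i}$ for $i\in I$, the compatibility $h(T_n)\subseteq T_N$ and $h(U_{\alpha_i})=U_{\iota(\alpha_i)}$ recorded just before Lemma~\ref{lemma:isoequivariantL} shows that $h(L_I)\subseteq L_{\iota(I)}$. Since $h(B_n)\subseteq B_N$, we obtain
\[
h(B_{L_I}) \;\subseteq\; h(L_I)\cap B_N \;\subseteq\; L_{\iota(I)}\cap B_N \;=:\; B_{L_{\iota(I)}},
\]
and $B_{L_{\iota(I)}}$ is a Borel subgroup of $L_{\iota(I)}$.

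Finally, any orbit of an overgroup contains the corresponding orbit of a subgroup, so the $B_{L_{\iota(I)}}$-orbit of $\overline{h}(xB_n)$ contains the dense $h(B_{L_I})$-orbit, hence is itself dense in $X_{\iota(w)B_N}$. This is precisely the $L_{\iota(I)}$-sphericality of $X_{\iota(w)B_N}$. The only delicate point is the equivariance and the containment $h(L_I)\subseteq L_{\iota(I)}$; the rest is formal once Lemma~\ref{lemma:isoequivariantL} is in hand, so I expect no substantive obstacle beyond bookkeeping of the groups involved.
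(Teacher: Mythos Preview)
Your argument is correct and follows essentially the same route as the paper: both use the $L_{J(w)}$-equivariant isomorphism of Lemma~\ref{lemma:isoequivariantL}, restrict to $L_I$, verify $h(L_I)\subseteq L_{\iota(I)}$, and conclude that a dense Borel orbit transfers. The only cosmetic difference is that you work explicitly at the level of Borel subgroups (showing $h(B_{L_I})\subseteq B_{L_{\iota(I)}}$ and enlarging the orbit), whereas the paper phrases it as ``$X_{\iota(w)B_N}$ is $h(L_I)$-spherical, hence $L_{\iota(I)}$-spherical'' and records the chain $h(L_I)\subset L_{\iota(I)}\subseteq L_{J(\iota(w))}\subset \mathrm{stab}_{GL_N}(X_{\iota(w)B_N})$ to ensure the $L_{\iota(I)}$-action is defined; you might add that one line, but otherwise the proofs coincide.
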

\begin{proof}
Lemma~\ref{lemma:isoequivariantL} implies that $X_{wB_n} \cong X_{\iota(w)B_N}$ as $L_{J(w)}$-varieties (and hence as $L_I$-varieties for $I \subseteq J(w)$). If $I \subseteq [n-1]$, then $h(L_I) \subseteq L_{\iota(I)}$. In particular, since $\iota(J(w)) = J(\iota(w))$, this implies \[ h(L_{I}) \subseteq L_{\iota(I)} \subseteq L_{\iota(J(w))} = L_{J(\iota(w))} \subseteq {\rm stab}_{GL_N}(X_{\iota(w)B_N}). \] We conclude that, if $X_{wB_n}$ is $L_{I}$-spherical, then $X_{\iota(w)B_N}$ is $L_{I}$-spherical, which in turn implies  $X_{\iota(w)B_N}$ is $L_{\iota(I)}$-spherical.
\end{proof}

\section{The general linear group}\label{sec:3}

In what remains, $G=GL_n$. This is type $A_{n-1}$, hence $S=\{s_i=(i \ i+1): 1\leq i\leq n-1\}$. We express $w\in W(A_{n-1})\cong {\mathfrak S}_n$ in one-line notation. Here,
\begin{equation}
\label{eqn:Jwright}
J(w)=\{j\in [n-1]: w^{-1}(j)>w^{-1}(j+1)\}
\end{equation}
($j\in J(w)$ if $j+1$ appears to the left of $j$ in $w$'s one-line notation). Indeed, the description (\ref{eqn:Jwright})
is saying the that left descents of $w$ are the \emph{right} descents of $w^{-1}$. Let $I\in 2^{J(w)}$ and
\[D:=[n-1]- I=\{d_1<d_2<d_3<\ldots<d_k\}.\] 
By convention, $d_0:=0,d_{k+1}:=n$.

\begin{definition}[$GL_n$-version of Definition~\ref{def:main}]
$w\in {\mathfrak S}_n$ is \emph{$I$-spherical} 
if $R=s_{i_1}s_{i_2}\cdots s_{i_{\ell(w)}}\in {\text{Red}}(w)$ exists such that
\begin{itemize}
\item[(S.1')] $s_{d_i}$ appears at most once in $R$
\item[(S.2')] $\#\{m:d_{t-1}<i_m< d_t\} < {d_{t}-d_{t-1}+1\choose 2}$   for $1\leq t\leq k+1$.
\end{itemize}
$w$ is \emph{maximally spherical} if it is $J(w)$-spherical.
\end{definition}
Clearly (S.1') is the specialization of (S.1). For (S.2), the Coxeter graph induced by the nodes of
the $A_{n-1}$ diagram strictly between $d_{t-1}$ and $d_t$ is type $A_{d_t-d_{t-1}-1}$. In 
type $A_{d_t-d_{t-1}-1}$, $\ell(w_0)={d_t-d_{t-1}\choose 2}$. Now 
$\ell(w_0)+(d_t-d_{t-1}-1)=
{d_t-d_{t-1}+1\choose 2}-1$,
which agrees with (S.2'), once one accounts for the strict inequality used.

Let ${T}$ be invertible
diagonal matrices and ${ B}$ be the
invertible upper triangular matrices in $G={GL}_n$.  Hence ${G}/{B}$ is the variety ${\sf Flags}({\mathbb C}^n)$ of complete flags of subspaces in ${\mathbb C}^n$. Here, ${ L}_I$ is the Levi subgroup of invertible block matrices 
\begin{equation}
\label{eqn:thechoice}
{L}_I={GL}_{d_1-d_0}\times {GL}_{d_2-d_1}\times \cdots\times
{GL}_{d_k-d_{k-1}}\times {GL}_{d_{k+1}-d_k}.
\end{equation}

\begin{conjecture}[$GL_n$-version of Conjecture~\ref{conj:main}]\label{conj:MainA}
Let $I\subseteq J(w)$. $X_w$ is $L_I$-spherical if and only if $w$ is $I$-spherical.
\end{conjecture}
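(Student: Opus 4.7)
The plan is to leverage the bridge stated as Theorem~\ref{thm:fundamentalRelationship}, which translates $L_I$-sphericality of $X_w$ into a split multiplicity-freeness statement for an infinite family of key polynomials indexed by weights pulled back from line bundles on $X_w$. Since these key polynomials are Demazure characters, and Perrin's orbit-finiteness criterion for sphericality is shadowed by multiplicity-freeness of the $T$-weight decomposition of sections, reducing Conjecture~\ref{conj:MainA} to a purely combinatorial statement about key polynomials appears to be the most uniform route. The combinatorial conditions (S.1$'$) and (S.2$'$) should translate (in one direction directly, in the other via Theorem~\ref{thm:mfKey} and its relative Theorem~\ref{cor:mfkeyweak}) into precisely the split-symmetry data controlling these characters.

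For the forward direction, I would fix an $I$-witness $R=s_{i_1}\cdots s_{i_{\ell(w)}}$ and use the Bott--Samelson resolution $Z_R\to X_w$ to produce an explicit open $B_{L_I}$-orbit. The restrictions (S.1$'$) and (S.2$'$) give precise control on how many factors of $Z_R$ are aligned with each block of the Levi in \eqref{eqn:thechoice}; the number of free parameters should match $\dim B_{L_I}$ up to $T$, so that a generic product of root subgroup elements sweeps out a dense orbit. The base cases are already in hand: Coxeter elements (trivial), the long element (Theorem~\ref{prop:agreewithMWZ}), bigrassmannians (Theorem~\ref{thm:bigrassmannian}), and the rank-two and dihedral cases (Theorem~\ref{prop:ranktwo}, Proposition~\ref{prop:dihedral}). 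To move from base cases to the general case, I would use Propositions~\ref{prop:twopieces} and~\ref{prop:Dynkininjection} in tandem with the geometric lift Proposition~\ref{prop:dynkingeo}, reducing to reduced (connected, non-block-decomposable) instances.

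For the converse, the input is Proposition~\ref{prop:subtrick} and geometric monotonicity (Propositions~\ref{prop:geometricmono} and~\ref{prop:godown}): if $w$ fails to be $I$-spherical, I would locate a Bruhat-minimal subword obstruction and then show that $X_u\subseteq X_w$ fails to be $L_I$-spherical for the minimal witness $u\leq w$. By Theorem~\ref{thm:subword}, such a minimal element is forced to contain a structured failure (either an $s_d$ with $d\notin I$ appearing twice, or an over-populated component $\mathcal{C}^{(z)}$). I would then argue that such an obstruction produces an infinite family of $B_{L_I}$-orbits, either directly by exhibiting a one-parameter deformation of $T$-fixed points under the $L_I$-action, or by invoking Theorem~\ref{thm:fundamentalRelationship} to show that the associated key polynomial in the infinite family is not split multiplicity-free, contradicting sphericality.

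The main obstacle will be completing the converse in full generality. The Bruhat-minimal obstructions to (S.2$'$) are not classified, and deducing actual non-sphericality from combinatorial non-witness-ability requires a uniform multiplicity-free criterion that the authors only conjecture to reduce to a single ``staircase'' key polynomial (Conjecture~\ref{conj:sphericalGeneric}). Absent that reduction, one must handle each obstruction case-by-case; the pattern-avoidance perspective (Conjecture~\ref{conj:pattern}) suggests a finite list of forbidden patterns, so a plausible intermediate target is to isolate that list and certify non-sphericality on each pattern using a minimal bigrassmannian resolution combined with Theorem~\ref{thm:bigrassmannian}. This is where I expect the bulk of the real work to lie.
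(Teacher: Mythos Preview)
The statement you are attempting to prove is a \emph{conjecture}, not a theorem: the paper does not prove Conjecture~\ref{conj:MainA} and does not claim to. What the paper does is accumulate evidence---special cases (Theorems~\ref{prop:agreewithMWZ}, \ref{prop:ranktwo}, \ref{thm:bigrassmannian}, the toric case via \cite{Karupp}), consistency checks (Propositions~\ref{prop:godown}, \ref{prop:monotone}, \ref{prop:twopieces}, \ref{prop:Dynkininjection} matched against their geometric counterparts), and a reduction of the geometric side to split multiplicity-freeness of key polynomials (Theorem~\ref{thm:fundamentalRelationship})---while leaving the full statement open. There is therefore no ``paper's own proof'' to compare against.

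Your proposal is a reasonable outline of a research program, and it is in fact aligned with the framework the paper sets up; you are invoking exactly the ingredients the authors develop. But it is not a proof, and you yourself identify the gap: the converse direction requires either Conjecture~\ref{conj:sphericalGeneric} (the staircase reduction) or Conjecture~\ref{conj:pattern} (the finite pattern list), both of which remain open in the paper. Your forward direction is also not complete---the Bott--Samelson dimension count you sketch is plausible heuristics, but ``the number of free parameters should match $\dim B_{L_I}$ up to $T$'' is precisely the content of the conjecture in that direction, and no argument is given for why (S.1$'$) and (S.2$'$) guarantee the orbit is dense rather than merely of the right dimension. In short: you have correctly identified the paper's strategy and its obstacles, but those obstacles are real and unresolved, which is why the statement is labeled a conjecture.
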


\begin{example}
Let $w=35246781\in {\mathfrak S}_8$. Here $J(w)=\{1,2,4\}$. If $I=J(w)$ then $D=\{3,5,6,7\}$. 
Now, $R=s_1 s_2 s_1 s_3 s_4 s_3 s_2 s_5 s_6 s_7 \in \text{Red}(w)$, but it fails (S.1'). Instead
consider
\[R'=s_1 s_2 s_1 s_4 s_3 s_2 s_4 s_5 s_6 s_7 \in \text{Red}(w).\] 
(S.1') holds. To verify (S.2') we check that
\begin{itemize}
\item $4=\#\{m:0<r_m< 3\}< {3-0+1\choose 2}=6$
\item $2=\#\{m:3<r_m< 5\}< {5-3+1\choose 2}=3$
\item $0=\#\{m:5<r_m< 6\}< {6-5+1\choose 2}=1$
\item $0=\#\{m:6<r_m< 7\}< {7-6+1\choose 2}=1$
\item $0=\#\{m:7<r_m< 8\}< {8-7+1\choose 2}=1$
\end{itemize}
Hence $w$ is maximally spherical.
\end{example}

\begin{example}
\label{exa:24531predict}
Let $n=5, w=24531$. Here $I=J(w)=\{1,3\}$ and $D=\{2,4\}$. 
Let $R=s_3 s_1 s_2 s_3 s_4 s_3\in \text{Red}(w)$. $R$ satisfies (S.1') but fails (S.2') since 
$\#\{m:2<i_m< 4\}={4-2+1\choose 2}=3$. One checks no expression in $\text{Red}(w)$ is an $I$-witness.  Hence Conjecture~\ref{conj:MainA} predicts
that $X_{24531}$ is not $L_{J(w)}$-spherical. We will \emph{prove} this is true in Example~\ref{exa:May20ddd}. 
\end{example}

A permutation $w\in {\mathfrak S}_n$ is \emph{bigrassmannian} if both $w$ and $w^{-1}$ have a unique descent.
A.~Lascoux-M.-P.~Sch\"utzenberger \cite{LS:treillis} initiated the study of these permutations and identified a number of
their nice (Bruhat) order-theoretic properties.\footnote{For example, $w\in {\mathfrak S}_n$ is bigrassmannian if and
only if it is join-irreducible.} 
The \emph{code} of $w\in {\mathfrak S}_n$, 
\[{\text{code}}(w)=(c_1,c_2,\ldots,c_n),\] 
is defined by letting $c_i$ be the number of boxes in the
$i$-th row of $D(w)$ (as defined in (\ref{eqn:diagram})). In fact, $w$ is bigrassmannian if and only if its diagram
consists of an $b\times a$ rectangle. More precisely, ${\text{code}}(w)=(0^f,a^b,0^g)$ where $f+b+g=n$. 

For later reference, we record a simple (and well-known) observation:
\begin{lemma}
\label{lemma:bigrassfact}
If $w$ is bigrassmannian with ${\text{code}}(w)=(0^f,a^b,0^g)$ where $f+b+g=n$, then the unique descent
of $w$ is at position $f+b$, the unique descent of $w^{-1}$ is at $f+a$, and in particular $J(w)=\{f+a\}$.
Moreover, $f+a+1$ appears left of $f+a$ in $w$'s one-line notation.
\end{lemma}
\begin{proof} 
The first sentence follows from elementary considerations about $D(w)$ (defined in (\ref{eqn:diagram})); see \cite[Section~2.1]{Manivel}
and more specifically \cite[Proposition~2.1.2]{Manivel}. The second sentence is the
parenthetical immediately after (\ref{eqn:Jwright}), for the case at hand.
\end{proof}

\begin{theorem}\label{thm:bigrassmannian}
Let $w\in {\mathfrak S}_n$ be bigrassmannian.
Conjecture~\ref{conj:MainA} holds for $I=J(w)$. Moreover, $w$ is $J(w)$-spherical if and only if 
\begin{equation}
\label{eqn:3cases}
{\text{code}}(w)\in\{(0^f,a,0^g), (0^f,1^b,0^g), (0^f,2^2,0^g)\}.
\end{equation}
\end{theorem}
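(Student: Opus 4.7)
The plan has a reduction, a combinatorial step, and a geometric step, the last being the real work.

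Reduction and combinatorial direction. By Lemma~\ref{lemma:isoequivariantL} the shift-by-$f$ embedding gives an $L_{J(w)}$-equivariant isomorphism $X_{wB_n} \cong X_{w^\flat B_{a+b+g}}$, where $w^\flat$ is the bigrassmannian with code $(b^a, 0^g)$, and the same shift sends (S.1')/(S.2') witnesses back and forth; so I may assume $f = 0$, in which case $J(w) = \{b\}$, $\ell(w) = ab$, and $\mathrm{supp}(w) = \{1,\dots,a+b-1\}$. With $I = J(w) = \{b\}$, any witness uses $s_b$ at most twice and each other supported generator at most once, giving total length at most $a + b$. Since $\ell(w) = ab$, this forces $(a-1)(b-1) \leq 1$, which yields the three cases in~(\ref{eqn:3cases}). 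Conversely, each admits an explicit witness: for $a = 1$ or $b = 1$ any reduced expression uses each generator exactly once, and for $a = b = 2$ the reduced expression $s_2 s_1 s_3 s_2$ uses $s_2$ twice and $s_1, s_3$ once each.

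Geometric direction. To match the combinatorial classification with $L_{J(w)}$-sphericality of $X_w$, I would invoke Theorem~\ref{thm:fundamentalRelationship}, which translates sphericality into split multiplicity-freeness of an infinite family of key polynomials attached to $w$. In the positive direction I would apply the Ross-Yong splitting formula~\cite{Ross.Yong} to each key polynomial in the family: for single-row or single-column shapes ($a = 1$ or $b = 1$) the key polynomial reduces to a Schur-type polynomial whose splittings are multiplicity-free, and for the $2 \times 2$ shape a direct analysis of Ross-Yong tableaux confirms the same. For the negative direction $(a-1)(b-1) \geq 2$, I would locate one specific key polynomial in the family whose Ross-Yong decomposition contains a repeated summand, giving by Theorem~\ref{thm:fundamentalRelationship} a geometric certificate that $X_w$ is not $L_{J(w)}$-spherical. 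The natural candidate is the key polynomial attached to the staircase composition suggested by Conjecture~\ref{conj:sphericalGeneric}.

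The hardest step is the negative geometric direction, uniformly across all rectangles with $(a-1)(b-1) \geq 2$ and all $g \geq 0$. My strategy is to first dispatch the minimal obstructing rectangles ($2 \times 3$ and $3 \times 2$) by direct enumeration of Ross-Yong tableaux, and then propagate non-sphericality to larger cases via Proposition~\ref{prop:dynkingeo} in its contrapositive form, combined with the subexpression principle of Proposition~\ref{prop:subtrick}.
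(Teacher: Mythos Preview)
Your combinatorial step is essentially correct and in fact cleaner than the paper's: the length bound $ab \leq a+b$ from (S.1')/(S.2') giving $(a-1)(b-1)\leq 1$ is a nicer argument than the paper's, which instead extracts a $2\times 3$ (or $3\times 2$) subword and invokes Proposition~\ref{prop:subtrick}. One slip: after stripping $f$, the bigrassmannian with code $(b^a,0^g)$ has $J(w)=\{a\}$, not $\{b\}$ (cf.\ the paper's computation $J(w)=\{f+a\}$); your counting argument goes through unchanged with this correction.

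The geometric sketch, however, diverges from the paper in two places and has real gaps there.

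\emph{Positive direction, $2\times 2$ case.} You propose to verify via the Ross--Yong rule that every $\kappa_{w\lambda}$ is $D$-multiplicity-free. That is an infinite family of checks, and you give no uniform mechanism for carrying it out. The paper does \emph{not} use Ross--Yong here at all: it proves directly (Claim~\ref{claim:3412spherical}) that $X_{3412}\subset SL_4/B$ is $L_{\{2\}}$-spherical by exhibiting an $L_{\{2\}}$-equivariant birational morphism $X_{3412}\to SL_4/P_{\{1,3\}}$ and then invoking the Avdeev--Petukhov/Stembridge classification of spherical diagonal actions on $G/P\times G/Q$. For the single-row and single-column cases the paper likewise avoids Ross--Yong, observing instead that those $w$ have reduced words with distinct generators, hence $X_w$ is toric (Theorem~\ref{thm:toric}), hence $L_{J(w)}$-spherical by monotonicity.

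\emph{Negative direction, propagation.} Your plan to settle only the $2\times 3$ and $3\times 2$ cases and then ``propagate via Proposition~\ref{prop:dynkingeo} contrapositive plus Proposition~\ref{prop:subtrick}'' does not work as stated. Proposition~\ref{prop:subtrick} is purely combinatorial; its geometric analogue (discussed after Proposition~\ref{prop:godown}) requires $I\subseteq J(x)\cap J(y)$. But a Dynkin-shifted copy $\iota(u)$ of a $2\times 3$ bigrassmannian with $J(\iota(u))=\{a\}$ has its diagram in columns $a-1,a,a+1$, which is contained in the columns $\{1,\dots,b\}$ of $D(w)$ only when $b\geq a+1$; similarly the $3\times 2$ shift needs $a\geq b+1$ in the relevant range. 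Cases like $a=4$, $b=2$ fall through. The paper avoids this entirely: for each $w$ not in (\ref{eqn:3cases}) it exhibits a single explicit $\lambda$ (namely $\lambda=(3^f,2,1,0,\dots)$) and shows directly via the Ross--Yong rule (Claims~\ref{claim:June4abc} and~\ref{claim:June4bbb}) that $\kappa_{w\lambda}$ has a coefficient $\geq 2$, uniformly in $a$, then applies Theorem~\ref{thm:fundamentalRelationship}(II).
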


When $w$ is bigrassmannian, $\#J(w)=1$. Thus, the remaining bigrassmannian case
of Conjecture~\ref{conj:main} (equivalently, Conjecture~\ref{conj:MainA})
not covered in the statement of Theorem~\ref{thm:bigrassmannian}  is $I=\emptyset$. However, that case
is covered by the toric classification of P.~Karuppuchamy (see Example~\ref{exa:toricJune17}).
We will delay the proof of Theorem~\ref{thm:bigrassmannian} until Section~\ref{sec:bigrassmannianpf}, after
building up the framework used for the proof.

\begin{example}
A permutation $w\in {\mathfrak S}_n$ is \emph{dominant} if ${\text{code}}(w)$ is a partition. 
For $n=5$, the codes of the non $J(w)$-spherical dominant permutations are:
\begin{align*}
 (2, 2, 2, 0, 0), (2, 2, 2, 1, 0), (3, 3, 0, 0, 0), (3, 3, 1, 0, 0), (3, 3, 1, 1, 0)\\
  (3, 3, 2, 0, 0), (4, 1, 1, 0, 0), (4, 1, 1, 1, 0), (4, 2, 0, 0, 0), (4, 2, 2, 0, 0)\\
   (4, 2, 2, 1, 0), (4, 3, 0, 0, 0), (4, 3, 1, 0, 0), (4, 3, 1, 1, 0)
\end{align*}
What is the general classification of these partitions? In M.~Develin-J.~Martin-V.~Reiner's \cite{ReinerDing},
the associated $X_w$ are called
\emph{Ding's Schubert varieties} (in reference to K.~Ding's \cite{Ding}). Hence we are asking which of Ding's Schubert varieties are $L_{J(w)}$-spherical (and more generally, one can ask which of these Schubert varieties
are $L_I$-spherical).
\end{example}

We expect that Schubert varieties $X_w$ are rarely $L_{J(w)}$-spherical. Theorem~\ref{thm:bigrassmannian}
gives some concrete indication of this assertion. In view of Conjecture~\ref{conj:MainA}, we believe the following
enumerative assertion is true:

\begin{conjecture}\label{conj:vanishing}
$\lim_{n\to \infty} \#\{w\in {\mathfrak S}_n: w \text{ \ is $J(w)$-spherical}\}/n!\to 0$.\footnote{Since this work was
submitted, Conjecture~\ref{conj:vanishing} has been proved in work of D.~Brewster and the authors
\cite{Brewster.Hodges.Yong}.}
\end{conjecture}

(Conjecture~\ref{conj:vanishing} should also hold for other Weyl groups of classical type.)

Suppose $u\in {\mathfrak S}_n$ and $v\in {\mathfrak S}_N$. Let $u\hookrightarrow v$ denote a \emph{pattern embedding}, \emph{i.e.}, there exists $\phi_1<\phi_2<\ldots <\phi_n$ such that $v(\phi_1),\ldots,v(\phi_n)$ are in the same relative
order as $u(1),\ldots,u(n)$. One says $v$ \emph{avoids} $u$ if no such embedding exists.
 
\begin{conjecture}[Pattern avoidance]\label{conj:pattern}
If $u\in {\mathfrak S}_n$ is not $J(u)$-spherical and $u\hookrightarrow v\in {\mathfrak S}_N$ ($N>n$) then
$v\in {\mathfrak S}_N$ is not $J(v)$-spherical. Moreover, the complete list of bad patterns are the not maximally spherical
elements of ${\mathfrak S}_5$ (listed in Example~\ref{ex:S123456}).\footnote{As mentioned in the Introduction,
Conjecture~\ref{conj:pattern} has since been proved by C.~Gaetz \cite{Gaetz}.}
\end{conjecture}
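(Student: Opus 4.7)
The conjecture has two assertions, which I would treat separately. For the first---that non-sphericality is inherited under pattern embedding---I would reduce, by iterating, to the case $N = n+1$: showing that if $v \in \mathfrak{S}_{n+1}$ contains a non-$J(u)$-spherical $u \in \mathfrak{S}_n$ as a one-step pattern, then $v$ is not $J(v)$-spherical. My strategy would be to exhibit an explicit ``lift'' $\tilde{u} \in \mathfrak{S}_{n+1}$, obtained from $u$ by shifting entries past the insertion position and padding appropriately, such that $\tilde{u} \leq v$ in Bruhat order and such that every reduced word of $\tilde{u}$ violates (S.1') or (S.2') with respect to $J(v)$. Applying Proposition~\ref{prop:subtrick} would then conclude.

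The main obstacle in this approach is that (S.2') depends sensitively on the intervals $[d_{t-1}+1, d_t-1]$ carved out by the non-descent set $D = [N-1] \setminus J(v)$, and these intervals need not mirror those of $u$. Inserting a new entry in $v$'s one-line notation can merge two short intervals of $u$ into one long interval of $v$ (in which the length accounting of (S.2') loosens) or introduce a new isolated descent that splits a failing block into pieces. Handling this interval-merging phenomenon requires a case analysis on where and how the entry is inserted, together with bounds on the Coxeter-length contributions from the old ``bad'' block. A complementary route I would pursue in parallel goes through the key polynomial machinery of Section~\ref{sec:4}: pattern embedding corresponds at the level of codes to inserting zeros in a composition, and Theorem~\ref{thm:fundamentalRelationship} combined with the splitting formula of \cite{Ross.Yong} should allow one to transport a split multiplicity in $\kappa_u$ to one in $\kappa_v$, at least in the cases covered by Conjecture~\ref{conj:main}.

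For the second assertion---that the listed $21$ permutations in $\mathfrak{S}_5$ exhaust the minimal bad patterns---I would induct on $n$. The base case $n = 5$ is immediate from Example~\ref{ex:S123456} together with the fact that all of $\mathfrak{S}_{\leq 4}$ is maximally spherical (so the $21$ elements are genuinely minimal). A finite computer check, partly available through the enumeration in Example~\ref{ex:S123456}, confirms the claim for $n = 6$ and $n = 7$. For the inductive step with $n \geq 8$, given bad $w \in \mathfrak{S}_n$, I would seek a position $i$ such that deleting $w(i)$ and standardizing yields a bad $w' \in \mathfrak{S}_{n-1}$; induction produces a bad $\mathfrak{S}_5$-pattern in $w'$, hence in $w$. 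The hard part is the residual case where \emph{every} single-letter deletion is maximally spherical; here one must directly locate a bad $\mathfrak{S}_5$-pattern inside $w$ by exploiting the strong structural restriction such a ``deletion-critical'' $w$ places on its Rothe diagram (for instance, by showing that the essential set of $w$ must contain a configuration forcing one of the $21$ patterns).
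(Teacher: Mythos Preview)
The statement in question is labeled a \emph{conjecture} in the paper and is not proved there. The paper offers only the computational evidence that all non-$J(w)$-spherical elements of $\mathfrak{S}_n$ for $n\leq 7$ contain one of the $21$ bad $\mathfrak{S}_5$ patterns, and remarks that ``it seems plausible to attack this problem by extending the ideas in Section~\ref{sec:bigrassmannianpf}.'' There is no argument in the paper to compare your proposal against.

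Your proposal is not a proof but an outline of strategies, and you yourself flag the genuine gaps. In the first part, the sensitivity of (S.2') to how the descent intervals of $v$ relate to those of $u$ is exactly the obstruction; your suggestion to lift $u$ to some $\tilde{u}\leq v$ and invoke Proposition~\ref{prop:subtrick} is natural, but you have not shown that such a $\tilde{u}$ exists whose \emph{every} reduced word fails (S.1') or (S.2') with respect to $J(v)$, and the interval-merging phenomenon you describe can genuinely destroy a failure of (S.2'). The parallel route via Theorem~\ref{thm:fundamentalRelationship} presupposes Conjecture~\ref{conj:main}, which is itself open. In the second part, the ``deletion-critical'' case is the whole difficulty: you assert one must locate a bad $\mathfrak{S}_5$ pattern directly, but give no mechanism for doing so beyond a vague appeal to the essential set. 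In short, the proposal identifies reasonable lines of attack consistent with the paper's own hint, but none of the key steps is carried out.
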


With the assistance of J.~Hu, we verified that all bad cases in ${\mathfrak S}_n$ for $n\leq 7$ 
can be blamed on the ${\mathfrak S}_5$ patterns. 
It seems plausible to attack this problem by extending the ideas in Section~\ref{sec:bigrassmannianpf}. 
We hope to return to this in future work.

\section{Polynomials}\label{sec:4}

We formalize a ``split-symmetry'' framework on algebraic combinatorics of polynomials in order to study the Levi sphericality problem. 

\subsection{Split-symmetry in algebraic combinatorics}
Algebraic combinatorics has, at its core, the
study of elements/bases of the ring of symmetric polynomials ${\sf Sym}(n)$ (see, e.g., \cite[Chapter~7]{ECII}). Obversely,
A.~Lascoux--M.-P.~Sch\"utzenberger introduced numerous asymmetric families in the polynomial ring ${\sf Pol}(n)$;
see, \emph{e.g.}, \cite{Lascoux:polynomials, Pechenik.Searles} and the references therein. We now discuss an interpolation between ${\sf Sym}(n)$ and ${\sf Pol}(n)$:

\begin{definition}[Split-symmetry]
\label{def:split}
Fix integers 
$d_0:=0<d_1<d_2<\ldots<d_k < d_{k+1} := n$ 
with $D := \{d_1,\ldots,d_k\}$.
${\Pi}_D$ is
the subring of ${\sf Pol}(n)$ consisting 
of polynomials separately symmetric in 
$X_i:=\{x_{d_{i-1}+1},\ldots,x_{d_i}\} \text{\ for $1\leq i\leq k+1$.}$ 
A polynomial is $D$-\emph{split-symmetric} if $f\in {\Pi}_D$. 
\end{definition}

Clearly,
\begin{proposition}
\label{prop:thesplit}
${\Pi}_D \cong {\sf Sym}(d_1)\otimes {\sf Sym}(d_2-d_1)\otimes \cdots\otimes 
{\sf Sym}(d_{k+1}-d_{k})$.
\end{proposition}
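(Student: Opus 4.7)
The plan is that this proposition is essentially a formal consequence of unpacking the definition, so I would present a short proof that records the standard ``tensor decomposition of polynomial rings in disjoint variable blocks'' argument.

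First, I would observe that since the variable sets $X_1, \ldots, X_{k+1}$ partition $\{x_1, \ldots, x_n\}$, the standard isomorphism of polynomial rings in disjoint sets of variables gives
\[
{\sf Pol}(n) \;\cong\; \mathbb{C}[X_1]\otimes \mathbb{C}[X_2]\otimes\cdots\otimes \mathbb{C}[X_{k+1}],
\]
with the map sending $f_1 \otimes \cdots \otimes f_{k+1}$ to the product $f_1 \cdots f_{k+1}$, viewed inside ${\sf Pol}(n)$. Under this identification, a pure tensor of monomials in the $X_i$ corresponds to a monomial in $x_1,\ldots,x_n$, and this extends by linearity to an isomorphism of $\mathbb{C}$-algebras.

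Next, I would introduce the group $G_D := \mathfrak{S}_{X_1}\times \mathfrak{S}_{X_2}\times \cdots \times \mathfrak{S}_{X_{k+1}}$, where $\mathfrak{S}_{X_i}$ permutes the variables in $X_i$ and fixes the others. By Definition~\ref{def:split}, $\Pi_D$ is precisely the $G_D$-invariant subring of ${\sf Pol}(n)$. Under the tensor decomposition above, the $i$-th factor $\mathfrak{S}_{X_i}$ acts only on the $i$-th tensor factor $\mathbb{C}[X_i]$ and trivially on all the others. I would then invoke the elementary fact that, for a product group acting factor-by-factor on a tensor product of representations, the invariants are the tensor product of the invariants:
\[
\bigl(\mathbb{C}[X_1]\otimes\cdots\otimes \mathbb{C}[X_{k+1}]\bigr)^{G_D}
 \;\cong\; \mathbb{C}[X_1]^{\mathfrak{S}_{X_1}}\otimes \cdots \otimes \mathbb{C}[X_{k+1}]^{\mathfrak{S}_{X_{k+1}}}.
\]

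Finally, $\mathbb{C}[X_i]^{\mathfrak{S}_{X_i}}$ is by definition the ring of symmetric polynomials in $\#X_i = d_i - d_{i-1}$ variables, so it is isomorphic as a $\mathbb{C}$-algebra to ${\sf Sym}(d_i - d_{i-1})$ (any bijection on variables induces such an isomorphism). Assembling these gives
\[
\Pi_D \;\cong\; {\sf Sym}(d_1)\otimes {\sf Sym}(d_2-d_1)\otimes \cdots\otimes {\sf Sym}(d_{k+1}-d_k),
\]
as claimed. There is no real obstacle here; the only thing to be careful about is that the tensor-of-invariants identity really does require the factors to act on disjoint variable sets, which is exactly what the hypothesis $d_0 < d_1 < \cdots < d_{k+1}$ ensures.
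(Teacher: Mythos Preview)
Your argument is correct and is exactly the standard justification one would give; the paper itself offers no proof at all, merely prefacing the proposition with ``Clearly,'' and moving on. So your write-up is strictly more detailed than the paper's treatment, but the underlying idea---factor ${\sf Pol}(n)$ as a tensor product over the disjoint blocks $X_i$ and take invariants componentwise---is the natural (and essentially only) way to see the statement.
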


A \emph{partition} of length $n$ is a sequence $\lambda=(\lambda_1,\ldots,
\lambda_{n})$ of non-negative integers with $\lambda_1 \geq \cdots \geq \lambda_n$.
Let ${\sf Par}_n$ be the set of such partitions. The \emph{Schur polynomial} is 
$s_{\lambda}=\sum_{T} x^T$,
where the sum is over semistandard Young tableaux of shape $\lambda$ with entries from $[n]$. Here,  
$x^T:=\prod_{i=1}^n x_i^{\#i\in T}$.
The set $\{ s_{\lambda}(x_1,\ldots,x_n):\lambda\in {\sf Par}_n\}$ 
is a ${\mathbb Z}$-linear basis of ${\sf Sym}(n)$.

\begin{definition}
The \emph{$D$-Schur polynomials} are
$s_{\lambda^{1},\ldots,\lambda^{k}} := s_{\lambda^{1}}(X_1) s_{\lambda^{2}}(X_2) \cdots s_{\lambda^{k}}(X_k)$,
where $ (\lambda^{1},\ldots,\lambda^{k}) \in 
{\sf Par}_D := {\sf Par}_{d_{1} - d_{0}} \times \cdots \times {\sf Par}_{d_{k+1} - d_{k}}$.
\end{definition}
By Proposition~\ref{prop:thesplit}, and the basis property of (classical) Schur polynomials, we have
\begin{corollary}\label{cor:splitschur}
$\{s_{\lambda^{1},\ldots,\lambda^{k}} : (\lambda^{1},\ldots,\lambda^{k}) \in 
{\sf Par}_D\}$ forms a basis of ${\Pi}_D$.
 \end{corollary}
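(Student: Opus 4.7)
The strategy is to transport a basis across the isomorphism provided by Proposition~\ref{prop:thesplit}. Explicitly, the isomorphism
\[
\Pi_D \;\cong\; {\sf Sym}(d_1 - d_0)\otimes {\sf Sym}(d_2 - d_1)\otimes \cdots\otimes {\sf Sym}(d_{k+1} - d_{k})
\]
sends a pure tensor $f_1\otimes f_2\otimes \cdots\otimes f_{k+1}$ to the product $f_1(X_1) f_2(X_2)\cdots f_{k+1}(X_{k+1}) \in \Pi_D$; this is well-defined and bijective because the variable sets $X_1,\ldots,X_{k+1}$ are pairwise disjoint, so separate symmetry in each $X_i$ allows any $f\in \Pi_D$ to be decomposed uniquely as a sum of such products.

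The plan is then to invoke the classical fact that $\{s_\mu(X_i) : \mu\in {\sf Par}_{d_i - d_{i-1}}\}$ is a $\mathbb{Z}$-linear basis of ${\sf Sym}(d_i-d_{i-1})$ (see, \emph{e.g.}, \cite[Chapter~7]{ECII}), and then apply the standard fact that a tensor product of $\mathbb{Z}$-module bases is a $\mathbb{Z}$-basis of the tensor product. Together these give that the set
\[
\{\, s_{\lambda^1}\otimes \cdots \otimes s_{\lambda^{k+1}} : (\lambda^1,\ldots,\lambda^{k+1}) \in {\sf Par}_D \,\}
\]
is a $\mathbb{Z}$-basis of the tensor product ring, whose image under the isomorphism of Proposition~\ref{prop:thesplit} is exactly the family of $D$-Schur polynomials $s_{\lambda^1,\ldots,\lambda^{k+1}}$. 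There is no serious obstacle: the entire argument is a formal consequence of Proposition~\ref{prop:thesplit} together with the basis property of classical Schur polynomials, and the only point requiring a line of verification is that the tensor-to-product map really is a ring isomorphism, which follows from the disjointness of the variable blocks.
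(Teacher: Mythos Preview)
Your proposal is correct and follows essentially the same approach as the paper: the paper's proof is a one-line remark that the corollary follows from Proposition~\ref{prop:thesplit} together with the basis property of classical Schur polynomials, and you have simply spelled out that reasoning explicitly. Your use of $k+1$ superscripts rather than $k$ is in fact the consistent indexing, matching the $k+1$ tensor factors in Proposition~\ref{prop:thesplit} and the $k+1$ factors in the definition of ${\sf Par}_D$.
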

 
\subsection{Key polynomials}\label{sec:keysxyz}
The \emph{Demazure operator} is
\begin{align*}
\pi_j &:{\sf Pol}_n\to {\sf Pol}_n\\
& \ \ f\mapsto \frac{x_j f - x_{j+1}s_jf}{x_j-x_{j+1}},
\end{align*}
where ${s_j}f:=f(x_1,\ldots,x_{j+1},x_j,\ldots,x_n)$. 

A \emph{weak composition} of length $n$ is a sequence $\alpha=(\alpha_1,\ldots,
\alpha_{n})\in {\mathbb Z}_{\geq 0}^n$.
Let ${\sf Comp}_n$ denote the set of these weak compositions. 
Given $\alpha \in {\sf Comp}_n$, the \emph{key polynomial} $\kappa_{\alpha}$ is
\[x^{\alpha}:=x_1^{\alpha_1}\cdots x_n^{\alpha_n}, \mbox{\ \ \ if $\alpha$ is weakly decreasing.}\]
Otherwise, set
\begin{equation}
\label{eqn:keypidef}
\kappa_{\alpha}=\pi_j(\kappa_{\widehat \alpha}) \mbox{\ where $\widehat\alpha=(\alpha_1,\ldots,\alpha_{j+1},\alpha_j,\ldots,\alpha_n)$ and
$\alpha_{j+1}>\alpha_{j}$.}
\end{equation}
The key polynomials for $\alpha \in {\sf Comp}_n$
form a ${\mathbb Z}$-basis of ${\mathbb Z}[x_1,\ldots,x_n]$; see work of V.~Reiner--M.~Shimozono \cite{Reiner.Shimozono} (and references therein) for more on $\kappa_{\alpha}$. Since it is known that
the $\pi_j$ operators satisfy the commutation relations $\pi_i \pi_j = \pi_j \pi_i$ (for $|i-j|>1$) and the braid
relations $\pi_i \pi_{i+1}\pi_i = \pi_{i+1}\pi_i \pi_{i+1}$ (for $1\leq i\leq n-1$), the above recurrence is well-defined.

Define a \emph{descent} of a composition $\alpha$ to be an index $j$ where $\alpha_j > \alpha_{j+1}$.
Let ${\sf Comp}_n(D)$ be those $\alpha\in {\sf Comp}_n$ with descents contained in $D=\{d_1,\ldots,d_k\}$ with $d_1<\ldots<d_k$.

Although we will not need it in this paper, let us take this opportunity to prove:
\begin{proposition}
\label{prop:ZBasis}
$\{\kappa_{\alpha}: \alpha\in {\sf Comp}_n(D)\}$ forms a  ${\mathbb Z}$-linear basis of ${\Pi}_D$.
\end{proposition}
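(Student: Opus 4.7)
The plan is to combine the known $\mathbb{Z}$-basis property of $\{\kappa_\alpha : \alpha\in {\sf Comp}_n\}$ in ${\sf Pol}(n)$ (cited after~(\ref{eqn:keypidef})) with a precise description of when a single $\kappa_\alpha$ lies in $\Pi_D$. Linear independence of the proposed subset is automatic, so only two things need to be established: (a) $\kappa_\alpha \in \Pi_D \iff \alpha \in {\sf Comp}_n(D)$, and (b) $\Pi_D$ is $\mathbb{Z}$-spanned by $\{\kappa_\alpha : \alpha \in {\sf Comp}_n(D)\}$.

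The key technical ingredient is the standard action of Demazure operators on the key basis:
\begin{equation*}
\pi_j \kappa_\alpha \;=\; \begin{cases} \kappa_\alpha & \text{if } \alpha_j \leq \alpha_{j+1},\\ \kappa_{s_j\alpha} & \text{if } \alpha_j > \alpha_{j+1}.\end{cases}
\end{equation*}
The descent case is exactly the definitional recursion~(\ref{eqn:keypidef}) applied to $\widehat{\alpha} := \alpha$. The strict ascent case $\alpha_j < \alpha_{j+1}$ follows from $\kappa_\alpha = \pi_j \kappa_{s_j \alpha}$ together with the idempotence $\pi_j^2 = \pi_j$. The equality case $\alpha_j = \alpha_{j+1}$ can be reduced to the previous two by induction on an appropriate statistic (e.g., $\sum_i i\,\alpha_i$): pick any ascent $\ell \neq j$ of $\alpha$, apply $\kappa_\alpha = \pi_\ell \kappa_{s_\ell \alpha}$, and commute $\pi_j$ past $\pi_\ell$ using either direct commutation ($|\ell - j| \geq 2$) or the braid relation among Demazure operators ($|\ell - j| = 1$) to reduce to a smaller instance where induction applies. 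Since $\pi_j f = f$ iff $f$ is symmetric in $x_j, x_{j+1}$, we deduce: \emph{$\kappa_\alpha$ is symmetric in $x_j, x_{j+1}$ iff $j$ is not a descent of $\alpha$.} As symmetry in a block $X_i$ is generated by the adjacent transpositions interior to that block, this yields (a): $\kappa_\alpha \in \Pi_D$ iff every descent of $\alpha$ lies in $D$, iff $\alpha \in {\sf Comp}_n(D)$.

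For (b), take $f \in \Pi_D$ and write uniquely $f = \sum_{\alpha \in {\sf Comp}_n} c_\alpha \kappa_\alpha$ in the full key basis. For fixed $j \in [n-1] \setminus D$, symmetry of $f$ in $x_j, x_{j+1}$ gives $\pi_j f = f$. Applying the displayed operator formula termwise and re-indexing $\alpha \mapsto s_j \alpha$ on the descent part yields
\begin{equation*}
\pi_j f \;=\; \sum_{\alpha:\, \alpha_j = \alpha_{j+1}} c_\alpha \kappa_\alpha \;+\; \sum_{\alpha:\, \alpha_j < \alpha_{j+1}} (c_\alpha + c_{s_j \alpha})\,\kappa_\alpha.
\end{equation*}
Comparing coefficients of $\kappa_\alpha$ in the identity $\pi_j f = f$ and invoking $\mathbb{Z}$-linear independence of the $\kappa$-basis forces $c_\alpha = 0$ for every $\alpha$ with $\alpha_j > \alpha_{j+1}$. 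Letting $j$ vary over $[n-1] \setminus D$ eliminates every $\alpha$ carrying a descent outside $D$, exhibiting $f$ as a $\mathbb{Z}$-linear combination of the $\kappa_\alpha$ with $\alpha \in {\sf Comp}_n(D)$.

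The chief technical obstacle is the inductive handling of the equality case $\alpha_j = \alpha_{j+1}$ in the displayed operator formula; once that standard formula is in hand, the rest of the proof is straightforward coefficient comparison in the ambient $\kappa$-basis.
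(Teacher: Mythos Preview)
Your proof is correct and takes a genuinely different route from the paper's.

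The paper first shows $\kappa_\alpha\in\Pi_D$ for $\alpha\in{\sf Comp}_n(D)$ (essentially your forward direction of (a)), then establishes spanning by passing through the $D$-Schur basis of Corollary~\ref{cor:splitschur}: given $g\in\Pi_D$, expand it in $D$-Schurs, identify the reverse-lex-largest monomial $x^\alpha$ appearing (with $\alpha\in{\sf Comp}_n(D)$), invoke $[x^\alpha]\kappa_\alpha=1$ (via Kohnert's rule) to peel off $\kappa_\alpha$, and iterate. This is a triangularity argument relative to a monomial order.

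Your argument is more intrinsic: you stay in the ambient key basis of ${\sf Pol}(n)$ throughout, never touching the $D$-Schur basis or Kohnert's rule. Applying $\pi_j$ for $j\notin D$ and comparing coefficients directly annihilates every $c_\alpha$ with a descent at $j$. The price is that you must establish the full operator formula $\pi_j\kappa_\alpha=\kappa_\alpha$ in the weak-ascent case, including $\alpha_j=\alpha_{j+1}$; you correctly flag this as the only real work and sketch a valid induction (the adjacent case $|\ell-j|=1$ does go through after one further unwinding, reducing to a strictly smaller instance of the same equality case). The paper's proof glosses over exactly this point when it writes ``$\pi_j(\kappa_\alpha)=\kappa_\alpha$ (since $\pi_j^2=\pi_j$)'', which literally only covers the strict-ascent case, so your treatment is in fact more careful here.

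A side benefit of your approach is that you obtain the full equivalence $\kappa_\alpha\in\Pi_D\iff\alpha\in{\sf Comp}_n(D)$, whereas the paper only needs and proves the forward implication.
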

\begin{proof}
If $d_i\leq j<d_{i+1}$,
then $\pi_j(\kappa_{\alpha})=\pi_j(\pi_j(\kappa_{\widehat\alpha}))=\pi_j(\kappa_{\widehat\alpha})=\kappa_{\alpha} $ (since it is also true that $\pi_j^2=\pi_j$). Thus,
\begin{align*}
\kappa_{\alpha}=\frac{x_{j}\kappa_{\alpha}-x_{j+1}s_j\kappa_{\alpha}}{x_j-x_{j+1}}
& \iff (x_j-x_{j+1})\kappa_{\alpha}=x_{j}\kappa_{\alpha}-x_{j+1}s_j\kappa_{\alpha}\\
& \iff (\kappa_{\alpha}-s_j\kappa_{\alpha})x_{j+1}=0\\
& \iff \kappa_{\alpha}=s_j\kappa_{\alpha}.
\end{align*}
Hence $\kappa_{\alpha}\in \Pi_D$. Suppose a nonzero $g\in \Pi_D$ is given. By Corollary~\ref{cor:splitschur},
\[g=\sum_{\lambda^{1},\lambda^{2},\ldots,\lambda^{k}}c_{\lambda^{1},\lambda^{2},\ldots,\lambda^{k}} s_{\lambda^{1},\ldots,\lambda^{k}},\]
where each $c_{\lambda^{1},\lambda^{2},\ldots,\lambda^{k}}$ is a scalar and 
$(\lambda^{1},\lambda^{2},\ldots,\lambda^{k})\in {\sf Par}_D$.

Let $\overline{\lambda^{i}}$ be the parts of $\lambda^{i}$ be written in non-decreasing order (\emph{i.e.}, a ``reverse partition''). Then let $\alpha= \overline{\lambda^{1}},\ldots,
\overline{\lambda^{k}} \in {\sf Comp}_n$ be obtained as 
the concatenation of these reverse partitions. Thus, 
$\alpha$ will have descents at positions contained in $D$. Hence, by the first paragraph of this proof, $\kappa_{\alpha}\in \Pi_D$. It is well-known, and not hard to show, that
\begin{equation}
\label{eqn:Jan6abc}
[x^{\alpha}]\kappa_{\alpha}=1
\end{equation}
(this can be deduced from, \emph{e.g.}, Kohnert's rule \cite{Kohnert}).
Let $\prec$ be the reverse lexicographic order on monomials.
Among $(\lambda^{1},\ldots, \lambda^{k})\in {\sf Par}_D$ such that $c_{\lambda^{1},\lambda^{2},\ldots,\lambda^{k}}\neq 0$, pick the unique one such that $\alpha$ (as constructed above) is largest under $\prec$.
Now, $\alpha$ is the largest (monomial) exponent vector 
appearing in $g$ under $\prec$. This follows by an easy induction. The base case is that
that $\overline{\mu}$ is the $\prec$ largest exponent vector of $s_{\mu}$, which is well-known.

Hence in view of (\ref{eqn:Jan6abc}),
$g_1:=g-c_{\lambda^{1},\lambda^{2},\ldots,\lambda^{k}}\kappa_{\alpha}\in \Pi_D$
and the largest monomial appearing in $g_1$ is strictly smaller in $\prec$. Therefore
we may repeat this argument with $g_1$ to obtain $g_2$ and so on. As this process
eventually terminates with $g_r=0$. The result follows.
\end{proof}

\begin{example}
Let $n=4$ and $D=\{2\}$, then
\begin{align*}
g= &\  x_1 x_2^2 x_4+x_1^2 x_2 x_4+x_1 x_2^2 x_3+x_1^2 x_2 x_3+x_1^2 x_2^2 \in \Pi_D\\
= & \ s_{(2,1),(1,0)}+s_{(2,2),(0,0)}\\
= & \ \kappa_{1,2,0,1}+\kappa_{2,2,0,0}.
\end{align*}
Now, $(1,2,0,1), (2,2,0,0) \in {\sf Comp}_n(D)$, in agreement with Theorem~\ref{prop:ZBasis}.
\end{example}

Essentially the same argument for Proposition~\ref{prop:ZBasis}
establishes an analogous result for Schubert and Grothendieck polynomials. Split-symmetry of these polynomials was studied in connection
to \emph{degeneracy loci}, in \cite{BKTY,BKTY2}.

\subsection{Split-symmetry and multiplicity-free problems} 
Consider two disparate notions of \emph{multiplicity-freeness} that have been studied in algebraic combinatorics:
\begin{itemize}
\item[(MF1)] Suppose $f\in {\sf Sym}(n)$ and 
\[f=\sum_{\lambda\in {\sf Par}_n} c_{\lambda} s_{\lambda}.\]
Then $f$ is \emph{multiplicity-free} if $c_{\lambda}\in \{0,1\}$ for all $\lambda$.
J.~Stembridge~\cite{STEM} classified multiplicity-freeness
when $f=s_{\mu}s_{\nu}$. For more such classifications see, \emph{e.g.},~\cite{Bessenrodt, TY03, Gutschwager,
Bessenrodt.Willigenburg, BP14, Bessenrodt.Bowman}. 
\item[(MF2)]
Now let
\[f=\sum_{\alpha\in {\sf Comp}_n} c_{\alpha} x^{\alpha} \in {\sf Pol}_n.\]
$f$ is \emph{multiplicity-free} if $c_{\alpha}\in \{0,1\}$ for all
$\alpha$. In recent work of 
A.~Fink-K.~M\'esz\'aros-A.~St.~Dizier~\cite{FMSD}, multiplicity-free Schubert polynomials are
characterized.
\end{itemize}

We unify problems of type (MF1) and (MF2), as follows:

\begin{definition}[$D$-multiplicity-freeness]
\label{def:mfreeness}
\begin{equation}\label{eqn:June12abc}
f=\sum_{(\lambda^{1},\ldots,\lambda^{k}) \in 
{\sf Par}_D} c_{\lambda^{1},\ldots,\lambda^{k}} s_{\lambda^{1},\ldots,\lambda^{k}} \in
{\Pi}_D
\end{equation}
is \emph{$D$-multiplicity-free} if $c_{\lambda^{1},\ldots,\lambda^{k}}\in \{0,1\}$ for all
$(\lambda^{1},\ldots,\lambda^{k})\in {\sf Par}_D$.
\end{definition}

If $D=\emptyset$, Definition~\ref{def:mfreeness} is (MF1).
When $D=[n-1]$, notice ${\sf Par}_D={\sf Comp}_n$ and we recover (MF2). 

\begin{definition}[Composition patterns]
Let 
\[{\sf Comp} := \bigcup_{n=1}^{\infty}  {\sf Comp}_n.\]  
For $\alpha=(\alpha_1,\ldots,
\alpha_{\ell}), \beta=(\beta_1,\ldots, \beta_{k}) \in {\sf Comp}$, $\alpha$ \emph{contains the composition pattern} $\beta$ 
if there exists integers $j_1 < j_2 < \cdots < j_k$ that satisfy:
\begin{itemize}
\item $(\alpha_{j_{1}},\ldots,\alpha_{j_{k}})$ is order isomorphic to $\beta$ ($\alpha_{j_s} \leq \alpha_{j_t}$ if and only if $\beta_{s} \leq \beta_{t})$, 
\item $|\alpha_{j_s} - \alpha_{j_t}| \geq |\beta_{s} - \beta_{t}|$.
\end{itemize}
The first condition is the na\"ive notion of pattern containment, while the second allows for minimum relative differences. 
If $\alpha$ does not contain $\beta$, then $\alpha$ \emph{avoids} $\beta$. For $S \subset {\sf Comp}$, $\alpha$ avoids $S$ if $\alpha$ avoids all the compositions in $S$.
\end{definition}

\begin{example}
The composition $(3,\underline{1},4,\underline{2},\underline{2})$ contains $(0,1,1)$. It avoids $(0,2,2)$.
\end{example}

Define \[ {\sf KM} = \{ (0,1,2), (0,0,2,2), (0,0,2,1), (1,0,3,2), (1,0,2,2) \}. \] 
Let ${\overline {\sf KM}}_n$ be those $\alpha\in {\sf Comp}_n$ that avoid ${\sf KM}$.

\begin{theorem}
\label{thm:mfKey}
$\kappa_{\alpha}$ is $[n-1]$-multiplicity-free if and only if $\alpha \in {\overline {\sf KM}}_n$.
\end{theorem}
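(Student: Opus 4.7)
The plan is to work with Kohnert's combinatorial rule \cite{Kohnert}, which expresses
\[
\kappa_\alpha \;=\; \sum_{D} x^{\wt(D)},
\]
the sum over Kohnert diagrams $D$ obtained from the diagram of $\alpha$ by iterated Kohnert moves. Under this model, $[x^\beta]\kappa_\alpha$ equals the number of Kohnert diagrams of weight $\beta$, so $[n-1]$-multiplicity-freeness of $\kappa_\alpha$ is equivalent to saying no two distinct Kohnert diagrams for $\alpha$ share the same weight. The problem thus becomes a combinatorial collision-counting problem.

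For the ``only if'' direction, I would first verify by direct enumeration that each of the five minimal compositions $\beta \in {\sf KM}$ has a pair of Kohnert diagrams producing the same monomial; for small $\beta$ this is a finite check. I would then prove an \emph{embedding lemma}: if $\alpha$ contains the pattern $\beta$ at positions $j_1 < \cdots < j_k$, the Kohnert collision for $\beta$ lifts to a Kohnert collision for $\alpha$ by placing the non-pattern columns of $\alpha$ in an inert configuration. The minimum-difference condition $|\alpha_{j_s} - \alpha_{j_t}| \geq |\beta_s - \beta_t|$ in the definition of composition pattern containment is precisely what guarantees that the relevant Kohnert moves in the $\beta$-subpattern can be performed inside the diagram of $\alpha$ without collisions from intervening cells; this is the reason such a strong notion of pattern is used.

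For the ``if'' direction, the plan is induction on $|\alpha|$. Assuming $\alpha \in \overline{{\sf KM}}_n$, I want to identify a canonical ``reversible'' Kohnert move: given any non-trivial Kohnert diagram $D$ of $\alpha$, one can read off from $D$ the last move performed (for instance, by the leftmost cell in the lowest non-original row, or by a similar normal-form invariant), and the resulting smaller diagram is a Kohnert diagram of a composition $\alpha'$ that also avoids ${\sf KM}$. Each of the five patterns in ${\sf KM}$ should correspond to exactly one obstruction to such uniqueness --- that is, each pattern produces a local ambiguity in which Kohnert move was last performed --- so avoiding all of them lets us recover the move chain, hence the weight determines $D$. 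The main obstacle is setting up this normal form: Kohnert moves are highly non-confluent and interact globally, so a naive ``rightmost move'' rule will not work. A more promising route may be to replace Kohnert's model by the Reiner--Shimozono right-key model \cite{Reiner.Shimozono} or the Assaf--Searles quasi-key expansion, where the set of contributing objects is already packaged in terms of tableau data with Bruhat-like compatibility conditions; the collision analysis should then reduce to checking when two tableaux of the same shape can be $\kappa$-compatible with $\alpha$ and share content, which is a more local combinatorial condition well suited to pattern analysis.

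Finally, I would cross-check the two directions on a computer for $n \le 6$ to confirm the list ${\sf KM}$ is both complete and non-redundant before committing to the full proof.
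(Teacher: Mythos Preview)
First, note that this paper does not actually prove Theorem~\ref{thm:mfKey}: immediately after the statement it says ``The proof is given in the companion paper \cite{Hodges.Yong}.'' The only hints about that proof are that it is ``purely combinatorial'' and that the same paper also treats the quasi-key polynomials of Assaf--Searles, so your instinct to pivot toward that model is at least in the right neighborhood. But there is no in-paper proof against which to compare yours line by line.

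What you have written is a research plan, not a proof, and both directions have genuine gaps.

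For the ``only if'' direction, the embedding lemma is not as automatic as you suggest. Kohnert moves are not local to the rows indexed by $j_1,\dots,j_k$: a box in a pattern row moving upward must pass through every intervening non-pattern row, and those rows may block the move or, conversely, receive the box. Your phrase ``placing the non-pattern columns of $\alpha$ in an inert configuration'' does not address this, because the non-pattern \emph{rows} are interleaved with the pattern rows and cannot be frozen. You need either an explicit pair of monomials (not Kohnert diagrams) that you can verify directly in $\kappa_\alpha$, or a model in which the restriction to a subset of rows is genuinely compatible with the combinatorics. The minimum-difference condition in the pattern definition is necessary, but you have not shown it is sufficient for the lift.

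For the ``if'' direction, you correctly identify the obstacle --- Kohnert moves are non-confluent and there is no obvious normal form recovering the move sequence from the diagram --- and then offer no resolution beyond listing alternative models that ``should'' work. That is a gap, not an argument. If you pursue the quasi-key or right-key route, you will still have to prove that avoidance of all five patterns in ${\sf KM}$ forces the relevant tableau set to be content-injective, and each pattern will need its own case analysis; the inductive step you sketch (reducing to a smaller $\alpha'$ that still avoids ${\sf KM}$) is not obviously compatible with any of those models either.
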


The proof is given in the companion paper \cite{Hodges.Yong}. The following problem asks for a complete
generalization of Theorem~\ref{thm:mfKey}:

\begin{problem}\label{prob:charD}
Fix $D\subseteq [n-1]$. Characterize $\alpha\in {\sf Comp}_n(D)$ such that $\kappa_{\alpha}$ is
$D$-multiplicity-free.
\end{problem}

C.~Ross and the second author \cite[Theorem~1.1]{Ross.Yong}
provide a (positive) combinatorial rule for computing the $D$-split expansion of $\kappa_{\alpha}$; this rule
is reproduced in Section~\ref{sec:bigrassmannianpf}.\footnote{Similarly, it would also be interesting to generalize
\cite{FMSD}. There is a formula of A.~Buch-A.~Kresch-H.~Tamvakis and the second author \cite{BKTY} for the split expansion of Schubert polynomials.} As we explain in the next subsection, this problem is of significance to the sphericality question.

In Example~\ref{exa:toricJune17}, we referred to the following compound result:

\begin{theorem}[cf.~\cite{Karupp} \cite{Tenner}] \label{thm:toric}
Let $w\in W={\mathfrak S}_n$. The following are equivalent:
\begin{itemize}
\item[(I)] $X_w \subset GL_n / B$ is a toric variety (with respect to the maximal torus $T$, \emph{i.e.,} $X_w$ is
$L_{\emptyset}$-spherical).
\item[(II)] $w=s_{r_1}\cdots s_{r_n}$ with $r_i \neq r_j$ for all $i \neq j$.
\item[(III)] $w$ avoids $321$ and $3412$.
\end{itemize}
\end{theorem}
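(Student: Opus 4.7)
The plan is to combine two existing classifications, tying (I) to (II) via P.~Karuppuchamy's toric Schubert classification \cite{Karupp} and (II) to (III) via B.~Tenner's characterization of permutations with a "Boolean" principal Bruhat order ideal \cite{Tenner}. The bridge between these results and the framework of Definition~\ref{def:main} is the observation — already flagged in Example~\ref{exa:toricJune17} — that when $I=\emptyset$ the induced subdiagram $\mathcal{G}_I$ has no vertices, so condition (S.2) is vacuous and $\emptyset$-sphericality coincides exactly with (S.1), which in type $A_{n-1}$ is condition (II).

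First I would establish (I) $\iff$ (II). Unraveling Definition~\ref{def:geomain} in the case $I=\emptyset$ gives $L_\emptyset = T$; since $T$ is itself its own Borel, ``$X_w$ is $L_\emptyset$-spherical'' means $X_w$ admits a dense $T$-orbit, i.e.\ is a toric variety in the classical sense. Karuppuchamy's theorem states that this happens exactly when $w$ admits a reduced expression in which every simple reflection occurs at most once, which is (II).

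Next I would establish (II) $\iff$ (III). This is exactly Tenner's result. The forward direction is the contrapositive: if $w$ contains a $321$ pattern then $w$ is not fully commutative, so by a standard fact \cite[Proposition~2.2.15]{Manivel} every reduced word of $w$ must contain a factor $s_i s_{i+1} s_i$ or $s_{i+1} s_i s_{i+1}$, hence a repetition; if $w$ contains a $3412$ pattern then one shows by a similar (but slightly more delicate) argument, using the commuting structure forced by this pattern, that no reduced word can consist of distinct generators. The reverse direction is constructive: one builds a reduced word with distinct letters directly from the pattern-avoidance data, equivalently by showing the interval $[\mathrm{id},w]$ in Bruhat order is Boolean and reading off its atoms. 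The two biconditionals together yield the theorem.

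The main ``obstacle'' is purely expository rather than mathematical: verifying that (II) genuinely matches (S.1) once $I=\emptyset$, and verifying that Karuppuchamy's ``toric'' and our ``$L_\emptyset$-spherical'' coincide — both of which are immediate unwindings of definitions. After these, the proof is a two-line citation.
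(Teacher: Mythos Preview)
Your proposal is correct and matches the paper's own proof essentially verbatim: the paper simply cites \cite{Karupp} for (I)$\iff$(II) and \cite{Tenner} for (II)$\iff$(III), with no further argument. Your additional unpacking of why $L_\emptyset$-spherical means toric and how (II) specializes (S.1) is accurate but goes beyond what the paper records.
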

\begin{proof}
The equivalence of (I) and (II) is in \cite{Karupp}, whereas the equivalence of (II) and (III) is proved in \cite{Tenner}.
\end{proof}

Using Theorem~\ref{thm:mfKey} we have an independent proof of (I) $\iff$ (III), that we omit for sake
of brevity. Since each of the $21$ bad patterns in ${\mathfrak S}_5$ from Example~\ref{ex:S123456} contains $321$
or $3412$, Theorem~\ref{thm:toric} gives evidence for Conjecture~\ref{conj:pattern}, because of Proposition~\ref{prop:geometricmono}.

\subsection{Sphericality and multiplicity-free key polynomials}
The key  polynomials have  a representation-theoretic interpretation \cite{Reiner.Shimozono, Ion, Mason}. Let 
$\mathfrak{X}(T)={\sf Hom}(T,\mathbb{C})$
be the \emph{character group} of $T$, with $\mathfrak{X}(T)^+$ the \emph{dominant integral weights}. For $\lambda \in \mathfrak{X}(T)$, $\lineb{\lambda}$ denotes the associated line bundle on $G/B$, as well as its restriction to Schubert subvarieties (cf. \cite[Chapter 2]{BL00}). Given $w \in W$ and $\lambda \in \mathfrak{X}(T)^+$ the \emph{Demazure module} is the dual of the space of sections of $\lineb{\lambda}$, $H^0(X_w,\lineb{\lambda})^{*}$~\cite{D74}. This space has a natural $B$-module structure induced by the action of $B$ on $X_w$. In \cite{Reiner.Shimozono}, the authors show that
\begin{equation}
\label{eqn:Bchar}
\text{$\kappa_{w \lambda}$ is the $B$-character of $H^0(X_w,\lineb{\lambda})^{*}$,}
\end{equation} 
where 
\begin{equation}
\label{eqn:wlambda}
w \lambda = (\lambda_{w^{-1}(1)},\ldots,\lambda_{w^{-1}(n)}).
\end{equation}
(A similar statement holds for all other finite types.)

The following summarizes the fundamental relationship between Levi spherical Schubert varieties,
Levi subgroup representation theory, Demazure modules, and split-symmetry:

\begin{theorem}
\label{thm:fundamentalRelationship}
Let $\lambda \in {\sf Par}_n$, and $w \in {\mathfrak S}_n$. Suppose $I\subseteq J(w)$ and $D=[n-1]-I$.
\begin{itemize}
\item[(I)]  $H^0(X_w,\lineb{\lambda})^{*}$ is an $L_I$-module with character $\kappa_{w \lambda}$. Hence $\kappa_{w\lambda}$ is a nonnegative integer combination of $D$-Schur polynomials in ${\Pi}_{D}$.
\item[(II)] $X_w$ is $L_I$-spherical if and only if  $\kappa_{w \lambda}$ is $D$-multiplicity-free
for all $\lambda \in {\sf Par}_n$.
\end{itemize}
\end{theorem}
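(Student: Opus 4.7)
The plan is to derive (I) first, then use it to obtain (II).

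For (I): the chain $T\subseteq L_I\subseteq P_I\subseteq P_{J(w)}={\rm stab}_G(X_w)$ (using $I\subseteq J(w)$ and (\ref{eqn:stablizeractionparabolic})) exhibits $L_I$ as a reductive subgroup acting on $X_w$ by left translations. The restriction of $\lineb{\lambda}$ to $X_w$ inherits the ambient $G$-linearization, so $H^0(X_w,\lineb{\lambda})^{*}$ is a finite-dimensional $L_I$-module. Its $T$-character is already $\kappa_{w\lambda}$ by the Reiner--Shimozono identification (\ref{eqn:Bchar}). Now the irreducible polynomial representations of the reductive group $L_I\cong GL_{d_1-d_0}\times\cdots\times GL_{d_{k+1}-d_k}$ are exactly external tensor products of irreducible polynomial $GL_{d_t-d_{t-1}}$-modules, whose characters are Schur polynomials in the block of variables $X_t$. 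These products are precisely the $D$-Schur polynomials $s_{\lambda^1,\ldots,\lambda^{k+1}}$. Complete reducibility of the $L_I$-module $H^0(X_w,\lineb{\lambda})^{*}$ therefore forces its $T$-character $\kappa_{w\lambda}$ to expand as a nonnegative integer combination of $D$-Schur polynomials, and so lies in ${\Pi}_{D}$.

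For (II): invoke the Vinberg--Kimelfeld sphericality criterion, as formulated in \cite{Perrin}: a normal quasi-projective $H$-variety $X$ is $H$-spherical if and only if $H^0(X,\mathcal{L})$ is a multiplicity-free $H$-module for every $H$-linearized line bundle $\mathcal{L}$ on $X$. The variety $X_w$ is normal by \cite{DL,RR85}. Applying the criterion with $H=L_I$ and $\mathcal{L}$ ranging over the restrictions $\lineb{\lambda}|_{X_w}$, sphericality of $X_w$ becomes the statement that the $L_I$-module $H^0(X_w,\lineb{\lambda})^{*}$ is multiplicity-free for every dominant $\lambda$. By part (I), multiplicity-freeness of this $L_I$-module is the condition that $\kappa_{w\lambda}$ be $D$-multiplicity-free in the sense of Definition~\ref{def:mfreeness}.

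It remains to explain why one may restrict $\lambda$ to ${\sf Par}_n$ rather than all dominant weights. Every dominant weight of $GL_n$ has the form $\lambda=\mu+c(1,\ldots,1)$ with $\mu\in{\sf Par}_n$ and $c\in\mathbb{Z}$, and tensoring $\lineb{\lambda}$ with the $c$-th power of the determinant character of $G$ shifts $\kappa_{w\lambda}$ by the monomial $(x_1\cdots x_n)^c$. This only twists the $L_I$-module by a one-dimensional character, so the $D$-Schur expansion is merely relabeled and multiplicity-freeness is preserved. Hence testing on $\lambda\in{\sf Par}_n$ suffices. The main obstacle I anticipate is citing a form of Vinberg--Kimelfeld that applies directly to $G$-linearized line bundles on the possibly singular projective variety $X_w$ (with $H=L_I$ a subgroup); the remainder is a direct combination of (\ref{eqn:Bchar}), reducibility of $L_I$-representations, and the $D$-Schur identification of irreducible $L_I$-characters.
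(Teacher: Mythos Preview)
Your approach is essentially the same as the paper's: for (I) you use (\ref{eqn:stablizeractionparabolic}), the identification (\ref{eqn:Bchar}), and complete reducibility of $L_I$; for (II) you invoke the Vinberg--Kimelfeld-type criterion recorded in \cite[Theorem~2.1.2]{Perrin} together with normality of $X_w$.

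There is one genuine gap in your argument for (II), and it is not quite the obstacle you anticipate. The criterion in \cite{Perrin} requires multiplicity-freeness of $H^0(X_w,\mathcal{L})$ for \emph{every} $L_I$-linearized line bundle $\mathcal{L}$ on $X_w$, but you only range $\mathcal{L}$ over the restrictions $\lineb{\lambda}|_{X_w}$. You must justify that these restrictions exhaust $\mathrm{Pic}(X_w)$ (up to the twists you already handle). The paper supplies exactly this ingredient by citing \cite[Proposition~2.2.8]{Brion.Lectures}: every line bundle on $X_w$ is the restriction of a line bundle on $G/B$. With that fact in hand, your determinant-twist reduction from arbitrary dominant weights to ${\sf Par}_n$ is fine (the paper instead asserts directly that line bundles on $G/B$ in type $A$ are indexed by ${\sf Par}_n$). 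So the ``obstacle'' is not the applicability of the criterion to singular $X_w$---that is already covered by \cite{Perrin}---but rather the identification of $\mathrm{Pic}(X_w)$.
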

\begin{proof}
Since $I \subseteq J(w)$, \eqref{eqn:stablizeractionparabolic} implies $L_I$ acts on $X_w$. 

(I) The action of $B$ on $H^0(X_w, \lineb{\lambda})^{*}$ is induced by the left multiplication action of $B$ on $X_w$~\cite{D74}. In the same way, the left multiplication action of $L_I$ on $X_w$ induces the $L_I$ action on $H^0(X_w, \lineb{\lambda})^{*}$. By \eqref{eqn:Bchar}, a diagonal matrix $x \in B$ acts on $H^0(X_w, \lineb{\lambda})^{*}$ with trace $\kappa_{w \lambda}$. The same diagonal matrix $x \in L_I$ acts identically on $H^0(X_w, \lineb{\lambda})^{*}$, and thus also has trace $\kappa_{w \lambda}$. Thus $\kappa_{w \lambda}$ is the character of an $L_I$-module. Since $L_I$ is reductive, and we work over a field of characteristic zero, character theory implies $\kappa_{w \lambda}$ may be written a nonnegative integer combination of characters of irreducible $L_I$-modules. That is, a nonnegative integer combination of $D$-Schurs in ${\Pi}_{D}$.

(II) There are numerous equivalent characterizations of spherical varieties found in the literature and collected in \cite[Theorem 2.1.2]{Perrin}. Of primary interest for us is the following: A quasi-projective, normal $R$-variety $Y$ is $R$-spherical for a reductive group $R$ if and only if the $R$-module $H^0(Y,\mathcal{L})$ is multiplicity-free for all $R$-linearized line bundles $\mathcal{L}$. 

All Schubert varieties are quasi-projective and normal~\cite{Jos.Normal}. The line bundles on $G/B$, when $G$ is of type $A$, are indexed by partitions in ${\sf Par}_n$. Every line bundle on $X_w$ is the restriction of a line bundle on $G/B$~\cite[Proposition 2.2.8]{Brion.Lectures}. Since $\mathcal{L}_{\lambda}$, for $\lambda \in {\sf Par}_n$, is $G$-linearized~\cite[\S 1.4]{Brion.Lectures}, its restriction to $X_w$, which we also denote by $\mathcal{L}_{\lambda}$, is $L_I$-linearized. Since $L_I$ is a product of general linear groups, $H^0(X_w, \lineb{\lambda})$ is a multiplicity-free $L_I$-module if and only if $H^0(X_w, \lineb{\lambda})^{*}$ is a multiplicity-free $L_I$-module. Thus, via the equivalent characterization of spherical varieties, we have that $X_w$ is $L_I$-spherical if and only if the $L_I$-module $H^0(X_w, \lineb{\lambda})^{*}$ is multiplicity-free for all $\lambda \in {\sf Par}_n$.  By (I), this holds if and only if $\kappa_{w \lambda}$ is $D$-multiplicity-free for all $\lambda \in {\sf Par}_n$.
\end{proof}

\begin{remark}
We similarly expect that Theorem \ref{thm:fundamentalRelationship} holds for $X_w$ in any $G/B$ and that $X_w$ is $L_I$-spherical if and only if all Demazure modules are multiplicity-free $L_I$-modules. We plan to explicate this in future work (with Y.~Gao). 
\end{remark}

\subsection{Consequences of Theorem~\ref{thm:fundamentalRelationship}} 
First, we illustrate how to reprove Proposition~\ref{prop:geometricmono}, in type $A_{n-1}$, but from symmetric function considerations:

\begin{corollary}[Geometric monotonicity (type $A_{n-1}$)]\label{cor:geometricmono}
Suppose $w\in {\mathfrak S}_n$ and $I'\subseteq I\subseteq J(w)$. If  $X_w$ is $L_{I'}$-spherical, then 
$X_w$ is $L_I$-spherical.
\end{corollary}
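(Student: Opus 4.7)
The plan is to exploit Theorem~\ref{thm:fundamentalRelationship}(II), which converts both $L_{I'}$- and $L_I$-sphericality into combinatorial statements about $\kappa_{w\lambda}$. Writing $D=[n-1]-I$ and $D'=[n-1]-I'$, the hypothesis $I'\subseteq I$ translates into $D\subseteq D'$. By that theorem, $L_{I'}$-sphericality of $X_w$ is equivalent to $\kappa_{w\lambda}$ being $D'$-multiplicity-free for every $\lambda\in{\sf Par}_n$, while $L_I$-sphericality is equivalent to $\kappa_{w\lambda}$ being $D$-multiplicity-free for every such $\lambda$. Thus the corollary reduces to the purely combinatorial claim: $D'$-multiplicity-freeness of $\kappa_{w\lambda}$ implies $D$-multiplicity-freeness.

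First I would record that $\Pi_D\subseteq \Pi_{D'}$, since demanding symmetry within each (larger) $D$-block is strictly stronger than demanding symmetry within the refining $D'$-subblocks. In particular every $D$-Schur $s^{D}_{\lambda^{\bullet}}$ lies in $\Pi_{D'}$ and therefore admits an expansion
\[s^{D}_{\lambda^{\bullet}}=\sum_{\mu^{\bullet}\in{\sf Par}_{D'}} a^{\lambda^{\bullet}}_{\mu^{\bullet}}\, s^{D'}_{\mu^{\bullet}}\]
in the $D'$-Schur basis supplied by Corollary~\ref{cor:splitschur}. The coefficients $a^{\lambda^{\bullet}}_{\mu^{\bullet}}$ are iterated Littlewood--Richardson numbers, hence nonnegative integers, and since $s^{D}_{\lambda^{\bullet}}$ is nonzero at least one $a^{\lambda^{\bullet}}_{\mu^{\bullet}}$ is strictly positive.

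Next I would argue by contradiction. Suppose some $\kappa_{w\lambda}$ has $D$-Schur expansion
\[\kappa_{w\lambda}=\sum_{\lambda^{\bullet}\in{\sf Par}_D}c_{\lambda^{\bullet}}\,s^{D}_{\lambda^{\bullet}}\]
with a coefficient $c_{\lambda_0^{\bullet}}\geq 2$ (the $c_{\lambda^{\bullet}}$ are themselves nonnegative integers, by Theorem~\ref{thm:fundamentalRelationship}(I)). Choosing $\mu_0^{\bullet}$ so that $a^{\lambda_0^{\bullet}}_{\mu_0^{\bullet}}\geq 1$, rewriting $\kappa_{w\lambda}$ in the $D'$-basis and using nonnegativity of every coefficient in sight shows that the $s^{D'}_{\mu_0^{\bullet}}$-coefficient is at least $c_{\lambda_0^{\bullet}}\cdot a^{\lambda_0^{\bullet}}_{\mu_0^{\bullet}}\geq 2$, contradicting the $D'$-multiplicity-freeness hypothesis.

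The only non-formal input is the positivity of the change-of-basis matrix from $D$-Schurs to $D'$-Schurs; this in turn rests on classical Littlewood--Richardson positivity applied to each Schur factor of $s^{D}_{\lambda^{\bullet}}$ when re-expressed over the finer $D'$-partition of variables. Beyond this standard fact, there is no real obstacle; the whole argument is a clean application of the representation-theoretic framework recorded in Theorem~\ref{thm:fundamentalRelationship}.
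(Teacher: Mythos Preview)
Your proof is correct and follows essentially the same approach as the paper: both arguments use Theorem~\ref{thm:fundamentalRelationship}(II) in the contrapositive direction and deduce $D'$-multiplicity from $D$-multiplicity via the nonnegativity of the Littlewood--Richardson coefficients governing the change of basis from $D$-Schurs to $D'$-Schurs. The only cosmetic difference is that the paper first reduces by induction to the case $\#(I-I')=1$ and applies a single Littlewood--Richardson expansion, whereas you handle the general case directly with iterated Littlewood--Richardson; this is the same idea.
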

\begin{proof}
Suppose $X_w$ is not $L_I$-spherical. By Theorem~\ref{thm:fundamentalRelationship}(II), there exists
$\lambda\in {\sf Par}_n$ such that $\kappa_{w\lambda}$ is not $D$-multiplicity-free, where 
$D=[n-1]-I=\{d_1<d_2<\ldots<d_k\}$. That is,
\begin{equation}
\label{eqn:July7www}
\kappa_{w\lambda}=\sum_{(\lambda^1,\ldots,\lambda^k) \in {\sf Par}_D}c_{\lambda^1,\ldots,\lambda^k}s_{\lambda^1,\ldots,\lambda^k}
\end{equation}
and there exists $(\lambda^1,\ldots,\lambda^k)\in {\sf Par}_D$ such that
$c_{\lambda^1,\ldots,\lambda^k}>1$. 

By induction, we may assume $\#(I-I')=1$. Thus 
\[D':=[n-1]-I'=\{d_1<d_2<\ldots<d_f<d_f'<d_{f+1}<\ldots<d_k\}\supseteq D.\]
In general, let $\mu\in {\sf Par}_m$. Then it is standard (see, \emph{e.g.,} \cite[(7.66)]{ECII}) that
\begin{equation}
\label{eqn:July7yyy}
s_{\mu}(x_1,\ldots,x_m)=\sum_{\pi,\theta}C_{\pi,\theta}^{\mu}s_{\pi}(x_1,\ldots,x_a)s_{\theta}(x_{a+1},\ldots,x_m)
\end{equation}
where $C_{\pi,\theta}^{\mu}\geq 0$ is the \emph{Littlewood-Richardson coefficient}. Now apply (\ref{eqn:July7yyy}) to
each term of (\ref{eqn:July7www}): $\mu=\lambda^f$, $m=d_f-d_{f-1}$ and $a=d_{f'}-d_f$. Thereby, we obtain
a $D'$-Schur expansion of $\kappa_{w\lambda}$ in $\Pi_{D'}$ which also must have multiplicity. Now apply 
Theorem~\ref{thm:fundamentalRelationship}(II) once more.
\end{proof}

Second, towards Problem~\ref{prob:charD}, we offer:

\begin{theorem}
\label{cor:mfkeyweak}
Suppose $\alpha \in {\overline{\sf KM}}_n\cap {\sf Comp}_n(D)$. $\kappa_{\alpha}$ is $D$-multiplicity-free if either:
\begin{itemize}
\item[(I)] $\alpha\in {\sf Comp}_n$ has all parts distinct, that is, $\alpha_i\neq \alpha_j$ for $i\neq j$; or
\item[(II)] $\alpha$ also avoids $(0,0,1,1)$.
\end{itemize}
\end{theorem}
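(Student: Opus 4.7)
The plan is to combine Theorem~\ref{thm:mfKey} (monomial multiplicity-freeness) with Theorem~\ref{thm:fundamentalRelationship}(I) (nonnegativity of the $D$-Schur expansion), and then exploit the triangular relationship between monomials and $D$-Schurs in $\Pi_D$ to deduce $D$-multiplicity-freeness.

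First I would realize $\alpha$ as $w\lambda$ for some $w \in {\mathfrak S}_n$ with $I \subseteq J(w)$. Take $\lambda \in {\sf Par}_n$ to be the weakly decreasing rearrangement of $\alpha$, and choose $w$ satisfying $\alpha_i = \lambda_{w^{-1}(i)}$ together with $w^{-1}(i) > w^{-1}(i+1)$ for every $i \in I$. The hypothesis $\alpha \in {\sf Comp}_n(D)$ guarantees $\alpha_i \leq \alpha_{i+1}$ for every $i \in I$; a strict ascent at such an $i$ forces $w^{-1}(i) > w^{-1}(i+1)$ automatically, while at a flat $\alpha_i = \alpha_{i+1}$ we retain the freedom to swap positions within equal-value blocks of $\lambda$. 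Hypothesis (I) removes flats entirely, pinning $w$ down uniquely with $J(w) = I$; hypothesis (II) restricts the equal-value configurations to the extent needed to carry out this lift explicitly and compatibly with $I \subseteq J(w)$.

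Once the lift is in place, Theorem~\ref{thm:fundamentalRelationship}(I) supplies
\[
\kappa_\alpha \;=\; \kappa_{w\lambda} \;=\; \sum_{\mu \in {\sf Par}_D} c_\mu\, s_\mu, \qquad c_\mu \in \mathbb{Z}_{\geq 0}.
\]
Theorem~\ref{thm:mfKey}, applied to $\alpha \in \overline{\sf KM}_n$, tells us $\kappa_\alpha$ is $[n-1]$-multiplicity-free, i.e.\ each monomial coefficient is $0$ or $1$. For $\mu = (\mu^1,\ldots,\mu^k) \in {\sf Par}_D$, let $x^\mu$ denote the block-diagonal monomial $\prod_i x_{d_{i-1}+1}^{\mu^i_1} \cdots x_{d_i}^{\mu^i_{d_i - d_{i-1}}}$. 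Its coefficient in $s_\mu$ equals $\prod_i K_{\mu^i, \mu^i} = 1$, and in any other $s_\nu$ equals $\prod_i K_{\nu^i, \mu^i} \geq 0$. Hence
\[
[x^\mu]\kappa_\alpha \;=\; c_\mu + \sum_{\nu \neq \mu} c_\nu \prod_i K_{\nu^i, \mu^i} \;\geq\; c_\mu.
\]
Since $[x^\mu]\kappa_\alpha \in \{0,1\}$ and $c_\mu \geq 0$, we conclude $c_\mu \in \{0,1\}$, so $\kappa_\alpha$ is $D$-multiplicity-free.

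The main obstacle is the lifting step: one must ensure the compatibility of $I \subseteq J(w)$ with $w\lambda = \alpha$ when flats $\alpha_i = \alpha_{i+1}$ appear at positions $i \in I$. Hypotheses (I) and (II) are precisely the two regimes in which this compatibility can be established without further combinatorial overhead --- (I) by removing flats, (II) by controlling how repeated values can interleave --- which presumably dictates the form in which the theorem is stated. After the lift is secured, Theorem~\ref{thm:fundamentalRelationship} provides the representation-theoretic nonnegativity input, and the standard Kostka-triangularity of the Schur basis closes the argument, matching the authors' claim that the proof is short.
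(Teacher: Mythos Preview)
Your argument is correct, but it takes a genuinely different route from the paper's.

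The paper does not invoke Theorem~\ref{thm:mfKey} at all. Instead, for the chosen lift $w$ (with $w\lambda=\alpha$), the paper shows directly from the ${\sf KM}$-avoidance hypotheses that $w$ avoids $321$ and $3412$; then Theorem~\ref{thm:toric} gives that $X_w$ is $L_\emptyset$-spherical (toric), geometric monotonicity upgrades this to $L_I$-spherical, and Theorem~\ref{thm:fundamentalRelationship}(II) yields $D$-multiplicity-freeness of $\kappa_\alpha$. Hypotheses (I) and (II) enter precisely here: they are what guarantee that \emph{some} lift $w$ avoids $321$ and $3412$ (the paragraph after the proof explains that for $\alpha=(0,0,1,1)$ no such $w$ exists, which is why the paper cannot drop (I)/(II) with its method).

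Your approach instead feeds Theorem~\ref{thm:mfKey} (monomial multiplicity-freeness) and Theorem~\ref{thm:fundamentalRelationship}(I) (nonnegativity of the $D$-Schur expansion) into the Kostka-triangularity inequality $c_\mu\le [x^\mu]\kappa_\alpha$. This is valid. Moreover, your worry about the lifting step is misplaced: a $w$ with $w\lambda=\alpha$ and $I\subseteq J(w)$ \emph{always} exists (on each level set of $\alpha$ choose $w^{-1}$ decreasing), so your argument actually dispenses with hypotheses (I) and (II) entirely and proves the stronger statement that every $\alpha\in\overline{\sf KM}_n\cap{\sf Comp}_n(D)$ has $\kappa_\alpha$ $D$-multiplicity-free.

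The trade-off is logical dependence. The paper's proof is independent of Theorem~\ref{thm:mfKey}; indeed, the authors remark that Theorem~\ref{cor:mfkeyweak} combined with \cite{Hodges.Yong} yields a short proof of (a restricted form of) Theorem~\ref{thm:mfKey}, so their intended direction is \ref{cor:mfkeyweak}~$\Rightarrow$~(partial)~\ref{thm:mfKey}. Your argument runs the implication the other way, taking Theorem~\ref{thm:mfKey} as a black box. Granting that input, your route is shorter and strictly stronger.
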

\begin{proof}
(I): Let $\lambda$ be the partition obtained by sorting the parts of $\alpha$ in decreasing order. Let $w\in {\mathfrak S}_n$
be such that $w\lambda=\alpha$ (this permutation is unique by the distinct parts hypothesis). We claim $w$ avoids
$321$ and $3412$. Suppose not. Observe that since $321$ and $3412$ are self-inverse, this means $w^{-1}$ contains
a $321$ or $3412$ pattern. In the former case, let $i<j<k$ be the indices of the $321$ pattern. Then 
$(\alpha_i,\alpha_j,\alpha_k)=(\lambda_{w^{-1}(i)},\lambda_{w^{-1}(j)}, \lambda_{w^{-1}(k)})$ and 
since $w^{-1}(i)>w^{-1}(j)>w^{-1}(k)$, we have $\lambda_{w^{-1}(i)}<\lambda_{w^{-1}(j)}< \lambda_{w^{-1}(k)}$
which means $\alpha_i<\alpha_j<\alpha_k$ is a $(0,1,2)$-pattern, a contradiction. Similarly, one argues that if
$w^{-1}$ contains a $3412$ pattern, then $\alpha$ contains $(1,0,3,2)$, another contradiction.

Hence $w$ avoids $321$ and $3412$. So, by Theorem~\ref{thm:toric}, $X_w$ is $L_{\emptyset}$-spherical.
Thus, by Theorem~\ref{thm:fundamentalRelationship}(II), $\kappa_{w\lambda}=\kappa_{\alpha}$ is multiplicity-free.
Now apply Corollary~\ref{cor:geometricmono} (or Proposition~\ref{prop:geometricmono}).

(II) Let $\lambda$ be as above. Since $\alpha$ might not have distinct parts, there is a choice of $w$ such that
$w\lambda=\alpha$. Choose $w$ such that if 
\begin{equation}
\label{eqn:July10ytt}
\alpha_i=\alpha_j \text{\ and $i<j \Rightarrow w^{-1}(i)<w^{-1}(j)$.}
\end{equation} 

We claim $w$ (equivalently $w^{-1}$) avoids $321$ and $3412$. Suppose not. Say $w^{-1}$ contains $321$ at positions
$i<j<k$. Then by (\ref{eqn:July10ytt}) this means $(\lambda_{w^{-1}(i)}<\lambda_{w^{-1}(j)}< \lambda_{w^{-1}(k)})$ and
hence $\alpha_i<\alpha_j<\alpha_k$ forms a $(0,1,2)$ pattern, a contradiction. Thus suppose $w^{-1}$ contains a
$3412$ pattern at $i<j<k<\ell$. By the same reasoning, we know 
$\alpha_i\geq \alpha_j, \alpha_j<\alpha_k\geq \alpha_{\ell}, \alpha_{\ell}>\alpha_i$. 

\noindent
\emph{Case 1:} ($\alpha_i=\alpha_j$) If $\alpha_k=\alpha_j+1$ then $\alpha_\ell=\alpha_k$ (otherwise we contradict
(\ref{eqn:July10ytt}). Then $\alpha$ contains $(0,0,1,1)$, a contradiction. Otherwise $\alpha_k\geq \alpha_j+2$,
and $\alpha$ contains $(0,0,2,2)$ or $(0,0,2,1)$.

\noindent
\emph{Case 2:} ($\alpha_i>\alpha_j$) Since $\alpha_k\geq \alpha_{\ell}>\alpha_i$, $\alpha$ contains $(1,0,3,2),(1,0,2,2)$,
a contradiction.

Hence $w^{-1}$ avoids $321$ and $3412$, and we conclude as in (I).
\end{proof}

Combining Theorem~\ref{cor:mfkeyweak} with the arguments of \cite[Section~3.1]{Hodges.Yong} gives a relatively short
proof of Theorem~\ref{thm:mfKey} under the additional hypothesis (I) or (II). However, there
is an obstruction to carrying out the argument to prove Theorem~\ref{thm:mfKey} completely. Consider $\alpha=(0,0,1,1)$. Indeed $\kappa_{\alpha}$ is $[n-1]$-multiplicity-free. Following the reasoning of the argument,
$\lambda=(1,1,0,0)$. The permutations $w\in {\mathfrak S}_4$ such that $w\lambda=\alpha$
are $3412,4312,3421,4321$, but each of these contains $321$ or $3412$. In \cite{Hodges.Yong},
we prove Theorem~\ref{thm:mfKey} using a different, purely combinatorial approach.

Third, we examine the following observation that is immediate from
Theorem~\ref{thm:fundamentalRelationship}(II):

\begin{corollary}
\label{Cor:May20abd}
Suppose $w\in {\mathfrak S}_n$ and $I\subseteq J(w)$.
Let $\lambda^{\text{staircase}}=(n,n-1,n-2,\ldots,3,2,1)$.
If $\kappa_{w\lambda^{\text{staircase}}}$ is not $D$-multiplicity-free then
$X_w$ is not $L_I$-spherical.
\end{corollary}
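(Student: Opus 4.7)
The plan is to apply Theorem~\ref{thm:fundamentalRelationship}(II) via its contrapositive. That theorem asserts the biconditional
\[
X_w \text{ is } L_I\text{-spherical} \iff \kappa_{w\lambda} \text{ is } D\text{-multiplicity-free for every } \lambda \in {\sf Par}_n.
\]
Negating the right-hand side amounts to exhibiting at least one $\lambda \in {\sf Par}_n$ for which the $D$-split Schur expansion of $\kappa_{w\lambda}$ has some coefficient $c_{\lambda^{1},\ldots,\lambda^{k}} \geq 2$.

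Thus the proof reduces to checking that $\lambda^{\text{staircase}}=(n,n-1,\ldots,2,1)$ is a legitimate choice of $\lambda$, i.e., that $\lambda^{\text{staircase}} \in {\sf Par}_n$. This is evident from the definition of ${\sf Par}_n$: $\lambda^{\text{staircase}}$ is a length-$n$ weakly decreasing sequence of nonnegative integers. Hence, by assumption, $\lambda^{\text{staircase}}$ serves as a witness to the failure of $D$-multiplicity-freeness across all partitions, and Theorem~\ref{thm:fundamentalRelationship}(II) (contrapositive) yields that $X_w$ is not $L_I$-spherical.

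There is no substantive obstacle here; the corollary is a direct specialization of Theorem~\ref{thm:fundamentalRelationship}(II) to a single, distinguished partition. The only point worth emphasizing is that this specialization is the combinatorial content underlying Conjecture~\ref{conj:sphericalGeneric}, which predicts that $\lambda^{\text{staircase}}$ is in fact a \emph{universal} witness: checking $D$-multiplicity-freeness of $\kappa_{w\lambda^{\text{staircase}}}$ alone should suffice to decide $L_I$-sphericality. The present corollary establishes the ``only if'' half of that predicted equivalence unconditionally.
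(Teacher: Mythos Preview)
Your proof is correct and matches the paper's approach: the paper simply states that the corollary is immediate from Theorem~\ref{thm:fundamentalRelationship}(II), and you have spelled out exactly that---the contrapositive applied to the single partition $\lambda^{\text{staircase}}\in{\sf Par}_n$.
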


\begin{example}\label{exa:May20ddd}
Let $n=5$ and $w=24531$. Then $X_w\subset {GL}_5/{B}$. In Example~\ref{exa:24531predict}, we showed $w$ is not $J(w)$-spherical.
We now show this agrees with Conjecture~\ref{conj:main}. Let $I=J(w)=\{1,3\}$ and thus $D=\{2,4\}$. Since $w^{-1}=51423$, 
$w\lambda^{\text{staircase}}=w(5,4,3,2,1)=(1,5,2,4,3)$. 
Now, $\kappa_{w\lambda^{\text{staircase}}}\in {\Pi}_{D}$ and 
\begin{multline}\label{eqn:May20xyy}
\kappa_{1,5,2,4,3}=s_{(5,4),(2,1),(3)}+s_{(5,4),(3,2),(1)}+s_{(5,2),(3,2),(3)}
+2s_{(5,3),(3,2),(2)} +s_{(5,3),(2,2),(3)}\\+s_{(5,2),(3,3),(2)}+2s_{(5,2),(4,2),(2)}
+s_{(5,3),(3,3),(1)}+s_{(5,3),(4,1),(2)}+s_{(5,3),(3,1),(3)}\\+s_{(5,3),(4,2),(1)}+s_{(5,2),(4,3),(1)}
+s_{(5,2),(4,1),(3)}+s_{(5,4),(2,2),(2)}+s_{(5,4),(3,1),(2)}\\+s_{(5,1),(4,2),(3)}+s_{(5,1),(4,3),(2)}.
\end{multline}
By Corollary~\ref{Cor:May20abd}, the multiplicity in (\ref{eqn:May20xyy}) says that $X_w$ is not $L_{J(w)}$-spherical.

A theorem of V.~Lakshmibai-B.~Sandhya \cite{Lak} states that $X_w$ is smooth if and only if $w$ avoids the patterns $3412$
and $4231$. Hence $X_{24531}$ is smooth, but not spherical.
\end{example}

Theorem~\ref{thm:fundamentalRelationship} does not give an algorithm to prove $X_w$ is $I$-spherical, because it
demands that one check $\kappa_{w\lambda}$ is $D$-multiplicity-free for \emph{infinitely} many $\lambda$. A
complete solution to Problem~\ref{prob:charD} should give a characterization of when $X_w$ is $I$-spherical.
However, one can obtain an algorithm without solving that problem. The next claim asserts 
this infinite check can be reduced to a \emph{single} check. 

\begin{conjecture}
\label{conj:sphericalGeneric}
The converse of Corollary~\ref{Cor:May20abd} is true.
\end{conjecture}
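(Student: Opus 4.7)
\textbf{Plan for proof of Conjecture~\ref{conj:sphericalGeneric}.} The natural point of attack is Theorem~\ref{thm:fundamentalRelationship}(II), which rephrases the desired implication as a purely combinatorial statement about key polynomials: it suffices to prove that
\[
\kappa_{w\lambda^{\text{staircase}}}\text{ is $D$-multiplicity-free}\implies\kappa_{w\lambda}\text{ is $D$-multiplicity-free for every $\lambda\in{\sf Par}_n$.}
\]
Thus the geometry drops out entirely, and what remains is to compare the $D$-split expansions of $\kappa_{w\lambda}$ as $\lambda$ varies. I would proceed in two stages: first settle the case where $\lambda$ has pairwise distinct parts, and second, reduce an arbitrary $\lambda$ to a nearby partition with distinct parts.

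\textbf{Stage 1 (distinct parts).} The goal here is Proposition~\ref{prop:allthesame}: for \emph{any} two $\lambda,\mu\in{\sf Par}_n$ with all parts distinct, the $D$-split expansions of $\kappa_{w\lambda}$ and $\kappa_{w\mu}$ share the same support \emph{and} the same multiplicities. The plan is to invoke the combinatorial formula of C.~Ross and the second author \cite{Ross.Yong} expanding a key polynomial into $D$-Schur polynomials. Each term in that expansion is governed by the relative order of the rows/entries of a tableau indexed by $\alpha$; when $\alpha$ has distinct parts, one can define a shape-preserving bijection between the generating objects for $\kappa_{w\lambda}$ and those for $\kappa_{w\mu}$ by substituting the sorted distinct values of $\mu$ for those of $\lambda$ in the obvious way. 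Because this relabeling commutes with the splitting operation defined by $D$, the multiplicity vectors coincide.

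\textbf{Stage 2 (general $\lambda$).} For $\lambda$ with repeated parts, build a sequence $\lambda=\lambda^{(0)},\lambda^{(1)},\ldots,\lambda^{(r)}$ ending in a partition with distinct parts, by adjusting one block of equal parts at a time (this is the ``up-one'' operation of Conjecture~\ref{conj:upone}). The aim is to show that at each step the multiplicity vector of $\kappa_{w\lambda^{(i+1)}}$ \emph{dominates} that of $\kappa_{w\lambda^{(i)}}$ in the sense that any multiplicity $\geq 2$ for $\lambda^{(i)}$ forces a multiplicity $\geq 2$ for $\lambda^{(i+1)}$. The mechanism should again be a combinatorial map on Ross--Yong objects: an object contributing to the coefficient of $s_{\lambda^1,\ldots,\lambda^k}$ in $\kappa_{w\lambda^{(i)}}$ lifts to one or more objects contributing to $\kappa_{w\lambda^{(i+1)}}$ in a controlled way that preserves distinctness of the underlying split shape, so coincidences can only be created, not destroyed. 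Iterating then reduces to Stage~1 and the staircase hypothesis.

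\textbf{Main obstacle.} The hard step is Stage~2, precisely because moving across a ``wall'' $\lambda_i=\lambda_{i+1}$ is where the combinatorics of key polynomials is most delicate: two previously distinct $D$-Schur labels may merge, and at the same time a single Ross--Yong object may split into several in the finer polytope. Producing the required injection of multiplicities, and showing it behaves well under the $D$-split operator $\pi_{d}$ for $d\in D$, is the content of Conjectures~\ref{conj:weakerver} and~\ref{conj:upone}; its resolution would immediately yield Conjecture~\ref{conj:sphericalGeneric} by combining with Proposition~\ref{prop:allthesame} and Theorem~\ref{thm:fundamentalRelationship}(II). I would not expect to resolve this in full generality here; rather, I would prove the implication conditional on those combinatorial statements, and verify them in enough special cases (e.g.\ bigrassmannian $w$, as in Theorem~\ref{thm:bigrassmannian}) to confirm the $n\leq 6$ evidence cited in the text.
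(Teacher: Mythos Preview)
The statement is a \emph{conjecture}; the paper does not prove it. What the paper does prove is the conditional Proposition~\ref{prop:allthesame}: Conjecture~\ref{conj:upone} $\Rightarrow$ Conjecture~\ref{conj:sphericalGeneric}. So the right comparison is between your plan and that reduction.

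Your overall architecture---reduce an arbitrary $\lambda$ to one with distinct parts via the ``up-one'' move, then handle the distinct-parts case---matches the paper's. Your Stage~2 is exactly the content of Conjecture~\ref{conj:upone}, and you correctly flag it as the unresolved obstacle; this is also where the paper stops.

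There are, however, two concrete problems with your Stage~1. First, you misquote Proposition~\ref{prop:allthesame}: it does \emph{not} assert that for any two distinct-parts $\lambda,\mu$ the $D$-split expansions of $\kappa_{w\lambda}$ and $\kappa_{w\mu}$ have ``the same support and the same multiplicities''; it asserts the implication (up-one) $\Rightarrow$ (staircase suffices). Second, the statement you attribute to it is false as written (the supports live in different index sets, since the shapes change with $\lambda$), and the bijection you sketch via the Ross--Yong rule does not obviously exist: the rule of Theorem~\ref{RYthm} is governed by the permutation $w[\alpha]$ with $\alpha=w\lambda$, and $w[w\lambda]$ genuinely changes as $\lambda$ varies, so ``substituting the sorted distinct values of $\mu$ for those of $\lambda$'' does not define a map on the tableau sequences.

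The paper handles the distinct-parts step differently and without any such bijection: starting from a $\lambda^{\text{distinct}}$ witnessing multiplicity, it applies Conjecture~\ref{conj:upone} repeatedly (increasing one part at a time) to reach $\overline\lambda=\lambda^{\text{staircase}}+(r,\ldots,r)$ for some $r$, and then uses the elementary factorization $\kappa_{w\overline\lambda}=(\prod_i x_i^r)\,\kappa_{w\lambda^{\text{staircase}}}$ together with $(y_1\cdots y_d)^r s_\mu=s_{r^d+\mu}$ to transfer the multiplicity to the staircase. In other words, the paper uses up-one \emph{twice} (general $\to$ distinct, then distinct $\to$ shifted staircase) plus a factorization trick, rather than attempting a direct comparison of two arbitrary distinct-parts partitions. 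If you want to salvage your Stage~1 as an independent statement, you would need to formulate it as a multiplicity-preserving bijection between the Ross--Yong objects for $w[w\lambda]$ and $w[w\mu]$ and actually construct it; absent that, the paper's route through the shifted staircase is both cleaner and already written down.
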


Let us also state a weaker assertion:

\begin{conjecture}
\label{conj:weakerver}
If $X_w$ is not $L_I$-spherical, there exists $\lambda^{\text{distinct}}=(\lambda_1>\lambda_2>\ldots>\lambda_n)$
such that $\kappa_{w\lambda^{\text{distinct}}}$ is not $D$-multiplicity-free.
\end{conjecture}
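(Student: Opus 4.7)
By Theorem~\ref{thm:fundamentalRelationship}(II), the hypothesis that $X_w$ is not $L_I$-spherical provides some $\lambda \in {\sf Par}_n$ and a tuple $\vec{\mu} = (\mu^1, \ldots, \mu^{k+1}) \in {\sf Par}_D$ with $c_{\vec\mu} := [s_{\vec\mu}] \kappa_{w\lambda} \geq 2$. Set $\rho = (n-1, n-2, \ldots, 1, 0)$. For every $t \in \mathbb{Z}_{\geq 0}$, $\lambda + t\rho \in {\sf Par}_n$, and once $t$ is large its parts are strictly decreasing. The task reduces to showing that $\kappa_{w(\lambda + t\rho)}$ is not $D$-multiplicity-free for such $t$; one may then take $\lambda^{\text{distinct}} = \lambda + t\rho$.

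The primary approach is combinatorial, via the Ross--Yong rule \cite{Ross.Yong} (reproduced in Section~\ref{sec:bigrassmannianpf}), which realizes the $D$-Schur coefficient $c_{\vec\mu}$ as the cardinality of a set $\mathcal{K}_{\vec\mu}(w, \lambda)$ of combinatorial objects derived from the composition $w\lambda$. I plan to construct an injection
\[
\Phi_t : \mathcal{K}_{\vec\mu}(w, \lambda) \hookrightarrow \mathcal{K}_{\vec\mu + \vec\tau_t}(w, \lambda + t\rho)
\]
for a uniform translation $\vec\tau_t = \vec\tau_t(\vec\mu)$ depending only on $\rho$, $t$, and $\vec\mu$, by appending to each object a fixed ``staircase padding'' built from $w(t\rho)$. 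Under such a $\Phi_t$, two distinct objects in $\mathcal{K}_{\vec\mu}(w, \lambda)$ yield two distinct objects in $\mathcal{K}_{\vec\mu + \vec\tau_t}(w, \lambda + t\rho)$, so $[s_{\vec\mu + \vec\tau_t}]\kappa_{w(\lambda+t\rho)} \geq 2$ and the conclusion follows.

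The main obstacle is proving that $\Phi_t$ respects the split-shape stratification. The Ross--Yong objects arise from Kohnert-type moves on a diagram associated to $w\lambda$, and a naive padding need not act cleanly on the $D$-split shape tuple; in particular, padded objects could \emph{a priori} merge into other $\vec\nu$-strata, possibly even cancelling the multiplicity at $\vec\mu + \vec\tau_t$ by introducing unforeseen parity. I expect a careful analysis, exploiting that for $t$ sufficiently large each column of the staircase padding lies strictly below the original diagram and hence ``decouples'' under the rule's moves, will force the padding to preserve split-shape equality up to $\vec\tau_t$.

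As an independent route I would also try the representation-theoretic surjection $H^0(X_w, \lineb{\lambda}) \otimes H^0(X_w, \lineb{t\rho}) \twoheadrightarrow H^0(X_w, \lineb{\lambda + t\rho})$ of $L_I$-modules (valid for $t$ large by Kempf--Ramanathan vanishing on Schubert varieties). Dualizing embeds $H^0(X_w, \lineb{\lambda + t\rho})^{*}$ into a tensor product that transparently inherits multiplicity from $H^0(X_w, \lineb{\lambda})^{*}$. The trouble here is that submodules of a multiplicity-laden $L_I$-module can be multiplicity-free, so one would still need a non-trivial grip on the $L_I$-structure of the kernel, and I suspect that controlling this kernel ultimately requires the same Ross--Yong combinatorics.
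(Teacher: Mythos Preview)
The statement you are addressing is labeled a \emph{Conjecture} in the paper; there is no unconditional proof to compare against. The only argument the paper gives toward it appears inside the proof of Proposition~\ref{prop:allthesame}, where it is shown that Conjecture~\ref{conj:upone} $\Rightarrow$ Conjecture~\ref{conj:weakerver}. That conditional argument proceeds by incrementing one part of $\alpha=w\lambda$ at a time (repeatedly invoking the hypothesized ``up-one'' monotonicity of Conjecture~\ref{conj:upone}) until all parts are distinct. So the paper's route and yours differ in granularity: the paper increments coordinate-by-coordinate under an assumed monotonicity, whereas you try to jump directly to $\lambda+t\rho$ via a single injection of Ross--Yong objects.

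Your plan has a genuine gap precisely where you flag it, and it is more serious than a technical verification. The Ross--Yong rule (Theorem~\ref{RYthm}) for $\kappa_{\alpha}$ is governed by the permutation $w[\alpha]$ with $\mathrm{code}(w[\alpha])=\alpha$ together with the tableau $T[\alpha]$ and the Edelman--Greene constraint~(d). Passing from $\alpha=w\lambda$ to $\alpha'=w(\lambda+t\rho)=w\lambda+t\cdot w\rho$ replaces $w[\alpha]$ by a permutation in a much larger symmetric group with a different reduced-word structure and a different $T[\alpha']$; there is no evident ``append a fixed staircase padding'' map on tableau sequences that intertwines the EGLS condition, and no decoupling of padded columns is available in general. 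Thus $\Phi_t$ is not defined, let alone injective into a single shifted stratum $\vec\mu+\vec\tau_t$. Your second route has the gap you already name: the surjection $H^0(X_w,\lineb{\lambda})\otimes H^0(X_w,\lineb{t\rho})\twoheadrightarrow H^0(X_w,\lineb{\lambda+t\rho})$ only embeds the dual into a tensor product, and a submodule of a non-multiplicity-free $L_I$-module can perfectly well be multiplicity-free, so nothing is forced. In summary, you are attempting an unconditional proof of what the paper leaves open; neither route you sketch closes the gap, and the paper offers no unconditional argument to borrow from.
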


\begin{conjecture}
\label{conj:upone}
Fix $D=\{d_1<d_2<\ldots<d_k\}$ and suppose $\alpha,\alpha^{\uparrow}\in {\sf Comp}_n(D)$ where 
$\alpha^{\uparrow}=(\alpha_1,\ldots,\alpha_{j-1},\alpha_j+1,\alpha_{j+1},\ldots,\alpha_n)$ for some $j$ such that
$\alpha_{j}+1\neq \alpha_i$ for all $i\neq j$. If
$\kappa_{\alpha}$ is not $D$-multiplicity-free, then $\kappa_{\alpha^{\uparrow}}$ is not $D$-multiplicity-free.
\end{conjecture}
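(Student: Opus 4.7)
The plan is to route the conjecture through Theorem~\ref{thm:fundamentalRelationship}(II): realize $\alpha$ and $\alpha^\uparrow$ as $w\lambda$ and $w\lambda'$ for a \emph{common} $w \in {\mathfrak S}_n$, and then transfer a repeated $L_I$-irreducible constituent from $H^0(X_w,\lineb{\lambda})^{*}$ to $H^0(X_w,\lineb{\lambda'})^{*}$.

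For the setup, let $\lambda$ be the decreasing rearrangement of the parts of $\alpha$, and pick $w \in {\mathfrak S}_n$ of minimal length with $w\lambda = \alpha$ and $I \subseteq J(w)$. Such a $w$ exists precisely because $\alpha \in {\sf Comp}_n(D)$: the latter says no strict descent of $\alpha$ lies in $I$, leaving room to tiebreak each $i \in I$ so that $w^{-1}(i) > w^{-1}(i+1)$. Let $p = w^{-1}(j)$. The hypothesis $\alpha_i \neq \alpha_j + 1$ for $i \neq j$ forces $\lambda_{p-1} \geq \alpha_j + 2$ (or $p=1$), so $\lambda' := \lambda + e_p$ is still a partition, and the same $w$ satisfies $w\lambda' = \alpha^\uparrow$. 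Thus by Theorem~\ref{thm:fundamentalRelationship}(I), both $\kappa_\alpha$ and $\kappa_{\alpha^\uparrow}$ are characters of Demazure $L_I$-modules on $X_w$; the non-$D$-multiplicity-freeness of $\kappa_\alpha$ becomes the statement that some irreducible $L_I$-module appears at least twice in $H^0(X_w,\lineb{\lambda})^{*}$.

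For the transfer step, the cleanest route is combinatorial: apply the C.~Ross--A.~Yong rule \cite{Ross.Yong} for the $D$-split expansion of $\kappa_\alpha$, and construct an ``append a single box in row $j$'' operation on the contributing objects. Given two distinct Ross--Yong objects for $\alpha$ witnessing a repeated $D$-Schur type $(\mu^1,\ldots,\mu^{k+1})$, the appended objects should be two distinct Ross--Yong objects for $\alpha^\uparrow$ witnessing the shifted type $(\mu^1,\ldots,\mu^f+\square,\ldots,\mu^{k+1})$, where $f$ is the block of $D$ containing $j$. The distinctness hypothesis $\alpha_j + 1 \neq \alpha_i$ is precisely what guarantees that no conflict arises: in the Ross--Yong combinatorics, the append targets a row height not already present, so the split structure is unaffected. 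The main obstacle is verifying that all of the appended objects land in a \emph{single} shifted $D$-Schur type, so that the multiplicity is preserved rather than distributed among several shifted types, and that no ``cancelling'' Ross--Yong object for $\alpha^\uparrow$ reduces the multiplicity. This requires a careful case analysis based on the relative positions of the appended box and the split boundaries dictated by $D$, and is the crux of the argument.
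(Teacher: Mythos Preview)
The statement you are attempting is \emph{Conjecture}~\ref{conj:upone}, and the paper does not prove it. The authors only establish the special case $D=[n-1]$ (the proposition immediately following Proposition~\ref{prop:allthesame}), and that proof goes through Theorem~\ref{thm:mfKey}: if $\kappa_\alpha$ is not $[n-1]$-multiplicity-free then $\alpha$ contains some pattern in ${\sf KM}$, and one checks directly that the hypothesis $\alpha_j+1\neq\alpha_i$ ensures $\alpha^\uparrow$ still contains that pattern. For general $D$ the paper explicitly says ``it seems plausible to prove Conjecture~\ref{conj:upone} using \cite[Theorem~1.1]{Ross.Yong}. We hope to address this in a sequel.'' So there is no paper proof to compare against.

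Your proposal is a plan, not a proof. The representation-theoretic setup in your first paragraph accomplishes only a reformulation: knowing that $H^0(X_w,\lineb{\lambda})^*$ has a repeated $L_I$-constituent tells you $X_w$ is not $L_I$-spherical, but Theorem~\ref{thm:fundamentalRelationship}(II) does \emph{not} then let you conclude that $\kappa_{w\lambda'}$ has multiplicity---the failure of sphericality could be witnessed at some other weight entirely. You seem to recognize this, since you pivot to the Ross--Yong combinatorics; but that pivot is exactly the step the paper flags as open. Your own text concedes the point: ``This requires a careful case analysis \ldots\ and is the crux of the argument.'' Concretely, the obstacles you name---that the two appended Ross--Yong objects land in a \emph{single} $D$-Schur type, and that no new object for $\alpha^\uparrow$ collides with them---are genuine, and you have not supplied the analysis. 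There is also a glossed-over subtlety in the setup: for $\lambda':=\lambda+e_p$ to remain a partition you need $p$ to be the \emph{smallest} index with $\lambda_p=\alpha_j$, and you must simultaneously arrange $w^{-1}(j)=p$ and $I\subseteq J(w)$; this can be done (the hypothesis $\alpha^\uparrow\in{\sf Comp}_n(D)$ rules out the problematic constraint at position $j$), but it deserves an argument rather than the phrase ``leaving room to tiebreak.''

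In short: your outline coincides with what the authors themselves propose as the likely route, but neither you nor they have carried out the transfer step, and until that is done the conjecture remains open.
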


\begin{lemma}
\label{lemma:Nov25}
Suppose $I\subseteq J(w)$ and $D=[n-1]-I$. Let $\lambda^{\text{distinct}}=(\lambda_1>\lambda_2>\ldots
>\lambda_n)$. Then $w\lambda^{\text{distinct}}\in {\sf Comp}_n(D)$. 
\end{lemma}
\begin{proof}
If $d\not\in D$ then $d\in I\subseteq J(w)$. Hence $w^{-1}(d)>w^{-1}(d+1)$ and
$\lambda_{w^{-1}(d)}^{\text{distinct}}<
\lambda_{w^{-1}(d+1)}^{\text{distinct}}$. So all descents of  $w\lambda^{\text{distinct}}$ must be in $D$, as desired.
\end{proof}

\begin{proposition}
\label{prop:allthesame}
Conjecture~\ref{conj:upone} $\Rightarrow$ Conjecture~\ref{conj:sphericalGeneric}.
\end{proposition}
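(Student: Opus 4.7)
The plan is direct. Given $X_w$ not $L_I$-spherical, Theorem~\ref{thm:fundamentalRelationship}(II) produces $\lambda \in {\sf Par}_n$ with $\kappa_{w\lambda}$ not $D$-multiplicity-free. I will construct a chain of Conjecture~\ref{conj:upone}-valid up-moves starting at $\alpha := w\lambda$ and ending at $\beta := w\lambda^{\text{staircase}} + (c,\ldots,c)$ with $c := \lambda_1$; iterating Conjecture~\ref{conj:upone} along this chain forces $\kappa_\beta$ to be non-$D$-multiplicity-free. Because each Demazure operator $\pi_j$ commutes with multiplication by the symmetric factor $(x_1\cdots x_n)^c$, one has $\kappa_{\gamma + (c,\ldots,c)} = (x_1\cdots x_n)^c\kappa_\gamma$ for every $\gamma \in {\sf Comp}_n$; in the $D$-Schur expansion this only shifts each partition $\lambda^t$ uniformly by $c$, preserving all multiplicities. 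Hence non-$D$-multiplicity-freeness of $\kappa_\beta$ transfers to $\kappa_{w\lambda^{\text{staircase}}}$, which is what Conjecture~\ref{conj:sphericalGeneric} asserts.

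To build the chain, observe that $\alpha_i = \lambda_{w^{-1}(i)} \leq \lambda_1 = c < c+1 \leq \beta_i$, so $\alpha \leq \beta$ coordinatewise. Enumerate positions $i_1, \ldots, i_n$ so that $\beta_{i_k} = c+k$---equivalently, $i_k = w(n-k+1)$; then $\alpha_{i_k} = \lambda_{n-k+1}$, making the sequence $\alpha_{i_\bullet}$ weakly increasing in $k$ and bounded above by $c$. I propose to process positions in order of \emph{decreasing} $\beta$-value ($i_n$ first, $i_{n-1}$ next, \ldots, $i_1$ last), and for each $k$ to increment coordinate $i_k$ from $\alpha_{i_k}$ up to $c+k$ one unit per step.

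The main obstacle is verifying the two hypotheses of Conjecture~\ref{conj:upone} at every step. For uniqueness: at a step of processing $i_k$ with current value $v \in [\alpha_{i_k},\, c+k-1]$, already-processed positions $i_{k'}$ ($k' > k$) sit at $c+k' \geq c+k+1 > v+1$, while untouched positions $i_{k'}$ ($k' < k$) sit at $\alpha_{i_{k'}} \leq \alpha_{i_k} \leq v < v+1$; hence the new value $v+1$ coincides with no other coordinate, establishing $\alpha_j + 1 \neq \alpha_i$. For the descent hypothesis $\alpha^\uparrow \in {\sf Comp}_n(D)$, no new descent can appear at position $i_k - 1$ (it would require the left neighbor to be simultaneously $\leq v$ and $> v+1$, impossible), and the only way a new descent can appear at $i_k$ itself is if the right neighbor at position $i_k+1$ currently equals $v$. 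A direct analysis shows this forces $i_k + 1 = i_{k'}$ for some $k' < k$ with $\alpha_{i_{k'}} = \alpha_{i_k}$, whence $w^{-1}(i_k) = n-k+1 < n-k'+1 = w^{-1}(i_k+1)$; therefore $i_k \notin J(w) \supseteq I$ and so $i_k \in D$, making the new descent admissible. The chain thus stays in ${\sf Comp}_n(D)$ throughout, completing the proof.
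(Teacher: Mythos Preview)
Your proof is correct and follows the same overall strategy as the paper: use Theorem~\ref{thm:fundamentalRelationship}(II) to obtain a starting $\lambda$, apply Conjecture~\ref{conj:upone} along a chain of single-coordinate increments to reach a shifted staircase $w\lambda^{\text{staircase}}+(c,\ldots,c)$, and then strip off the shift via the factorization $\kappa_{\gamma+(c^n)}=(x_1\cdots x_n)^c\kappa_{\gamma}$ (the paper's (\ref{eqn:July6zzz}) and (\ref{eqn:July6abc})).

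The organizational difference is that the paper proceeds in two phases---first reducing to a $\lambda^{\text{distinct}}$ with strictly decreasing parts (this is the intermediate Conjecture~\ref{conj:weakerver}), and only then climbing to a shifted staircase---whereas you do it in a single sweep, processing positions $w(1),w(2),\ldots,w(n)$ in order and driving each straight to its target value $c+n,\,c+n-1,\ldots$. Your version is more explicit about the two hypotheses of Conjecture~\ref{conj:upone} at each step: the paper asserts ``$w\lambda^{(1)}\in{\sf Comp}_n(D)$'' and ``by applying Conjecture~\ref{conj:upone} $j$-many times'' without spelling out the order of increments or the descent check, while you verify directly that any newly created descent must lie at a position $i_k$ with $w^{-1}(i_k)<w^{-1}(i_k+1)$, hence in $D$. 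The paper's two-phase split makes the uniqueness hypothesis trivial in the second phase (all parts already distinct), at the cost of an extra layer of indirection; your single-phase chain is tighter but requires the descent argument you gave.
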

\begin{proof}
Suppose $X_w$ is not $L_I$-spherical for some $I\subseteq J(w)$. 

First we show the weaker claim that Conjecture~\ref{conj:upone} $\Rightarrow$ Conjecture~\ref{conj:weakerver}:
By Theorem~\ref{thm:fundamentalRelationship}(II),
there exists $\lambda$ such that $w\lambda\in {\sf Comp}_n(D)$ and $\kappa_{w\lambda}$ is not $D$-multiplicity-free. If $\lambda^{(0)}:=\lambda$ has distinct parts, let $\lambda^{\text{distinct}}:=\lambda$. If not, consider the smallest $j_0$ such that $\lambda_{j_0}=\lambda_{j_0+1}$. Then define
\begin{equation}
\label{eqn:Nov30abcdef} 
\lambda^{(0,j)}=(\lambda_1+1,\lambda_2+1,\ldots,\lambda_j+1,\lambda_{j+1},\lambda_{j+2},\ldots,\lambda_{j_0},\lambda_{j_0+1},\ldots,\lambda_n), \text{ \ for $1\leq j\leq j_0$}.
\end{equation}
We let $\lambda^{(0,0)}:=\lambda^{(0)}$.
Since $\lambda^{(0,j)}$ and $\lambda^{(0,j-1)}$ only differ at position $j$ (by a single increment),
it is immediate from the
definitions (\ref{eqn:wlambda}) and (\ref{eqn:Nov30abcdef}) as well as the minimality of $j_0$ that the set of descent positions ${\sf Desc}(w\lambda^{(0,j)})$ of $w\lambda^{(0,j)}$ contains ${\sf Desc}(w\lambda^{(0,j-1)})$ for $1<j\leq j_0$. Now repeat this modification
with $\lambda^{(1)}:=\lambda^{(0,j_0)}$ replacing the role of $\lambda^{(0)}$. The minimal $j_1$ such that
$\lambda^{(1)}_{j_1}=\lambda^{(1)}_{j_1+1}$ satisfies $j_1>j_0$; we similarly construct new partitions
$\lambda^{(1,j)}$ where $1\leq j\leq j_1$. Hence after a finite number of iterations, we arrive at $\lambda^{\text{distinct}}:=\lambda^{(q)}:=\lambda^{(q-1,j_{q-1})}$ with distinct parts. Inductively,
\[{\sf Desc}(w\lambda^{(p,j)})\subseteq
{\sf Desc}(w\lambda^{(q)})\subseteq D,\]
where the rightmost containment is by Lemma~\ref{lemma:Nov25}. Hence, 
$w\lambda^{(p,j)}\in {\sf Comp}_n(D)$ for $0\leq p< q$ and $1\leq j\leq j_p$.
Conjecture~\ref{conj:upone} says that if we have $\alpha=w\lambda^{(p,j-1)}\in {\sf Comp}_n(D)$ and $\alpha^{\uparrow}=w\lambda^{(p,j)}\in {\sf Comp}_n(D)$ such that $\kappa_{\alpha}$ is not $D$-multiplicity-free, then $\kappa_{\alpha^{\uparrow}}$ is not $D$-multiplicity-free. Applying Conjecture~\ref{conj:sphericalGeneric}
repeatedly we see by induction that $\kappa_{w\lambda^{\text{distinct}}}$ is not $D$-multiplicity-free, as desired.

Conjecture~\ref{conj:upone} $\Rightarrow$ Conjecture~\ref{conj:sphericalGeneric}:
By the previous paragraph, assume there exists $\lambda^{[0]}=\lambda^{\text{distinct}}$
such that $\kappa_{w\lambda^{\text{distinct}}}$ is not $D$-multiplicity-free. If $\lambda^{[1]}:=(\lambda^{[0]}_1+1,\lambda^{[0]}_2, \ldots,\lambda^{[0]}_n)$, then by Conjecture~\ref{conj:upone},
$\kappa_{w\lambda^{[1]}}$ is not $D$-multiplicity-free. Iterating this argument, it follows that if 
$\lambda^{[1]}:=(\lambda^{[0]}_1+h,\lambda^{[0]}_2, \ldots,\lambda^{[0]}_n)$ 
for any $h\geq 1$, the same conclusion
holds. For the same reason, if $h>h'$ we can ensure 
$\lambda^{[2]}=(\lambda^{[0]}_1+h,\lambda^{[0]}_2+h',\lambda^{[0]}_3,\ldots, \lambda^{[0]}_n)$ has that $\kappa_{w\lambda^{[2]}}$ is not $D$-multiplicity-free. Continuing this line of
reasoning, we can conclude that there is $r\in {\mathbb N}$ such that 
$\overline\lambda:=\lambda^{\text{staircase}}+(r,r,\ldots,r)$ 
and $\kappa_{w\overline{\lambda}}$ is not $D$-multiplicity-free.

Now, either directly from the definition of key polynomials from Section~\ref{sec:keysxyz}, or, \emph{e.g.,} 
from Kohnert's rule \cite{Kohnert} we have:
\begin{equation}
\label{eqn:July6zzz}
\kappa_{w\overline\lambda}=\left(\prod_{i=1}^n x_i^r\right) \times \kappa_{w\lambda^{\text{staircase}}}.
\end{equation}
If $\mu\in {\sf Par}_d$ then it is easy to see from the definition of Schur polynomials that
\begin{equation}
\label{eqn:July6abc}
(y_1\ldots y_d)^r\times s_{\mu}(y_1,\ldots,y_d)=s_{r^d+\mu}(y_1,\ldots, y_d),
\end{equation}
where 
$r^d+\mu=(r+\mu_1,r+\mu_2,\ldots,r+\mu_d)$.

Combining (\ref{eqn:July6zzz}), (\ref{eqn:July6abc}) and the presumption that
$\kappa_{w\overline\lambda}$ is not $D$-multiplicity-free, we see that
$\kappa_{w\lambda^\text{staircase}}$ is not $D$-multiplicity-free, as desired.
\end{proof}

In turn, it seems plausible to prove Conjecture~\ref{conj:upone} using \cite[Theorem~1.1]{Ross.Yong}. We hope to
address this in a sequel. For now, we offer the following evidence for its correctness.

\begin{proposition}
Conjecture~\ref{conj:upone} holds for $D=[n-1]$. 
\end{proposition}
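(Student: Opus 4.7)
The plan is to reduce this to a finite combinatorial check via Theorem~\ref{thm:mfKey}. Since $D = [n-1]$, the notion of $D$-multiplicity-freeness coincides with multiplicity-freeness in the monomial basis, and Theorem~\ref{thm:mfKey} then says $\kappa_\alpha$ fails to be $[n-1]$-multiplicity-free precisely when $\alpha$ contains a composition pattern from ${\sf KM} = \{(0,1,2), (0,0,2,2), (0,0,2,1), (1,0,3,2), (1,0,2,2)\}$. So Conjecture~\ref{conj:upone} for $D = [n-1]$ reduces to: if $\alpha$ contains some $\pi \in {\sf KM}$ at positions $J = (j_1 < \cdots < j_k)$, then $\alpha^{\uparrow}$ contains some (possibly different) pattern in ${\sf KM}$.

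First, if $j \notin J$, then the subtuple at $J$ is unchanged and the same pattern $\pi$ persists. So I may assume $j = j_s \in J$ for some $s$. The key leverage from the hypothesis that $\alpha_j + 1 \neq \alpha_i$ for all $i \neq j$ is a one-unit integer bonus: no new equalities are created in $\alpha^{\uparrow}$, and whenever $\alpha_j < \alpha_i$ in $\alpha$, in fact $\alpha_j + 1 < \alpha_i$ in $\alpha^{\uparrow}$ (so the gap widens by at least one); analogously for $\alpha_j > \alpha_i$.

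Next I would run the case analysis over the $3+4+4+4+4 = 19$ pairs (pattern, incremented position). For example: incrementing $j_1$ in a $(0,1,2)$ pattern preserves $(0,1,2)$ at $J$ because the integer gap at the left only widens. Incrementing $j_1$ in a $(0,0,2,2)$ pattern yields values $(\alpha_{j_1}{+}1,\alpha_{j_1},\alpha_{j_3},\alpha_{j_3})$ at $J$, which is a $(1,0,2,2)$ pattern. Incrementing $j_1$ in a $(0,0,2,1)$ pattern gives $(a{+}1, a, c, b)$ with $b \geq a+2$ (by distinctness) and hence $c \geq a+3$ (since $c > b$), which is exactly a $(1,0,3,2)$ pattern at $J$. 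The remaining cases are handled similarly: in each, one either keeps the original pattern (after verifying the magnitude constraints, which become strictly easier due to the integer-bonus) or exhibits a different pattern of ${\sf KM}$ at a subset of $J$.

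The main obstacle is the delicate bookkeeping in a few subcases where the increment threatens to collapse an order relation in the pattern, for example incrementing $j_2$ in a $(1,0,3,2)$ instance. There, the distinctness hypothesis forces $\alpha_{j_1} \geq \alpha_{j_2}+2$, which in turn (via the chain $\alpha_{j_3} > \alpha_{j_4} > \alpha_{j_1}$) forces $\alpha_{j_3} - \alpha_{j_2} \geq 4$ in $\alpha$, so that after incrementing $\alpha_{j_2}$ the required magnitude bound $\alpha_{j_3} - (\alpha_{j_2}+1) \geq 3$ still holds and the $(1,0,3,2)$ pattern survives at $J$. Once one verifies each of the 19 subcases supplies the right pattern in this fashion, the proposition follows from Theorem~\ref{thm:mfKey}.
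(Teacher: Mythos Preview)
Your proposal is correct and follows essentially the same approach as the paper: both invoke Theorem~\ref{thm:mfKey} and then verify by case analysis that if $\alpha$ contains a pattern from ${\sf KM}$, so does $\alpha^{\uparrow}$. The paper only spells out the $(1,0,3,2)$ case (where the same pattern persists at the same positions) and leaves the rest to the reader; your write-up is more thorough, and you correctly observe---something the paper's sketch glosses over---that in some subcases (e.g., incrementing $j_1$ in a $(0,0,2,2)$ or $(0,0,2,1)$ instance) one must exhibit a \emph{different} pattern from ${\sf KM}$ rather than the original one.
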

\begin{proof}
This follows from Theorem~\ref{thm:mfKey} in this fashion: Suppose $\kappa_{\alpha}$ is not $[n-1]$-multiplicity-free
since $(\alpha_a,\alpha_b,\alpha_c,\alpha_d)$ is the pattern $(1,0,3,2)$. If $j\not\in \{a,b,c,d\}$ then $\alpha^{\uparrow}$
still contains $(1,0,3,2)$. 
If $j=a$ then (by the hypothesis of Conjecture~\ref{conj:upone}) 
$\alpha_a+1\neq \alpha_d$ hence $\alpha^{\uparrow}$ contains $(1,0,3,2)$ at the same positions. The same conclusion
holds if $j=b,c,d$. Hence by Theorem~\ref{thm:mfKey}, $\kappa_{\alpha^{\uparrow}}$ is not $[n-1]$-multiplicity-free. The other cases are left to the reader.
\end{proof}

\section{Proof of the bigrassmannian theorem}\label{sec:bigrassmannianpf}

Using the preparation in Sections~\ref{sec:2} and~\ref{sec:4}, we are now ready to prove Theorem~\ref{thm:bigrassmannian}.

First, we prove that three classes (\ref{eqn:3cases}) of bigrassmannian $w\in {\mathfrak S}_n$ are $J(w)$-spherical. Suppose ${\text{code}}(w)=(0^f,a,0^g)$. Then the canonical reduced word (see Example~\ref{exa:canonicalred}) is 
\[R^{\text{canonical}}(w)=s_{f+a} s_{f+a-1}\cdots s_{f+2}s_{f+1}.\] 
By Lemma~\ref{lemma:bigrassfact}, $J(w)=\{f+a\}$. Since $R^{\text{canonical}}(w)$ uses distinct generators, it is the $J(w)$-witness, as desired.
Similarly, one argues the case that ${\text{code}}(w)=(0^f,1^b,0^g)$. Finally, suppose ${\text{code}}(w)=(0^f,2^2,0^g)$.
In this case, 
\[R^{\text{canonical}}(w)=s_{f+2}s_{f+1}s_{f+3}s_{f+2}.\] 
Since (by Lemma~\ref{lemma:bigrassfact}) $J(w)=\{f+2\}$ we see that
$R^{\text{canonical}}(w)$ is again a $J(w)$-witness, as desired.

Conversely, suppose that $w\in {\mathfrak S}_n$ is bigrassmannian, but not one of the three cases (\ref{eqn:3cases}). Thus, 
$D(w)$ either has at least three columns, or at least three rows. Assume it is the former case (the argument for the
latter case is similar). Look at the canonical filling of $D(w)$. In the northwest $2\times 3$ subrectangle, the filling,
read right to left and top down is
\begin{equation}
\label{eqn:June15aaa}
s_{f+3}s_{f+2}s_{f+1}s_{f+4}s_{f+3}s_{f+2}.
\end{equation}
Let $u$ be the associated permutation
and $R^u\in {\text{Red}}(u)$ be the expression (\ref{eqn:June15aaa}). $R^u$ is a subexpression of $R^{\text{canonical}}(w)$.
Hence by Theorem~\ref{thm:subword}, $u\leq v$. By inspection, any $R'\in {\text{Red}}(u)$ has at least two
$s_{f+2}$'s. By assumption, $J(w)=\{d\}$ where $d\geq f+3$ (here we are again using Lemma~\ref{lemma:bigrassfact}). So every $R'$ fails
(S.1') (with respect to $J(w)$). Thus by Proposition~\ref{prop:subtrick}, $w$ is not $J(w)$-spherical.

Next, we show that for $w\in {\mathfrak S}_n$ satisfying (\ref{eqn:3cases}), $X_w$ is $L_{J(w)}$-spherical. First suppose
\[{\text{code}}(w)\in \{(0^f,a,0^g),(0^f,1^b,0^g)\}.\] 
The above analysis shows that $R^{\text{canonical}}(w)$ satisfies
Theorem~\ref{thm:toric}(II). Hence $X_w$ is a toric variety (by the equivalence (I)$\iff$(II) of said theorem). By
Corollary~\ref{cor:geometricmono} (or Proposition~\ref{prop:geometricmono}), $X_w$ is $L_{J(w)}$-spherical. Lastly, suppose 
\[{\text{code}}(w)=(0^f,2^2,0^g).\] 
First, assume $f=0$. Hence in this case
the permutation is $w'=s_2 s_1 s_3 s_2\in {\mathfrak S}_4$. Now $w'=3412$ in one-line notation, and $J(w')={\{2\}}$.

\begin{claim}
\label{claim:3412spherical}
$X_{3412B} \subset GL_4/B$ is $L_{\{2\}}$-spherical. 
\end{claim}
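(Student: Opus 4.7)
The plan is to invoke Theorem~\ref{thm:fundamentalRelationship}(II). Since $J(3412)=\{2\}$, we take $I=\{2\}$ and $D=[3]-I=\{1,3\}$, reducing the claim to showing that for every $\lambda=(\lambda_1\geq\lambda_2\geq\lambda_3\geq\lambda_4)\in{\sf Par}_4$, the key polynomial $\kappa_{3412\cdot\lambda}=\kappa_{(\lambda_3,\lambda_4,\lambda_1,\lambda_2)}$ is $\{1,3\}$-multiplicity-free in $\Pi_D$.

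Writing $p=\lambda_1$, $q=\lambda_2$, $a=\lambda_3$, $b=\lambda_4$ (so $p\geq q\geq a\geq b\geq 0$) and using the reduced word $3412=s_2s_1s_3s_2$, I would compute $\kappa_{(a,b,p,q)}=\pi_2\pi_1\pi_3\pi_2(x_1^p x_2^q x_3^a x_4^b)$. Iterating the basic identity $\pi_j(x_j^e x_{j+1}^f)=s_{(e,f)}(x_j,x_{j+1})$ (valid for $e\geq f$) produces the intermediate expression
\[
\pi_1\pi_3\pi_2(x_1^p x_2^q x_3^a x_4^b)=\sum_{i=a}^{q}s_{(p,i)}(x_1,x_2)\,s_{(q+a-i,b)}(x_3,x_4).
\]
The final $\pi_2$ will then be analyzed either by a direct monomial expansion using the three elementary cases for $\pi_2(x_2^e x_3^f)$, or, more systematically, via the splitting rule of C.~Ross and the second author~\cite{Ross.Yong}; in either case producing the $D$-split-Schur expansion of $\kappa_{(a,b,p,q)}$.

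The core of the argument is to show that every coefficient in this expansion lies in $\{0,1\}$. Expanding the summands into monomials $x_1^{j_1}x_2^e x_3^f x_4^g$ and using $\pi_2(x_2^e x_3^f)=s_{(e,f)}(x_2,x_3)$ for $e\geq f$, $\pi_2(x_2^e x_3^f)=0$ for $e+1=f$, and $\pi_2(x_2^e x_3^f)=-s_{(f-1,e+1)}(x_2,x_3)$ for $e\leq f-2$, a target term $x_1^{\alpha_1}s_{(\mu,\nu)}(x_2,x_3)x_4^{\alpha_4}$ can receive at most one $+1$ contribution (Case~A: $(e,f)=(\mu,\nu)$) and at most one $-1$ contribution (Case~B: $(e,f)=(\nu-1,\mu+1)$). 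The main obstacle is to check that Case~B's validity always forces Case~A's validity at the same target, so that the two contributions cancel. I expect this to come down to a routine inequality chase: Case~B's constraints $i=\alpha_1-p+\nu-1\in[a,q]$ together with $i\leq\nu-1$ force $\nu\geq a+1$, and the partition inequality $a\geq b$ then yields $\nu\geq b$, which is the one Case~A condition not already implied by Case~B; the remaining Case~A inequalities follow routinely using $q\leq p$. Granted this matching of cancellations, $\kappa_{(a,b,p,q)}$ is $\{1,3\}$-multiplicity-free, and Theorem~\ref{thm:fundamentalRelationship}(II) then yields the desired $L_{\{2\}}$-sphericality of $X_{3412B}\subset GL_4/B$.
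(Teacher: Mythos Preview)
Your approach is correct and genuinely different from the paper's. The paper argues geometrically: it passes to $SL_4$, observes that the projection $SL_4/B^{ss}\to SL_4/P^{ss}_{\{1,3\}}$ restricts to an $L^{ss}_{\{2\}}$-equivariant birational morphism from $X_{3412B^{ss}}$ onto the whole of $SL_4/P^{ss}_{\{1,3\}}$, and then invokes the Avdeev--Petukhov reduction together with Stembridge's classification \cite{Stem:Weyl} to conclude that $SL_4/P^{ss}_{\{1,3\}}$ is $L^{ss}_{\{2\}}$-spherical. Your route is instead a direct verification of the multiplicity-freeness criterion of Theorem~\ref{thm:fundamentalRelationship}(II): you compute $\kappa_{(a,b,p,q)}=\pi_2\pi_1\pi_3\pi_2(x_1^px_2^qx_3^ax_4^b)$ explicitly, and show that for every target $x_1^{\alpha_1}s_{(\mu,\nu)}(x_2,x_3)x_4^{\alpha_4}$ the unique possible $-1$ contribution (from $(e,f)=(\nu-1,\mu+1)$) is always accompanied by the unique possible $+1$ contribution (from $(e,f)=(\mu,\nu)$).

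I checked your inequality chase and it goes through. From the Case~B constraints one gets $\alpha_1\le p$ (hence $i_B\le\nu-1$) and $i_B\ge a$, forcing $\nu\ge a+1\ge b+1>b$; one also gets $\mu+1\le q+a-i_B\le q$, so $\mu\le q-1$, which together with $\alpha_1\le p$ yields both $i_A=\mu+\alpha_1-p\le q-1$ and $\mu\le q\le p$; the inequality $i_A\ge a$ is immediate from $i_A>i_B\ge a$; and $\nu\le q+a-i_A$ is equivalent to $\alpha_4\ge b$, which is already a Case~B constraint. So every Case~A condition is implied. The upshot is that your argument is self-contained (no appeal to \cite{Stem:Weyl} or \cite{AP14}) and purely combinatorial, at the cost of a hands-on computation; the paper's proof is shorter but imports the external classification.
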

\noindent
\emph{Proof of Claim~\ref{claim:3412spherical}:}
Fix $B^{ss} := SL_4 \cap B$, and $T^{ss} := SL_4 \cap T$ as our choice of Borel subgroup and maximal Torus in $SL_4$. For $I \subseteq 2^{[3]}$, let $L^{ss}_I \leq P^{ss}_I$ denote the associated Levi and parabolic subgroups in $SL_4$. We prove $X_{3412B^{ss}} \subset SL_4/B^{ss}$ is $L^{ss}_{\{2\}}$-spherical. Since $SL_n/B^{ss} \cong GL_n/B$ as $SL_n$-varieties, this induces an $L^{ss}_{\{2\}}$-equivariant isomorphism between $X_{3412B^{ss}}$ and $X_{3412B}$. Thus if $X_{3412B^{ss}}$ is $L^{ss}_{\{2\}}$-spherical, then $X_{3412B}$ is $L^{ss}_{\{2\}}$-spherical. Since $L^{ss}_{\{2\}} \leq L_{\{2\}}$, this in turn implies $X_{3412B}$ is $L_{\{2\}}$-spherical.

The canonical projection $\pi : SL_4/B^{ss} \rightarrow SL_4 / P^{ss}_{\{1,3\}}$ induces a birational morphism \[X_{3412B^{ss}} \rightarrow X_{3412P^{ss}_{\{1,3\}}} \cong SL_4 / P^{ss}_{\{1,3\}}.\] Since $\pi$ is $SL_4$-equivariant, this birational morphism is $L^{ss}_{\{2\}}$-equivariant. Thus $X_{3412B^{ss}}$ is $L^{ss}_{\{2\}}$-spherical if and only if $SL_4 / P^{ss}_{\{1,3\}}$ is $L^{ss}_{\{2\}}$-spherical. As noted in the proof of Theorem~\ref{prop:agreewithMWZ}, the latter holds if and only if $SL_4 / P^{ss}_{\{2\}} \times SL_4 / P^{ss}_{\{1,3\}}$ is spherical for the diagonal $SL_4$ action. Finally, by~\cite{Stem:Weyl}[Corollary 1.3.A(ii)] this diagonal action is spherical. \qed

For general $f$, since $w=s_{f+2} s_{f+1} s_{f+3} s_{f+2}$, 
in fact 
$w=\phi(w')$ where $\phi$ is the Dynkin diagram embedding of $\dynkin[labels={1',2',3'}, edge length=0.5cm]A3$
into $\dynkin[labels={1,2,n-1,n}, edge length=0.5cm]A{}$ that sends 
$1'\mapsto f+1, 2'\mapsto f+2, 3'\mapsto f+3$.
This induces a map of the Weyl groups that sends $w'$ to $w$. Now Claim~\ref{claim:3412spherical} and Proposition~\ref{prop:dynkingeo} imply that $X_w$ is $L_{J(w)}$-spherical.

It remains to show that if $w\in {\mathfrak S}_n$ does not satisfy (\ref{eqn:3cases}), then $X_w$ is not $L_{J(w)}$-spherical. Now,
$D(w)$ either contains a $2\times 3$ rectangle or a $3\times 2$ rectangle. Let us assume we are in the former 
case (the other case is similar, and left to the reader).

\begin{Claim}
\label{claim:June4abc}
If $D=\{1,2,3,\ldots,a-1,a+1,a+2\}$, and $a\geq 3$, then $\kappa_{0^a,2,1}$ is not $D$-multiplicity-free.
$s_{\emptyset^{a-3},(1),(1),(1,0),\emptyset,\ldots,\emptyset}$ appears in the expansion (\ref{eqn:June12abc}) of $\kappa_{0^a,2,1}$, with multiplicity (at least) $2$. 
\end{Claim}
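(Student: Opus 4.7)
The plan is to invoke Kohnert's formula $\kappa_\alpha = \sum_D x^{\wt(D)}$, where $D$ ranges over the Kohnert diagrams of $\alpha$ (equivalently, to apply the Ross--Yong splitting rule \cite{Ross.Yong}), and exhibit two distinct Kohnert diagrams of $\alpha = (0^a, 2, 1)$ whose common weight is $x_{a-2} x_{a-1} x_a$. Since this monomial has degree $1$ exactly at positions $a-2, a-1, a$, and since the blocks cut out by $D$ are the singletons $\{x_j\}$ for $j \neq a$ together with $\{x_a, x_{a+1}\}$ at block index $a$, any $D$-Schur $s_{(\mu^1,\ldots,\mu^{k+1})}$ containing this monomial must have $\mu^{a-2} = \mu^{a-1} = (1)$, $\mu^a = (1, 0)$, and $\mu^i = \emptyset$ otherwise. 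Consequently the unique $D$-Schur carrying the monomial $x_{a-2} x_{a-1} x_a$ is $s_{\emptyset^{a-3}, (1), (1), (1, 0), \emptyset, \emptyset}$, so the two diagrams directly force its coefficient to be at least $2$.

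The diagram of $\alpha = (0^a, 2, 1)$ is $\{(a+1, 1), (a+1, 2), (a+2, 1)\}$, so every Kohnert diagram has exactly two boxes in column $1$ and one in column $2$. The two candidate diagrams I propose are
\[ T_1 = \{(a-1, 1), (a, 1), (a-2, 2)\} \quad\text{and}\quad T_2 = \{(a-2, 1), (a, 1), (a-1, 2)\}, \]
both of weight $x_{a-2} x_{a-1} x_a$ and clearly distinct (their column-$1$ supports differ). To reach $T_1$: first move $(a+2, 1)$ up past the filled $(a+1, 1)$ to $(a, 1)$; then climb the column-$2$ box one row at a time through $(a, 2) \to (a-1, 2) \to (a-2, 2)$; finally move $(a+1, 1)$ up past the filled $(a, 1)$ to $(a-1, 1)$. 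To reach $T_2$: first climb $(a+2, 1)$ through $(a, 1) \to (a-1, 1) \to (a-2, 1)$; then climb $(a+1, 2)$ through $(a, 2) \to (a-1, 2)$; finally move $(a+1, 1)$ into the now-empty $(a, 1)$. At each step the moved box is the rightmost box of its row, and the target is the first empty cell above in the same column, so the moves are legal.

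The main obstacle is the careful state-tracking needed to verify each individual Kohnert step: in particular, one must confirm that $(a+1, 1)$ remains ``shielded'' from the rightmost-of-row condition until the column-$2$ box has migrated out of row $a+1$, and that the ``jumping'' moves in the two sequences land precisely as claimed. This is routine but non-trivial bookkeeping, handled by explicitly recording the diagram state after each of the five or six moves. An alternative that avoids Kohnert diagrams altogether would apply the Ross--Yong rule \cite{Ross.Yong} directly to produce two distinct split-tableau certificates both indexed by $((1), (1), (1, 0))$ on the blocks $(X_{a-2}, X_{a-1}, X_a)$; this is presumably the route envisioned by the paper's earlier reference to \cite{Ross.Yong}.
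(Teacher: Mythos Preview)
Your argument is correct and takes a genuinely different route from the paper's. The paper invokes the Ross--Yong rule (stated there as Theorem~\ref{RYthm}) directly: it computes $w[\alpha]=s_{a+1}s_{a+2}s_{a+1}\equiv s_{a+2}s_{a+1}s_{a+2}$ and $T[\alpha]$, then exhibits the two tableau sequences
\[
(\emptyset^{a-3},\framebox{$a{+}1$},\framebox{$a{+}2$},\framebox{$a{+}1$},\emptyset,\ldots)\quad\text{and}\quad(\emptyset^{a-3},\framebox{$a{+}2$},\framebox{$a{+}1$},\framebox{$a{+}2$},\emptyset,\ldots),
\]
each of shape $(\emptyset^{a-3},(1),(1),(1),\emptyset,\ldots)$, and checks conditions (a)--(d) including the Edelman--Greene insertion. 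Your approach sidesteps the Ross--Yong machinery entirely: you show via explicit Kohnert moves that $[x_{a-2}x_{a-1}x_a]\kappa_\alpha\geq 2$, then observe that because the blocks at positions $a-2,a-1$ are singletons and the block at position $a$ has size~$2$, exactly one $D$-Schur (with monomial coefficient~$1$) carries $x_{a-2}x_{a-1}x_a$; hence its expansion coefficient is at least~$2$. This is more elementary---no insertion algorithm is needed---but it depends on the lucky fact that the chosen monomial is isolated in a single $D$-Schur; the paper's method would still work when no such ``extremal'' monomial is available. Your closing remark about the Ross--Yong alternative is exactly what the paper does.

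One small imprecision: your sentence ``the blocks cut out by $D$ are the singletons $\{x_j\}$ for $j\neq a$ together with $\{x_a,x_{a+1}\}$'' is not literally accurate (for instance $x_{a+1}$ is in the size-$2$ block, and if $n>a+3$ the last block is not a singleton). This does not affect the argument, since the monomial $x_{a-2}x_{a-1}x_a$ has degree~$0$ in every variable beyond $x_a$, forcing the remaining $\mu^i$ to be empty regardless of block size; but you should phrase it more carefully.
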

\noindent
\emph{Proof of Claim~\ref{claim:June4abc}:} We recall \cite[Theorem~1.1]{Ross.Yong} which gives a nonnegative combinatorial rule to compute the expansion (\ref{eqn:June12abc})
of $f=\kappa_{\alpha}$ for any $\alpha\in {\sf Comp}_n(D)$. Let $w[\alpha]$ be the unique
permutation in ${\mathfrak S}_{\infty}$ such that $\text{code}(w[\alpha])=\alpha$ (ignoring any trailing $0$'s).
That such a permutation exists and is unique follows from, \emph{e.g.}, \cite[Proposition~2.1.2]{Manivel}.

 We now construct a tableau $T[\alpha]$. Given $w^{(1)}=w[\alpha]$, let $i_1$ be the position of
the last descent of $w^{(1)}$, and let $i_2$ be the location of the rightmost descent left of $i_1$ in 
$w^{(1)}s_{i_1}$ (so $i_2<i_1$). Repeat, defining 
$i_{j}$ to be the position of the rightmost descent to the left of $i_{j-1}$ in $w^{(1)}s_{i_1}s_{i_2}\cdots s_{i_{j-1}}$.
Suppose no descent appears left of $i_j$ in $w^{(1)}s_{i_1}s_{i_2}\cdots s_{i_{j}}$. In that case, stop, and, we define the first column
of $T[\alpha]$ to be filled by $i_1>i_2>\ldots>i_j$ (from bottom to top). Now let $w^{(2)}=w^{(1)}s_{i_1}s_{i_2}\cdots s_{i_{j}}$
and similarly we determine the entries of the second column. We repeat until we arrive at $k$ such that $w^{(k)}=id$.

An \emph{increasing tableau} $T$ of shape $\lambda$ is a filling of the Young diagram $\lambda$ with positive integers that is strictly increasing, left to right along rows, and top to bottom along columns. Let $\text{row}(T)$ be the right to left, top to bottom row reading word of $T$. Also let $\text{min}(T)$ be the
value of the minimum entry of $T$.

Given ${\bf a}=(a_1,a_2,\ldots)$ such that $s_{a_1}s_{a_2}\cdots$ is a reduced expression (for some
permutation), we will let $\tt{EGLS}({\bf a})$ be the \emph{Edelman-Greene column insertion tableau}; we
refer to \cite[Section~2.1]{Ross.Yong} for a summary of this well-known concept from algebraic combinatorics. 
Below, we will mildly abuse notation and refer to ${\bf a}$ and $s_{a_1}s_{a_2}\cdots$ interchangeably.

\begin{theorem}[Theorem~1.1 of \cite{Ross.Yong}]\label{RYthm}
Let $\alpha\in {\sf Comp}_n(D)$ and $f=\kappa_{\alpha}$. The coefficient $c_{\lambda^1,\ldots,\lambda^k}$ in the
expansion (\ref{eqn:June12abc}) counts the number of sequences of increasing tableaux $(T_1,\ldots,T_k)$ such that
\begin{itemize}
\item[(a)] $T_i$ is of shape $\lambda^i$
\item[(b)] $\text{min}(T_1)>0,\text{min}(T_2)>d_1,\ldots, \text{min}(T_k)>d_{k-1}$; 
\item[(c)] $\text{row}(T_1)\cdot \text{row}(T_2)\cdots \text{row}(T_k)\in \text{Red}(w[\alpha])$; and
\item[(d)] ${\tt{EGLS}}(\text{row}(T_1)\cdot \text{row}(T_2)\cdots \text{row}(T_k))=T[\alpha]$.
\end{itemize}
\end{theorem}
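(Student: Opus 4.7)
The plan is to deduce the rule from a Billey--Jockusch--Stanley-style (BJS) compatible-sequence model for $\kappa_\alpha$ combined with the Edelman--Greene (EG) correspondence. First I would record the standard expansion
\[
\kappa_\alpha \;=\; \sum_{(\mathbf{a},\mathbf{b})} x^{\mathbf{b}},
\]
where $\mathbf{a}=(a_1,\ldots,a_\ell)$ ranges over reduced words $s_{a_1}\cdots s_{a_\ell}\in\text{Red}(w[\alpha])$ and $\mathbf{b}=(b_1,\ldots,b_\ell)$ is a key-compatible sequence for $\mathbf{a}$ (weakly decreasing with strict decreases at ascents of $\mathbf a$ and $b_i\le a_i$). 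This can be proved by induction on $\ell(w[\alpha])$ directly from the Demazure recursion \eqref{eqn:keypidef}, or quoted from the literature.

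Next, since $\alpha\in{\sf Comp}_n(D)$, the key-compatibility forces each $b_i$ into one of the intervals $(d_{j-1},d_j]$ for $j=1,\ldots,k+1$, and correspondingly $a_i\in(d_{j-1},d_j)$. Grouping the positions of $\mathbf{a}$ by the block containing the matching $b_i$ partitions $\mathbf a$ into consecutive subwords $\mathbf{a}^{(1)},\ldots,\mathbf{a}^{(k+1)}$. Applying EG insertion to each $\mathbf{a}^{(j)}$ separately yields a pair $(T_j,Q_j)$ with $T_j$ an increasing tableau on entries from $(d_{j-1},d_j)$ (explaining condition (b) after reindexing) and $Q_j$ standard. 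By the classical EG argument underlying Stanley's symmetric-function formula, summing $x^{\mathbf{b}^{(j)}}$ over all compatible $\mathbf{b}^{(j)}$ that produce a prescribed $T_j$ equals $s_{\text{shape}(T_j)}(X_j)$. This is the source of the product of $D$-Schur polynomials appearing in~\eqref{eqn:June12abc}.

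The third step is to identify the global recording data. Concatenating $(Q_1,\ldots,Q_{k+1})$ into a single standard tableau $Q$ and EG-inserting $\mathbf{a}=\mathbf{a}^{(1)}\cdots\mathbf{a}^{(k+1)}$ produces a pair $(P,Q)$ where $P={\tt{EGLS}}(\text{row}(T_1)\cdots\text{row}(T_{k+1}))$. I would prove that $Q$ is independent of $\mathbf b$ and equals $T[\alpha]$: the rightmost-descent peeling procedure defining $T[\alpha]$ mirrors the column-by-column growth of the EG recording tableau of the canonical reduced expression obtained by reading $T[\alpha]$ down its columns from left to right. Granting this identification, the coefficient of $s_{\lambda^1,\ldots,\lambda^k}$ becomes the number of tuples $(T_1,\ldots,T_{k+1})$ satisfying (a), (b), (c), and whose combined row reading EG-inserts to $T[\alpha]$, which is exactly condition (d).

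The main obstacle will be the final identification of the global EG recording tableau with $T[\alpha]$. I would argue by induction on the number of columns of $T[\alpha]$, peeling one column at a time and matching it with the EG recording of a maximal strictly decreasing run of generators produced by the greedy descent extraction in the definition of $T[\alpha]$. Equivalently, one must verify that the column word of $T[\alpha]$ is a reduced expression of $w[\alpha]$ whose EG recording tableau equals $T[\alpha]$ itself; this is a combinatorial fact about the specific ``rightmost-descent'' reduced word and is the technical heart of the argument. The remainder of the proof is bookkeeping, together with a verification that the partition of $\mathbf a$ into $k+1$ subwords prescribed by $\mathbf b$ matches the partition prescribed by EG-recording positions lying in successive column ranges of $T[\alpha]$.
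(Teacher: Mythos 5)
First, a point of comparison: the paper does not prove this statement at all --- it is quoted verbatim from \cite[Theorem~1.1]{Ross.Yong} and used as a black box in Section~\ref{sec:bigrassmannianpf} --- so there is no internal proof to measure you against; I am assessing your sketch on its own terms. Your overall architecture (compatible sequences plus Edelman--Greene insertion, block by block) is the natural one and is in the spirit of the BKTY splitting formula for Schubert polynomials, but two of your steps are wrong as written.

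The first and more serious gap is in steps 1 and 3. Summing $x^{\mathbf b}$ over \emph{all} reduced words of $w[\alpha]$ together with BJS-type compatible sequences yields the \emph{Schubert} polynomial $\mathfrak{S}_{w[\alpha]}$, not $\kappa_\alpha$: for $\alpha=(1,0,1,0)$ one gets $x_1^2+x_1x_2+x_1x_3=\mathfrak{S}_{2143}$, whereas $\kappa_{(1,0,1,0)}=x_1x_2+x_1x_3$. The correct key-polynomial analogue (Reiner--Shimozono \cite{Reiner.Shimozono}) carries an extra condition that restricts the sum to a single Coxeter--Knuth class, i.e.\ to those $\mathbf a$ whose EG \emph{insertion} tableau is a prescribed tableau; that restriction is precisely the source of condition (d), and one must additionally prove that for $w=w[\alpha]$ the relevant class is the one with insertion tableau $T[\alpha]$. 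Your step 3 instead tries to recover (d) from the \emph{recording} tableau $Q$, asserting that $Q$ ``is independent of $\mathbf b$ and equals $T[\alpha]$.'' This cannot work: $Q$ is a standard tableau depending only on $\mathbf a$ and it varies over $\text{Red}(w[\alpha])$ (that variation is the whole content of the EG bijection), while $T[\alpha]$ is an increasing tableau of reflection indices and condition (d) constrains the insertion tableau $P$. Without building the $P=T[\alpha]$ restriction into your starting formula, your derivation produces the splitting rule for $\mathfrak{S}_{w[\alpha]}$ --- conditions (a)--(c) only --- which is a different (and for non-vexillary $w[\alpha]$, strictly larger) count.

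The second gap is in step 2: from $b_i\in(d_{j-1},d_j]$ and $b_i\le a_i$ you may conclude $a_i>d_{j-1}$, which gives the lower bound $\min(T_j)>d_{j-1}$ of condition (b), but your added upper bound $a_i<d_j$ is false. Condition (b) imposes no upper bound, and the paper's own application of the rule (Claim~\ref{claim:June4abc}) has tableaux in block positions $a-2,a-1,a$ whose entries are $a+1$ and $a+2$; imposing your constraint would make the rule false. Relatedly, the claim that summing $x^{\mathbf b^{(j)}}$ over compatible $\mathbf b^{(j)}$ with prescribed $T_j$ gives $s_{\text{shape}(T_j)}(X_j)$ needs the flag condition $b_i\le a_i$ to be analyzed, not assumed away; a priori one obtains a flagged Schur polynomial, and showing it equals the full Schur polynomial in the block variables is part of the work.
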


In our particular case, $\alpha=(0^a,2,1)$. Hence,
\[w[\alpha]=  12\cdots a \ a+3 \ a+2 \ a+1 \text{\ \ (one line notation)}=  s_{a+1}s_{a+2}s_{a+1} \equiv  s_{a+2}s_{a+1}s_{a+2}.\]
Then the two tableau sequences are 
\[(\emptyset^{a-3},\framebox{$a+1$}, \framebox{$a+2$}, \framebox{$a+1$},\emptyset,\ldots,
\emptyset) \text{\ \ and $(\emptyset^{a-3},\framebox{$a+2$}, \framebox{$a+1$}, \framebox{$a+2$},\emptyset,\ldots,\emptyset)$.}\]
Here  $T[\alpha]=
\ytableausetup{boxsize=1.5em}
\begin{ytableau}
\scalebox{0.7}{$a+1$} & \scalebox{0.7}{$a+2$} \\
\scalebox{0.7}{$a+2$} 
\end{ytableau}$. It is straightforward to check the conditions of Theorem~\ref{RYthm} are satisfied. In particular,
condition (d) is requiring that the Edelman-Greene column insertions of $a+1 \ a+2 \ a+1$ and $a+2 \ a+1 \ a+2$ both give
$T[\alpha]$; this is true.  (In fact these are the only valid tableau sequences for the datum, although we do not need this.)
\qed

\begin{Claim}
\label{claim:June4bbb}
Let $D'=\{1,2,3,\ldots,f,f+1,f+2,\ldots,f+(a-1),f+(a+1),f+(a+2)\}$ and $\alpha=(3^f,0^a,2,1)$
Then $\kappa_{\alpha}$ is not $D'$-multiplicity-free.
\end{Claim}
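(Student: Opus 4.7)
My plan is to extend the argument of Claim~\ref{claim:June4abc} by exhibiting two tableau sequences with the same shape that both satisfy conditions (a)--(d) of Theorem~\ref{RYthm} for $\kappa_{\alpha}$ with $\alpha = (3^f, 0^a, 2, 1)$ (read in ${\sf Comp}_n$ with $n = f{+}a{+}3$, i.e.,\ with an implicit trailing zero), thereby forcing a coefficient of at least $2$ in the $D'$-Schur expansion.

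First I would compute $w[\alpha]$ and its reduced word. It factors as $w[\alpha] = u_f \cdot v$, where $u_f$ is the bigrassmannian with code $(3^f, 0, 0, \ldots)$ and $v$ is the permutation arising from the trailing $(2,1)$ piece, having reduced words $s_{f+a+1} s_{f+a+2} s_{f+a+1}$ and $s_{f+a+2} s_{f+a+1} s_{f+a+2}$. The canonical reduced word of $u_f$ is $R(u_f) = (s_3 s_2 s_1)(s_4 s_3 s_2) \cdots (s_{f+2} s_{f+1} s_f)$, using only generators $s_1, \ldots, s_{f+2}$; those of $v$ use only $s_{f+a+1}, s_{f+a+2}$. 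Because $a \geq 3$, we have $(f{+}a{+}1) - (f{+}2) = a - 1 \geq 2$, so every prefix generator commutes with every suffix generator.

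Next, I would construct two tableau sequences $(T_1, \ldots, T_{f+a+2})$. For $i = 1, \ldots, f$, take $T_i$ to be the single-row increasing tableau $\begin{ytableau} i & i{+}1 & i{+}2 \end{ytableau}$, whose (Ross--Yong) row-reading equals $s_{i+2} s_{i+1} s_i$; the first $f$ tableaux then contribute exactly $R(u_f)$ to the concatenated row reading, and the min conditions $\min(T_i) = i > i{-}1 = d_{i-1}$ are automatic. Set $T_{f+1} = \cdots = T_{f+a-3} = \emptyset$. In blocks $X_{f+a-2}, X_{f+a-1}, X_{f+a}$, place $(\framebox{$f{+}a{+}1$}, \framebox{$f{+}a{+}2$}, \framebox{$f{+}a{+}1$})$ (Sequence~1) or $(\framebox{$f{+}a{+}2$}, \framebox{$f{+}a{+}1$}, \framebox{$f{+}a{+}2$})$ (Sequence~2), exactly mirroring Claim~\ref{claim:June4abc}. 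Finally set $T_{f+a+1} = T_{f+a+2} = \emptyset$. The suffix min conditions are immediate: entries $f{+}a{+}1, f{+}a{+}2$ strictly exceed $d_{f+a-3} = f{+}a{-}3$, $d_{f+a-2} = f{+}a{-}2$, $d_{f+a-1} = f{+}a{-}1$.

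Both sequences produce the same shape $((3)^f, \emptyset^{a-3}, (1), (1), (1), \emptyset, \emptyset)$; their row-reading concatenations are $R(u_f) \cdot (f{+}a{+}1)(f{+}a{+}2)(f{+}a{+}1)$ and $R(u_f) \cdot (f{+}a{+}2)(f{+}a{+}1)(f{+}a{+}2)$, both reduced words for $w[\alpha]$ by the prefix/suffix commutation. For condition (d), the key observation is that because every suffix letter strictly exceeds every prefix letter, the EG column insertion of the suffix portion proceeds into strictly lower rows of the tableau produced by the prefix, and the two possible orderings $(f{+}a{+}1)(f{+}a{+}2)(f{+}a{+}1)$ and $(f{+}a{+}2)(f{+}a{+}1)(f{+}a{+}2)$ insert to the same two-row piece (this is the content of Claim~\ref{claim:June4abc}'s EG computation, shifted in indices). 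Hence both concatenations EGLS-insert to the same $T[\alpha]$ (which has shape $(3^f, 2, 1)$), so both sequences contribute, giving coefficient $\geq 2$ for the $D'$-Schur polynomial $s_{(3)^f, \emptyset^{a-3}, (1), (1), (1), \emptyset, \emptyset}$ in $\kappa_\alpha$.

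The main technical step is condition (d): verifying that EG column insertion of the suffix really produces identical sub-tableaux under either suffix ordering. The hypothesis $a \geq 3$ is indispensable, providing both the commutation of prefix and suffix and the separator blocks $T_{f+1}, \ldots, T_{f+a-3}$ needed for the $\emptyset^{a-3}$ factor in the shape.
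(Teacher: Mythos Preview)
Your approach is correct, but it differs from the paper's. The paper's proof of Claim~\ref{claim:June4bbb} is a three-line reduction: by Kohnert's rule,
\[
\kappa_{(3^f,0^a,2,1)} \;=\; \Bigl(\prod_{i=1}^f x_i^3\Bigr)\cdot \kappa_{(0^a,2,1)}(x_{f+1},\ldots,x_{f+a+2}),
\]
so the $D'$-split coefficient of $s_{(3)^f,\lambda^1,\lambda^2,\ldots}$ in $\kappa_\alpha$ equals the $D$-split coefficient of $s_{\lambda^1,\lambda^2,\ldots}$ in $\kappa_{(0^a,2,1)}$, and Claim~\ref{claim:June4abc} immediately gives multiplicity. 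No EG insertion, no $T[\alpha]$ computation, no commutation argument.

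Your route instead re-runs the Ross--Yong machinery directly for the larger composition: you factor $w[\alpha]=u_f v$, build prefix tableaux $T_i=\fbox{$i$}\,\fbox{$i{+}1$}\,\fbox{$i{+}2$}$ for $i\le f$, and append the two Claim~\ref{claim:June4abc} suffixes. This works, and the verification of (a)--(c) is clean. Condition (d) is where you are a bit breezy: you assert that since the suffix letters $f{+}a{+}1,\,f{+}a{+}2$ exceed every prefix letter, EG column insertion of the suffix does not disturb the prefix tableau and reproduces the Claim~\ref{claim:June4abc} computation in rows $f{+}1,f{+}2$. That assertion is true (one checks that inserting $f{+}a{+}1$ or $f{+}a{+}2$ into the first column of $P_f$ simply appends at the bottom, and the EG special rule for the third insertion behaves identically in both orderings), and the resulting tableau does equal $T[\alpha]$, but strictly speaking you have not computed $T[\alpha]$ from the defining algorithm and matched it. If you want a self-contained argument along your lines, you should either carry out that small computation or cite a general compatibility of $T[\alpha]$ under disjoint-support products.

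What each approach buys: the paper's Kohnert factorization is short and conceptual---it identifies exactly why the prefix $3^f$ contributes nothing new and makes the reduction to Claim~\ref{claim:June4abc} transparent at the level of polynomials. Your approach is more hands-on and stays entirely within Theorem~\ref{RYthm}; it has the virtue of exhibiting the two witnessing tableau sequences explicitly, at the cost of re-verifying EG details that the paper sidesteps.
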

\noindent
\emph{Proof of Claim~\ref{claim:June4bbb}:}  Straightforwardly from \emph{Kohnert's rule} \cite{Kohnert},
\begin{equation}
\label{eqn:Junexxx}
\kappa_{\alpha}=\prod_{i=1}^f x_i^3 \times \kappa_{0^a,2,1}(x_{f+1},x_{f+2},\ldots,x_{f+(a+2)}).
\end{equation}
Suppose $c_{\lambda^1,\lambda^2,\lambda^3,\lambda^4,\ldots}$ is the coefficient of $s_{\lambda^1,\lambda^2,\lambda^3,
\lambda^4,\ldots}$ in the $D$-expansion (\ref{eqn:June12abc}) of $\kappa_{0^a,2,1}$. 
Let $c_{(3),(3),\ldots,(3),\lambda^1,\lambda^2,\lambda^3,\lambda^4,\ldots}$ be the $D'$-split-expansion of $\kappa_{\alpha}$
(here there are $f$-many $(3)$'s).
Then (\ref{eqn:Junexxx}) implies
\[c_{(3),(3),\ldots,(3),\lambda^1,\lambda^2,\lambda^3,\lambda^4,\ldots}=c_{\lambda^1,\lambda^2,\lambda^3,\lambda^4,\ldots}.\] Now apply Claim~\ref{claim:June4abc}.\qed

Since $\text{code}(w)=(0^f,a^b,0^g)$ where $a\geq 3$ and $b\geq 2$, 
\[w^{-1}=1 \ \ 2  \ \ 3 \cdots f \ \ f+b+1 \ \ f+b+2 \ \ \cdots \ \ f+b+a \ \ f+1\  \ f+2 \ \ \cdots\  \ f+b\ \  \cdots,\]
where the rightmost ``$\cdots$'' contains the remaining numbers from $[n]$ listed in increasing order.
Let
\[\lambda = \underbrace{3,3,\ldots,3}_{\text{$f$-many}},2,1,\underbrace{0,0,\ldots,0}_{\text{$(n-f-2)$-many}}.\]
Thus 
\[w\lambda:=(\lambda_{w^{-1}(1)},\ldots,\lambda_{w^{-1}(n)})=(3^f,0^a,2,1,0^{n-f-a-2}).\]
Set 
$D''=D'\cup\{f+(a+3),f+(a+4),f+(a+5),\ldots\}$. 
Hence it follows from Claim~\ref{claim:June4bbb} that $\kappa_{w\lambda}$ is not $D''$-multiplicity-free. 
By Lemma~\ref{lemma:bigrassfact}, $J(w)=\{f+a\}$, and hence $[n-1]-J(w)=D''$; therefore, $X_w$ is not $L_{J(w)}$-spherical, by Theorem~\ref{thm:fundamentalRelationship}(II).\qed


\section*{Acknowledgements}
We thank Mahir Can, Laura Escobar, Martha Precup, Edward Richmond, and John Shareshian for helpful discussions. We thank David Brewster, Jiasheng Hu, and Husnain Raza for writing useful computer code (in the
NSF RTG funded ICLUE program). We are grateful to the anonymous referee for their detailed comments which significantly improved the final presentation.
We used the Maple packages ACE and Coxeter/Weyl in our investigations.
AY was partially supported by a Simons Collaboration Grant, and an NSF RTG grant.
RH was partially supported by an AMS-Simons Travel Grant.


\begin{thebibliography}{9999999}

\bibitem[AS18]{Assaf.Searles} S.~Assaf and D.~Searles, \emph{Kohnert tableaux and a lifting of quasi-Schur functions.} J. Combin. Theory Ser. A 156 (2018), 85--118.


\bibitem[AP14]{AP14} R.~S.~Avdeev and A.~V.~Petukhov,  \emph{Spherical actions on flag varieties}. (Russian) Mat. Sb. 205 (2014), no. 9, 3-48; translation in Sb. Math. 205 (2014), no. 9-10, 1223-1263.

\bibitem[B02]{Bessenrodt} C.~Bessenrodt, \emph{On multiplicity-free products of Schur $P$-functions.} 
Ann. Comb. 6 (2002), no. 2, 119--124.

\bibitem[BB17]{Bessenrodt.Bowman} C.~Bessenrodt and C.~Bowman, \emph{Multiplicity-free Kronecker products of characters of the symmetric groups.} Adv. Math. 322 (2017), 473--529.

\bibitem[BvW13]{Bessenrodt.Willigenburg} C.~Bessenrodt and S.~van Willigenburg, 
\emph{Multiplicity free Schur, skew Schur, and quasisymmetric Schur functions.} Ann. Comb. 17 (2013), no. 2, 275--294.

\bibitem[BL00]{BL00} S.~Billey and V.~Lakshmibai.
\emph{Singular loci of {S}chubert varieties}, volume 182 of {\em Progress in Mathematics}. Birkh\"auser Boston, Inc., Boston, MA, 2000.

\bibitem[BP14]{BP14} S.~Billey and B.~Pawlowski. \emph{Permutation patterns, Stanley symmetric functions, and generalized Specht modules.} J. Combin. Theory Ser. A 127 (2014), 85--120.

\bibitem[BB05]{Bjorner.Brenti} A.~Bj\"{o}rner and F.~Brenti, 
\emph{Combinatorics of Coxeter groups.} Graduate Texts in Mathematics, 231. Springer, New York, 2005. xiv+363 pp. 

\bibitem[B02]{B02} N.~Bourbaki, \emph{Lie groups and Lie algebras. Chapters 4–6.} Elements of Mathematics (Berlin). Springer-Verlag, Berlin, 2002. xii+300 pp.

\bibitem[B05]{Brion.Lectures} M.~Brion. Lectures on the geometry of flag varieties. \emph{Topics in cohomological studies of algebraic varieties}, Trends Math., pages 33-85. Birkhauser, Basel, 2005.

\bibitem[BHY20]{Brewster.Hodges.Yong} D.~Brewster, R.~Hodges, and A.~Yong, \emph{Proper
permutations, Schubert geometry, and randomness}, J.~Comb., to appear, 2021. \textsf{arXiv:2012.09749}

\bibitem[BLV86]{Brion.Luna.Vust}
M.~Brion, D.~Luna, and T.~Vust, \emph{Espaces homog\`enes sph\'eriques.} Invent. Math. 84 (1986), no. 3, 617--632.

\bibitem[BKTY04]{BKTY} A.~S.~Buch, A.~Kresch, H.~Tamvakis, and A.~Yong, \emph{Schubert polynomials and quiver formulas.} Duke Math. J. 122 (2004), no. 1, 125--143. 

\bibitem[BKTY05]{BKTY2} \bysame, \emph{Grothendieck polynomials and quiver formulas.} Amer. J. Math. 127 (2005), no. 3, 551--567.

\bibitem[CH20]{Can.Hodges:2020} M.~Can and R.~Hodges, \emph{Sphericality and Smoothness of Schubert Varieties},
\text{arXiv:1803.05515v5}.

\bibitem[DL81]{DL} C.~De Concini and V.~Lakshmibai, \emph{Arithmetic Cohen-Macaulayness and arithmetic normality for Schubert varieties}, Amer. J. Math. 103 (1981), no. 5, 835--850.

\bibitem[D74]{D74} M.~Demazure. Une nouvelle formule des caract\'eres. Bull. Sci. Math. (2), 98(3):163-172, 1974.

\bibitem[DMR07]{ReinerDing} M.~Develin, J.~L.~Martin, and V.~Reiner, \emph{Classification of Ding's Schubert varieties: finer rook equivalence.} Canad. J. Math. 59 (2007), no. 1, 36--62.

\bibitem[D97]{Ding} K.~Ding, \emph{Rook placements and cellular decomposition of partition varieties.}
Discrete Math. 170 (1997), no. 1-3, 107--151. 

\bibitem[F11]{Fan.MO}
K.~Fan, Occurrences of a simple reflection in the longest element of a Weyl group?, URL (version: 2011-03-21): https://mathoverflow.net/q/59037

\bibitem[FMS19]{FMSD} A.~{Fink}, K.~{M{\'e}sz{\'a}ros}, and A.~{St. Dizier}.
\newblock {\emph{Zero-one Schubert polynomials,}}
Math. Z., 2020. 


\bibitem[F97]{Fulton} W.~Fulton, \emph{Young tableaux. With applications to representation theory and geometry.} London Mathematical Society Student Texts, 35. Cambridge University Press, Cambridge, 1997. 

\bibitem[G21]{Gaetz} C.~Gaetz, \emph{Spherical Schubert varieties and pattern avoidance}, preprint, 2021.
\textsf{arXiv:2104.03264}

\bibitem[GHY21]{Gao.Hodges.Yong} Y.~Gao, R.~Hodges, and A.~Yong, \emph{Classification of Levi-spherical
Schubert varieties}, preprint, 2021. \textsf{arXiv:2104.10101}

\bibitem[G10]{Gutschwager} C.~Gutschwager, \emph{On multiplicity-free skew characters and the Schubert calculus.} 
Ann. Comb. 14 (2010), no. 3, 339--353.

\bibitem[HL18]{Hodges.Lakshmibai}
R.~Hodges and V.~Lakshmibai, \emph{Levi subgroup actions on Schubert varieties, induced decompositions of their coordinate rings, and sphericity consequences},
 Algebr. Represent. Theory 21 (2018), no. 6, 1219--1249.

\bibitem[HL18]{Hodges.Lakshmibai:II} \bysame, \emph{A classification of spherical Schubert varieties in the Grassmannian}, preprint, 2018. \textsf{arXiv:1809.08003}

\bibitem[HY20]{Hodges.Yong} R.~Hodges and A.~Yong, \emph{Multiplicity-free key polynomials}, preprint, 2020.

\bibitem[H90]{H90} J.~E.~Humphreys, \emph{Reflection groups and Coxeter groups.} Cambridge Studies in Advanced Mathematics, 29. Cambridge University Press, Cambridge, 1990. {\rm xii}+204 pp.

\bibitem[I03]{Ion} B.~Ion, \emph{Nonsymmetric Macdonald polynomials and Demazure characters}, Duke Math. J. 116 (2003), no. 2, 299--318.

\bibitem[J85]{Jos.Normal} A.~Joseph. \emph{On the Demazure character formula} Ann. Sci. Ecole Norm. Sup. (4) 18 (1985), no. 3, 389-419.

\bibitem[K13]{Karupp} P.~Karuppuchamy, \emph{On Schubert varieties.} Comm. Algebra 41 (2013), no. 4, 1365--1368. 

\bibitem[K90]{Kohnert} A.~Kohnert, \emph{Weintrauben, Polynome, Tableaux},
Bayreuth Math. Schrift. {\bf 38}(1990), 1--97.

\bibitem[LS90]{Lak} V.~Lakshmibai and B.~Sandhya, \emph{Criterion for smoothness of Schubert varieties in ${\rm Sl}(n)/B$.} Proc. Indian Acad. Sci. Math. Sci. 100 (1990), no. 1, 45--52.

\bibitem[L13]{Lascoux:polynomials} A.~Lascoux, \emph{Polynomials}, 2013. \url{http://www-igm.univ-mlv.fr/~al/ARTICLES/CoursYGKM.pdf}

\bibitem[LS96]{LS:treillis} A.~Lascoux and M.-P.~Sch\"utzenberger, \emph{Treillis et bases des groupes de Coxeter.} Electron. J. Combin. 3 (1996), no. 2, Research paper 27, approx. 35 pp.

\bibitem[L01]{Luna}
D.~Luna, \emph{Vari\'et\'es sph\'eriques de type $A$.} Publ. Math. Inst. Hautes Etudes Sci. No. 94 (2001), 161--226.

\bibitem[MWZ99]{MWZ} P.~Magyar, J.~Weyman, and A.~Zelevinsky, 
\emph{Multiple flag varieties of finite type.} Adv. Math. 141 (1999), no. 1, 97--118.

\bibitem[M01]{Manivel}
 L. Manivel, \emph{Symmetric functions, Schubert polynomials and degeneracy loci}. Translated from the
1998 French original by John R. Swallow. SMF/AMS Texts and Monographs, American Mathematical
Society, Providence, 2001.

\bibitem[M09]{Mason}
S.~Mason, \emph{An explicit construction of type A Demazure atoms}, J. Algebraic Combin. 29 (2009), no. 3, 295--313. 

\bibitem[PS19]{Pechenik.Searles} O.~Pechenik and D.~Searles, \emph{Asymmetric
function theory}, preprint, 2019. \textsf{arXiv:1904.01358}.


\bibitem[P14]{Perrin} N.~Perrin, \emph{On the geometry of spherical varieties.}Transform. Groups 19 (2014), no. 1, 171--223. 


\bibitem[RR85]{RR85} S.~Ramanan and A.~Ramanathan, \emph{Projective normality of flag varieties and Schubert varieties.} Invent. Math. 79 (1985), no. 2, 217--224. 

\bibitem[RS95]{Reiner.Shimozono} V.~Reiner and M.~Shimozono,
\emph{Key polynomials and a flagged Littlewood-Richardson rule},
J.~Comb.~Theory.~Ser.~A.,
{\bf 70}(1995), 107--143.

\bibitem[RS16]{Richmond.Slofstra} E.~Richmond, W.~Slofstra. \emph{Billey-Postnikov decompositions and the fibre bundle structure of Schubert varieties.} Math. Ann. 366 (2016), no. 1-2, 31--55.

\bibitem[RY15]{Ross.Yong}
C.~Ross and A.~Yong, \emph{Combinatorial rules for three bases of polynomials}, 
S\'{e}m. Lothar. Combin. 74 ([2015-2018]), Art. B74a, 11 pp.
\bibitem[S99]{ECII} R.~P.~Stanley, Enumerative combinatorics. Vol. 2. With a foreword by Gian-Carlo Rota and appendix 1 by Sergey Fomin. Cambridge Studies in Advanced Mathematics, 62. Cambridge University Press, Cambridge, 1999. xii+581 pp.

\bibitem[S01]{STEM} J.~R.~Stembridge, \emph{Multiplicity-free products of Schur functions.}
Ann. Comb. 5 (2001), no. 2, 113-121.

\bibitem[S03]{Stem:Weyl} \bysame, \emph{Multiplicity-free products and restrictions of Weyl characters.} Represent. Theory 7 (2003), 404--439. 

\bibitem[T07]{Tenner} B.~E.~Tenner, \emph{Pattern avoidance and the Bruhat order.}
J. Combin. Theory Ser. A 114 (2007), no. 5, 888--905.

\bibitem[TY10]{TY03} H.~Thomas and A.~Yong, \emph{Multiplicity-free Schubert calculus.} Canad. Math. Bull. 53 (2010), no. 1, 171--186.
\end{thebibliography}
\end{document}